\theoremstyle{plain}
\newtheorem{thm}{\bf Theorem}[section]
\newtheorem{prop}[thm]{\bf Proposition}
\newtheorem{lem}[thm]{\bf Lemma}
\newtheorem{cor}[thm]{\bf Corollary}
\theoremstyle{definition}
\newtheorem{dfn}[thm]{\bf Definition}
\theoremstyle{remark}
\newtheorem{rem}[thm]{\bf Remark}
\DeclareMathOperator{\Hom}{Hom}
\def \A{\mathbb{A}}
\def \G{\mathbb{G}_m}
\def \F{\mathbb{F}_n}
\def \Z{\mathbb{Z}}
\def \Q{\mathbb{Q}}
\def \C{\mathbb{C}}
\def \k{\Bbbk}
\def \P{\mathbb{P}}
\newcommand{\acknowledge}{\subsection*{Acknowledgments}}
\title[Equivariant operational Chow rings]
{Equivariant operational Chow rings of 
$\mathbf{T}$-linear schemes} 
\author[Richard P. Gonzales]{Richard P. Gonzales\,*} \thanks{* Supported by the Institut des Hautes \'{E}tudes Scientifiques, T\"{U}B\.{I}TAK Project No. 112T233, and DFG Research Grant PE2165/1-1.}
\address{Institut des Hautes \'{E}tudes Scientifiques\\ 
35 Route de Chartres,\\
F-91440 Bures-sur-Yvette\\
France}
\address{
Faculty of Engineering and Natural Sciences\\
Sabanc\i \, \"{U}niversitesi\\
Orhanli, Tuzla 34956, Istanbul\\
Turkey
\vspace{.5cm}}
\email{rgonzalesv@gmail.com}
\begin{document}

\begin{abstract} 
We study $T$-linear schemes, 
a class 
of objects that includes 
spherical and Schubert varieties. 
%We provide a 
%K\"{u}nneth formula 
%for the equivariant Chow groups 
%of these schemes. 
%Using such formula, we  
%We show that equivariant Kronecker duality 
%holds for the equivariant operational Chow rings 
%(or equivariant Chow cohomology)  
%of 
%$T$-linear schemes.  
%Building on this, we obtain: (i) a 
We provide a  
localization theorem for the equivariant Chow cohomology    
of these schemes  %complete $T$-linear schemes, 
that does not depend on 
resolution of singularities. 
Furthermore, we give  
 an explicit 
presentation 
of the equivariant Chow cohomology of 
possibly 
singular complete spherical varieties    
%in those cases where  
admitting 
a smooth equivariant envelope (e.g., group embeddings).  
\end{abstract}

\maketitle

%%--------------------Here the manuscript starts------------------------------
\section{Introduction and motivation}
%\addcontentsline{toc}{section}{Introduction}
Let $\k$ be  an 
algebraically closed field. 
Let $G$ be a connected reductive linear algebraic group  
%defined over an algebraically closed field $\k$. %of characteristic zero. 
(over $\k$).
Let $B$ be a Borel subgroup of $G$ and $T\subset B$ 
be a maximal torus of $G$. 
An algebraic variety $X$, equipped with an action of $G$, 
is {\em spherical} if it contains a dense orbit of $B$.  
(Usually spherical 
varieties are assumed to be normal but this condition is not needed here.)  
Spherical varieties have been extensively studied 
in the works of Akhiezer, Brion, %\cite{bri:sph}, \cite{bri:cd}, 
Knop, %\cite{knop:sph} 
Luna, 
Pauer, Vinberg,  
Vust and others.   
For an up-to-date discussion of 
spherical varieties, as well as a comprehensive bibliography, 
see \cite{ti:sph} and the 
references therein.  %-Vust \cite{lv:sph}. 
If $X$ is spherical, then it has a finite number of $B$-orbits, 
and thus, also a finite number of $G$-orbits \cite{ti:sph}. 
In particular, $T$ acts on $X$ with a finite number of fixed points. 
These properties make spherical varieties particularly well suited for 
applying the methods of Goresky-Kottwitz-MacPherson \cite{gkm:eqc}, nowadays called GKM theory,  
in the topological setup,  
and Brion's extension of GKM theory \cite{bri:eqchow} to the
algebraic setting of equivariant Chow 
groups, as defined by Totaro, Edidin and Graham \cite{eg:eqint}. 
Through this method, substantial information about the topology and 
geometry of a spherical variety can be obtained by 
restricting one's attention to the induced action of $T$. 

\smallskip

Examples of spherical varieties include 
$G\times G$-equivariant embeddings of $G$ (e.g., toric varieties are 
spherical) and the regular symmetric varieties of 
De Concini-Procesi \cite{dp:sym}. 
The equivariant cohomology and equivariant Chow groups % \cite{eg:eqint}  
of smooth complete %projective 
spherical varieties have been studied 
by Bifet, De Concini and Procesi \cite{bif:reg}, 
De Concini-Littelmann \cite{lp:equiv}, 
Brion \cite{bri:eqchow} and Brion-Joshua \cite{bj:chern}. 
In these cases, there is a  
comparison result relating equivariant cohomology
with equivariant Chow groups: for a smooth complete %projective %%% Give a proof?
spherical variety, the equivariant cycle map yields 
an isomorphism from the equivariant Chow group to the 
equivariant (integral) cohomology (Proposition \ref{equiv.cycle.smooth.complete}).    
As for the study of the equivariant Chow groups of 
possibly singular spherical varieties, 
some progress has been made 
by %Danilov \cite{da:tor}, 
%
%Brion \cite{bri:eqchow}, 
Payne \cite{p:t}  
and the author \cite{go:equiv}-\cite{go:rm}. 
 
\smallskip

The problem of developing %(equivariant) 
intersection theory on singular varieties %schemes 
comes 
from the fact that the %equivariant 
Chow groups $A_*(-)$ do not 
admit, in general, a natural ring structure or 
intersection product. But when singularities 
are mild, for instance when $X$ is a quotient of a 
smooth variety $Y$ by a finite group $F$, 
then $A_*(X)\otimes\Q\simeq (A_*(Y)\otimes \Q)^F$, and so
$A_*(X)\otimes\Q$ inherits the ring structure of $A_*(Y)\otimes\Q$. 
To simplify notation, 
if $A$ is a $\Z$-module, %or an abelian group,   
we shall write hereafter %, throughout the paper, 
$A_\Q$ for the rational vector space $A\otimes\Q$.  
%(the tensor product is understood to be taken 
%over $\Z$).  

%This happens, for instance, in the case of simplicial toric varieties.  
%
%Throughout this paper, 
%we consider {\em rational} Chow groups 
%$A_*(X)\otimes \Q$, so we drop $\Q$ from the notation in the sequel. 

%%%%%%%%%%%%%%%%%%%%%%%%%%%%%%%%                                                                                %%%%%%%%%%%%%%%
%%%%%%%%%%%%%%%%%%%%%%%%%%%%%%%% Be careful distinguishing which results are valid with \Z rather than \Q coeff %%%%%%%%%%%%%%%
%%%%%%%%%%%%%%%%%%%%%%%%%%%%%%%%%                                                                               %%%%%%%%%%%%%%%

\smallskip

In order to study more general singular schemes, %varieties, 
Fulton and MacPherson \cite{f:int} 
introduced the notion of operational Chow groups or Chow cohomology. 
Similarly, Edidin and 
Graham defined the {\em equivariant operational Chow groups} \cite{eg:eqint}, 
which we briefly recall. (For our conventions on varieties and schemes, 
%as well as our underlying category, 
see Section 2.1.)   
Let $X$ be a $T$-scheme. 
The $i$-th $T$-equivariant operational Chow group of $X$, 
denoted $A^i_T(X)$, is defined as follows.  
An element $c\in A^i_T(X)$ is a collection 
of homomorphisms $c^{(m)}_f:A^T_m(Y)\to A^T_{m-i}(Y)$, 
written $z\mapsto f^*c\cap z$, 
for every $T$-equivariant map $f:Y\to X$ 
and all integers $m$ 
(the underlying category is the category of $T$-schemes).  
Here
$A^T_*(Y)$ denotes the %(rational) 
equivariant Chow group of $Y$ (Section 2.1). 
As in the case of ordinary operational Chow groups,  
%(\cite[Chapter 17]{f:int}), 
these homomorphisms 
%should be
%compatible with the operations on 
must satisfy three conditions of compatibility: 
with proper pushforward (resp. flat pull-back, 
resp. intersection with a Cartier divisor) 
for $T$-equivariant maps $Y'\to Y\to X$, 
with $Y'\to Y$ proper (resp. flat, resp. determined 
by intersection with a Cartier divisor); 
see \cite[Chapter 17]{f:int} for precise statements. 
The homomorphism $c^{(m)}_f$ determined by an 
element $c\in A^i_T(X)$ is 
usually denoted simply by $c$, with an indication of where it acts.  
%equivariant Chow groups (pull-back for l.c.i. morphisms, 
%proper push-forward, etc.). 
For any $X$, the ring structure on 
$A^*_T(X):=\oplus_i A^i_T(X)$ is given by composition of 
such homomorphisms. The ring $A^*_T(X)$ is graded, 
and $A^i_T(X)$ can be non-zero for any $i\geq 0$.
The most salient functorial properties of equivariant 
operational Chow groups are summarized below:
\begin{enumerate}[(i)]
 \item Cup products $A^p_T(X)\otimes A^q_T(X)\to A^{p+q}_T(X)$, $a\otimes b\mapsto a\cup b$, 
making $A^*_T(X)$ into a graded associative ring (commutative when 
resolution of singularities is known).

 \item Contravariant graded ring maps 
$f^*:A^i_T(X)\to A^i_T(Y)$ for arbitrary equivariant 
morphisms $f:Y\to X$. 

 \item Cap products $A^i_T(X)\otimes A^T_m(X)\to A^T_{m-i}(X)$, 
$c\otimes z\mapsto c\cap z$, making 
$A^T_*(X)$ into an $A^*_T(X)$-module 
and satisfying the projection formula. 

\item If $X$ is a nonsingular $n$-dimensional $T$-variety, then  
the Poincar\'e duality 
map from $A^i_T(X)$ to $A^T_{n-i}(X)$, 
taking $c$ to $c\cap [X]$, is an isomorphism, and the ring 
structure on $A^*_T(X)$ is that determined by 
intersection products of cycles on the mixed spaces $X_T$ 
\cite[Proposition 4]{eg:eqint}. 

\item Equivariant vector bundles on $X$ have 
equivariant Chern classes in $A^*_T(X)$.    

\item Localization theorems of Borel-Atiyah-Segal type 
and GKM theory (with rational coefficients) for possibly singular 
complete $T$-varieties in characteristic zero. See \cite{go:opk} or the Appendix for details.  
\end{enumerate}

\smallskip

In \cite{f:sph}, Fulton, MacPherson, Sottile and Sturmfels succeed in
describing the non-equivariant operational Chow groups of 
complete spherical varieties. Indeed, they show that the 
Kronecker duality homomorphism 
$$
\mathcal{K}:A^i(X)\longrightarrow {\rm Hom}(A_i(X),\Z), \hspace{1cm} \alpha\mapsto (\beta\mapsto \deg(\beta\cap \alpha))
$$ 
is an isomorphism for complete spherical varieties. Here 
${\rm deg\,}(-)$ is the degree homomorphism $A_0(X)\to \Z$. 
Moreover, they prove that 
$A_*(X)$ is finitely generated by the classes of
$B$-orbit closures, and with the aid of the map 
$\mathcal{K}$, they provide a combinatorial description 
of $A^*(X)$ and the structure constants of the cap and cup products \cite{f:sph}. 
In addition, if $X$ is nonsingular and complete, 
they show that the cycle map $cl_X:A_*(X)\to H_*(X)$ is an isomorphism. 
Although we stated their results in the case of spherical varieties, these hold  
more generally for complete schemes with a finite number of orbits 
of a solvable group. In particular, the conclusions of \cite{f:sph} 
hold for Schubert varieties. 
Later on, Totaro \cite{to:linear} extended these results 
to 
the broader class of linear schemes, a class first 
studied in work of Jannsen \cite{jann:kth}. 
The results of \cite{f:sph} and \cite{to:linear} are quite marvelous in that 
they give a presentation of a rather abstract ring, namely $A^*(X)$, 
in a very combinatorial manner. 
%This is the  motivated the research in this article. 
%There is also an equivariant notion of $T$-linear scheme. 

\medskip

In this article, we extend the results of the previous paragraph      
to the equivariant Chow cohomology of  
$T$-linear schemes (Definition \ref{linear.dfn}).  
%a natural class of schemes with torus actions that generalizes linear schemes.     
%the natural 
%the equivariant analogues of linear schemes.    
%Briefly, a $T$-linear scheme is a $T$-scheme %with a $T$-action 
%that can be obtained by an 
%inductive procedure starting with a finite dimensional $T$-representation, 
%in such a way that the complement of a $T$-linear scheme equivariantly 
%embedded in affine space is also a $T$-linear scheme, and any $T$-scheme which
%can be stratified as a finite disjoint union of $T$-linear schemes  
%is a $T$-linear scheme. 
%See Section 2.3 for precise definitions.   
%$T$-linear schemes have been studied by Joshua-Krishna \cite{jk:chow}. 
By Theorem \ref{spherical_are_tlinear.thm}, 
%this   
%this class of $T$-schemes includes  
spherical varieties are $T$-linear (this fact does not follow directly 
from \cite{to:linear} and \cite{ro:sph},   
see the comments before Theorem \ref{spherical_are_tlinear.thm}).  
%In Section 3.1 we provide the equivariant versions of the results of \cite{f:sph} and \cite{to:linear}. 
%Building on this, 
%The equivariant versions of the results of \cite{f:sph} and \cite{to:linear} 
%are obtained in Subsection 3.1. 
%Building on our equivariant versions of the results of \cite{f:sph} 
%and \cite{to:linear}, 
%Our main results are: (i) a 
%Our main results are: (i) a 
Also, we obtain a 
localization theorem for the equivariant Chow cohomology 
of complete $T$-linear schemes that does not depend on resolution of 
singularities (Theorem \ref{eqloc.kro.thm}). 
Last, and most important, 
we give a presentation of the rational equivariant Chow cohomology 
of complete possibly singular 
spherical varieties admitting an equivariant smooth envelope 
(Theorem \ref{opA_spherical.thm}). %Remarkably,  
The latter vastly increases  the applicability 
of Brion's techniques     
\cite[Section 7]{bri:eqchow} 
from the smooth to the singular setup.

\smallskip

%This article is organized as follows. 
Here is an outline of the paper. 
Section 2 
reviews the necessary background material. %results from \cite{bri:eqchow}, \cite{f:sph} and \cite{to:linear}  
%needed in our study. 
Section 3 is the conceptual core of this article. 
In Subsection 3.1 we obtain the 
equivariant versions of 
the results of \cite{f:sph} and \cite{to:linear} that concern us. 
We start by defining {\em equivariant Kronecker duality schemes}. 
These are complete $T$-schemes $X$ which satisfy 
two conditions: (i) $A^T_*(X)$ is finitely generated over  
$S=A^*_T(pt)$, %the equivariant Chow ring of a point (denoted $S$),  
and (ii) the equivariant Kronecker duality map 
$\mathcal{K}_T:A^*_T(X)\longrightarrow \Hom_{S}(A_*^T(X),S)$    
%\hspace{1cm} \alpha\mapsto (\beta\mapsto p_{X*}{(\beta\cap \alpha)})$$
is an isomorphism of $S$-modules (Definition \ref{kro.dfn}). %See Section 3.1 for details. 
%Here $p_{X*}:A^T_*(X)\to S$ is the 
%map induced by pushforward to a point. Also, $S$ is isomorphic to $A^T_*(pt)$ with the opposite grading.   
%We refer to Section 3.1 for details. 
As an example, 
we show that complete 
$T$-linear schemes satisfy equivariant Kronecker duality (Proposition \ref{tlinear.kro.prop}). 
This  is 
%done by first   
%showing that $T$-linear schemes 
%satisfy 
deduced from the equivariant K\"{u}nneth formula (Proposition \ref{kunneth.prop}).  
%%a fact that follows almost immediately from the non-equivariant case. 
%Furthermore, we 
%describe the cap and cup product structures 
%(Corollaries \ref{action.op.chow.on.homology.cor} and \ref{op.cup.prod}). 
%These formulas are equivariant analogues of the ones in 
%\cite[Theorem 4]{f:sph}. 
In Subsection 3.2 we prove our second main result, namely, a localization theorem 
for equivariant Kronecker duality schemes (Theorem \ref{eqloc.kro.thm}). 
%Notably, it holds with $\Z$-coefficients. 
%This contrasts with the fact that, for more general possibly singular $T$-schemes, 
%such result only holds with $\Q$-coefficients (cf. Appendix).    
We conclude Section 3 by showing that projective group embeddings in 
arbitrary characteristic satisfy equivariant localization (Theorem \ref{Tlinear.envelopes.thm}). 
This extends well-known results on torus embeddings \cite{p:t} 
to more general compactifications of connected reductive groups.  
Finally, in Section 4, 
we apply the machinery just developed 
to spherical varieties in characteristic zero  
and prove the most important result of this paper,  
namely,    
Theorem \ref{opA_spherical.thm}. It asserts that 
if $X$ is a complete, possibly singular, 
$G$-spherical variety, then 
%our findings describe  
the image of the injective map 
$i^*_T:A^*_T(X)_\Q\to A^*_T(X^T)_\Q$ 
is fully described by congruences involving pairs, 
triples or quadruples of $T$-fixed 
points. %(Theorem \ref{opA_spherical.thm}). 
Remarkably, this extends \cite[Theorem 7.3]{bri:eqchow} 
to the singular setting. 

\smallskip

\acknowledge{ 
Most of the research in this paper was done 
during my first visit to the Institute
des Hautes \'Etudes Scientifiques (IHES). 
I am deeply grateful to IHES for its support,  
outstanding hospitality and excellent atmosphere. 
A very special thank you goes to Michel Brion 
for 
%the productive 
%meeting we had at IHES, from which this paper received much inspiration.  
carefully reading previous drafts of this article, 
and for offering detailed comments and suggestions on various parts.   
I would also like to thank Sabanc\i\,\"{U}niversitesi, 
the Scientific and Technological Research Council of Turkey (T\"{U}B\.ITAK), 
and the German Research Foundation (DFG) for their support during the 
final stages of completing this article. 
I am further grateful to the referee for very helpful comments and 
suggestions that improved the presentation and scope of the article. 
}

%%\section{Preliminaries}
\section{Definitions and basic properties}

%\convnotation{
\subsection{Conventions and notation} 
Throughout this paper, we fix an algebraically closed 
field $\k$ (of arbitrary characteristic, unless stated otherwise). 
All schemes and algebraic groups are assumed to be defined 
over $\k$. By a scheme we mean a separated scheme 
of finite type over $\k$. 
A variety is a reduced and irreducible scheme. 
%Observe that varieties need not be irreducible. 
A subvariety is a closed subscheme which is a variety. 
A point on a scheme will always be a closed point. 
The additive and multiplicative groups over $\k$
are denoted by $\mathbb{G}_a$ and $\G$. 

\smallskip

We denote by $T$ an algebraic torus. 
We write $\Delta$ for the character group of $T$, and  
$S$ for the symmetric algebra over $\Z$ of the abelian group 
$\Delta$. We denote by $\mathcal{Q}$ the quotient field of $S$. 
%
%\smallskip
%
A scheme $X$ provided with an algebraic action of $T$ is called a 
{\em $T$-scheme}. For a $T$-scheme $X$,  
we denote by $X^T$ the fixed point subscheme and by $i_T:X^T\to X$
the natural inclusion. %as a closed subscheme.  
If $H$ is a closed subgroup of $T$, we similarly denote by 
$i_H:X^H\to X$ the inclusion of the fixed point subscheme. 
When comparing $X^T$ and $X^H$ we write $i_{T,H}:X^T\to X^H$ 
for the natural ($T$-equivariant) inclusion. 

\medskip

A $T$-scheme $X$ is called {\em locally 
linearizable} (and the $T$-action is called {\em locally linear}) 
if $X$ is covered by invariant 
%quasi-projective open subsets 
%(and hence by invariant 
affine open subsets.  
%in the case of torus actions) 
%upon which the action is linear.    
For instance, $T$-stable subschemes of normal 
$T$-schemes are locally linearizable \cite{su:eq}.
A $T$-scheme is called {\em $T$-quasiprojective} if it 
has an ample $T$-linearized invertible sheaf. 
This assumption is satisfied, e.g. for $T$-stable subschemes   
of normal quasiprojective $T$-schemes \cite{su:eq}. 
%Similar notions are given for actions of more general (connected) linear  
%algebraic groups, see \cite{su:eq} for many details. 
Recall that an {\em envelope}  
$p:\tilde{X}\to X$ is a proper map such that for any 
subvariety $W\subset X$ there is a subvariety $\tilde{W}$
mapping birationally to $W$ via $p$ \cite[Definition 18.3]{f:int}.
In the case of $T$-actions, %(or more generally $G$-actions), 
we say that $p:\tilde{X}\to X$ is an {\em equivariant envelope} 
if $p$ is $T$-equivariant, and if we can take $\tilde{W}$ to be 
$T$-invariant for $T$-invariant $W$. If there is a dense open set  
$U\subset X$ over which $p$ is an isomorphism, then we say that 
$p:\tilde{X}\to X$ is a {\em birational} envelope. 
%These properties are compatible with Totaro's algebraic approximation
%to the Borel construction 
%For details see 
%\cite[Section 2.6]{eg:eqint}. 
%Similar notions exist for actions 
%of more general reductive groups $G$.  
By \cite[Theorem 2]{su:eq},    
%\begin{lem}\label{smooth_envelope.lem}
%Let $G$ be a connected linear algebraic group. 
if $X$ is a $T$-scheme, then there exists a 
$T$-equivariant birational envelope $p:\tilde{X}\to X$, 
where $\tilde{X}$ is a $T$-quasiprojective scheme. 
Moreover, if ${\rm char}(\k)=0$, then we may choose $\tilde{X}$ 
to be smooth \cite[Proposition 7.5]{eg:cycles}. 
%This holds more generally for actions 
%of linear algebraic groups, 
%see \cite[Theorem 2]{su:eq} and \cite[Proposition 7.5]{eg:cycles}.
If $p:\tilde{X}\to X$ is a $T$-equivariant envelope, and $H\subset T$ is a closed subgroup, 
then  the induced map 
$\tilde{X}^H\to X^H$ 
is a $T$-equivariant envelope \cite[Lemma 7.2]{eg:cycles}.

\medskip 

Let $X$ be a $T$-scheme of dimension $n$   
(not necessarily equidimensional). %, and let $d=\dim T$.    
Let $V$ be a finite dimensional $T$-module, %of dimension $l$, 
and let $U\subset V$ be an invariant open 
subset such that a principal bundle quotient 
$U\to U/T$ exists. %, and $V\setminus U$ has 
%codimension larger than $n-i$.  
Then $T$ acts freely on $X\times U$ 
%via the 
%diagonal action 
and the quotient scheme 
$X_T:=(X\times U)/T$ exists. 
Following Edidin and Graham \cite{eg:eqint}, 
we define the 
$i$-th equivariant Chow group  
$A_i^T(X)$ by $A_i^T(X):=A_{i+\dim U-\dim T}(X),$
if $V\setminus U$ has codimension more than 
$n-i$. Such 
pairs $(V,U)$ always exist, and  
the definition is independent of the choice of $(V,U)$, see       
\cite{eg:eqint}. Finally, $A^T_*(X):=\oplus_i A^T_i(X)$. 
Unlike ordinary Chow groups, 
$A^G_i(X)$ can be non-zero for any $i\leq n$, including negative $i$. 
If $X$ is a $T$-scheme, 
and $Y\subset X$ is a $T$-stable closed subscheme, 
then %the structure sheaf of 
$Y$ defines a class $[Y]$ 
in $A^T_*(X)$. If $X$ is smooth and equidimensional, then 
so is $X_T$, 
and $A^T_*(X)$ admits an intersection pairing;  
in this case, %we denote by $A^*_T(X)$  
the corresponding ring 
graded by codimension is isomorphic to 
the equivariant operational Chow ring $A^*_T(X)$ %, by Poincar\'e duality  
\cite[Proposition 4]{eg:eqint}. 
%the equivariant operational Chow group of $X$ 
The %(rational) 
equivariant Chow ring of a point $A^*_T(pt)$ identifies to $S$, 
and   
$A^T_*(X)$ is a $S$-module, where $\Delta$ acts on $A^T_*(X)$ 
by homogeneous maps of degree $-1$.   
This module structure 
is induced by pullback through the 
flat map $p_{X,T}:X_T\to U/T$.   
Restriction to a fiber of $p_{X,T}$ 
gives a canonical map 
$A^T_*(X)\to A_*(X)$, and this map is surjective (Theorem \ref{Tequiv.thm}). 
If $X$ is complete, we denote by 
$p_{X,T*}(\alpha)$ (or simply $p_{X*}(\alpha)$) 
the proper pushforward to a point of a class $\alpha \in A^T_*(X)$. 
%To simplify notation, 
We may also write $\int_X(\alpha)$ 
or $\deg{(\alpha)}$ for this pushforward.
%
%
%$\int_X(\alpha)$ 
%the proper pushforward to a point of a 
%class $\alpha \in A^T_*(X)$. 
%We usually write $A^T_*$ for the {\em negatively graded} $S$-module $A^T_*(pt)$.
Note that $A^*_T(pt)$  
is isomorphic to $A^T_*(pt)$ with the opposite grading.

\medskip

Let $X$ be a $T$-scheme. For any mixed space $X_T$ 
we construct a map $r:A^i_T(X)\to A^i(X_T)$.  
Let $c\in A^*_T(X)$. For a map $Y\to X_T$ and $\alpha \in A_*(Y)$, 
we define $r(c)\cap \alpha$ as follows. Let $Y_U\to Y$ be the pullback of the 
principal $T$-bundle $X\times U\to X_T$. Since $Y_U\to Y$ is a principal bundle, 
we identify $A_*(Y)$ with $A^T_*(Y_U)$. Let $\alpha_U\in A^T_*(Y_U)$ correspond to 
$\alpha\in A_*(Y)$. Now simply define $r(c)\cap \alpha$ to be the class 
corresponding to 
$c\cap \alpha_U$. See \cite[pages 620-621]{eg:eqint} for more information on the  
functorial properties of the map $r$.    
On the other hand, we also have a map $\rho:A^i(X_T)\to A^i_T(X)$. 
Indeed, let $c\in A^i(X_T)$, $Y\to X$ a $T$-equivariant map, and 
$\beta \in A^T_*(Y)$. For any representation, there are maps 
$Y_T\to X_T$. The class $\beta$ is represented by a class $\beta_U\in A_{*+\dim U-\dim T}(Y_T)$ 
for some mixed space $Y\times U/T$. Define 
$\rho(c)\cap \beta=c\cap \beta_U$. This  
is an element of $ A_{*+\dim U-\dim T-i}(Y_T)\simeq A^T_{*-i}(Y)$.  
Note that if $X$ has a $T$-equivariant smooth envelope 
(e.g. $X$ is a group embedding or ${\rm char}(\k)=0$),   
and $V\setminus U$ 
has codimension more than $i$, 
then $\rho$ and $r$ are inverse functions; so in this case we get  
$A^i_T(X)\simeq A^i(X_T)$ \cite[Theorem 2]{eg:eqint}. 

\medskip 

Finally, for any $T$-scheme $X$,  
%there is a natural map 
%$\iota^*:{\rm op}A^*_T(X)\to {\rm op}A^*(X)$. 
%Indeed, 
restriction to a fiber of 
$p_{X,T}:X_T\to U/T$   
induces a canonical map $A^*(X_T)\to A^*(X)$. 
%One defines $\iota=i^*\circ r$. 
%which coincides with $\iota^*$. 
Precomposing this map with $r:A^*_T(X)\to A^*(X_T)$ 
gives a natural map $\iota^*:A^*_T(X)\to A^*(X)$. 
In general, unlike its counterpart in equivariant Chow groups,  
the map $\iota^*$ is not surjective and %in general, and 
its kernel is not necessarily 
generated in degree one, not even 
for toric varieties \cite{pk:tor}. 
This becomes an issue %problems 
when translating results from 
equivariant to non-equivariant 
Chow cohomology. 
In Corollary \ref{usual.op.chow.cor} we give some  
conditions under which $\iota^*$ is surjective and 
yields an isomorphism 
$A^*_T(X)_\Q/\Delta A^*_T(X)_\Q\simeq A^*(X)_\Q$. %(see Corollary \ref{usual.op.chow.cor}). 
Such conditions are fulfilled, among others, by complete $\Q$-filtrable spherical varieties \cite{go:rm}.

\subsection{The Bia\l ynicki-Birula decomposition}
Let $X$ be a $T$-scheme. 
Let $X^T=\bigsqcup_{i=1}^m F_i$ be the decomposition of $X^T$ 
into connected components. 
A one-parameter subgroup $\lambda:\G\to T$ is called {\em generic}
if $X^{\G}=X^T$, where $\G$ acts on $X$ via $\lambda$. 
Generic one-parameter subgroups always exist 
(when $X$ is locally linearizable this certainly holds; 
the general case follows from this by considering the normalization of $X$).  
Now fix a generic one-parameter subgroup $\lambda$ of $T$. 
For each $F_i$, we define the subset %{\bf stratum} 
$$X_+(F_i,\lambda):=\{x \in X \;|\; \lim_{t\to 0}\lambda(t)\cdot x \;{\rm exists \;and\; is \;in}\; F_i\}.$$
We denote by $\pi_i:X_+(F_i,\lambda)\to F_i$ the map $x\mapsto \lim_{t\to 0}\lambda(t)\cdot x$. 
Then $X_+(F_i,\lambda)$ is a locally closed $T$-invariant subscheme of $X$, and 
$\pi_i$ is a $T$-equivariant morphism. 
The (disjoint) union of the 
%locally closed $T$-invariant subchemes 
$X_+(F_i,\lambda)$ 
%, where $\lambda$ is a fixed generic one-parameter subgroup, 
might not cover all of $X$,  
but when it does %this union encapsulates all of $X$    
(e.g. when $X$ is complete), 
the decomposition $\{X_+(F_i,\lambda)\}_{i=1}^m$ %$W_i(\lambda)$ 
is called 
the %associated 
Bia\l ynicki-Birula decomposition, or 
{\em BB-decomposition}, of $X$ associated to $\lambda$. 
Each $X_+(F_i,\lambda)$ is referred to as a {\em stratum} of the decomposition. %$X$. %(associated to $\lambda$).   
If, moreover, all fixed points 
of the given $T$-action %of $T$ 
on $X$ are isolated (i.e. $X^T$ is finite), 
the corresponding $X_+(F_i,\lambda)$ are simply called %{\bf BB-cells} or,  
{\em cells} %for short.
of the decomposition.

\begin{dfn}\label{filtable.dfn} 
Let $X$ be a $T$-scheme endowed with a BB-decomposition 
$\{X_+(F_i,\lambda)\}$, for some generic one-parameter subgroup $\lambda$ of $T$. 
The decomposition %$\{W_i(\lambda)\}$ 
is said to be {\em filtrable} if 
%$X$ is called {\bf filtrable} if, for some generic one-parameter subgroup $\lambda$ of $T$,           
%the collection $\{W_i(\lambda)\}$ is a BB-decomposition of $X$, 
%and  
there exists a finite increasing sequence 
$\Sigma_0\subset \Sigma_1\subset \ldots \subset \Sigma_m$
of $T$-invariant closed subschemes of $X$ such that:

\smallskip

\noindent a) $\Sigma_0=\emptyset$, $\Sigma_m=X$,

\smallskip

\noindent b) $\Sigma_{j}\setminus \Sigma_{j-1}$ is a stratum of 
the decomposition $\{X_+(F_i,\lambda)\}$, 
for each $j=1,\ldots, m$. %, t
\end{dfn}  
In this context, it is common to say that $X$ is {\em filtrable}.  
If, moreover, $X^T$ is finite and the cells $X_+(F_i,\lambda)$ are isomorphic to affine spaces $\A^{n_i}$,  
then $X$ is called {\em $T$-cellular}. 
%By assumption, there is a closed 
%stratum $X_1=W_{i_0}(\lambda)$ and $X\setminus X_1$ is filtrable (though it might not be projective). 
%
%\smallskip
%  
The following result is due to Bia\l ynicki-Birula (\cite{bb:torus}, \cite{bb:decomp}). 
%We include the proof here for the reader's convenience.

\begin{thm}\label{bbdecomp.thm}
Let $X$ be a complete $T$-scheme, and let $\lambda$ be a generic one-parameter subgroup. %with isolated fixed points.  
If $X$ admits an ample $T$-linearized invertible sheaf, 
then the associated BB-decomposition $\{X_+(F_i,\lambda)\}$ 
is filtrable. 
Furthermore, 
if $X$ is smooth,  
then $X^T$ is also smooth, and for any component $F_i$ of $X^T$,  %corresponding 
the map $\pi_i:X_+(F_i,\lambda)\to F_i$ 
makes $X_+(F_i,\lambda)$ into a $T$-equivariant locally trivial 
bundle in affine spaces over $F_i$. 

\hfill $\square$
\end{thm}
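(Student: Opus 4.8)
The plan is to establish the two assertions of Theorem \ref{bbdecomp.thm} separately, since the filtrability statement holds for arbitrary complete $X$ with an ample $T$-linearized sheaf, while the vector-bundle structure requires smoothness. For filtrability, the key input is the $T$-linearized ample sheaf, which lets me build an equivariant projective embedding $X\hookrightarrow \P(V)$ for a suitable $T$-module $V$. The idea is to use the generic one-parameter subgroup $\lambda$ to order the fixed-point components $F_i$ by the weight with which $\lambda$ acts on the corresponding summand of $V$ (equivalently, by the value of a $\lambda$-weight function on each $F_i$). First I would show that, after reindexing the $F_i$ so that these weights are strictly increasing, the unions $\Sigma_j := \bigcup_{i\le j} \overline{X_+(F_i,\lambda)}$ are closed and $T$-invariant, and that $\Sigma_j\setminus\Sigma_{j-1}$ is exactly the stratum $X_+(F_j,\lambda)$. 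The completeness of $X$ guarantees that every limit $\lim_{t\to 0}\lambda(t)\cdot x$ exists, so the strata cover $X$ and the decomposition is genuine.

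The main obstacle in the filtrability part is controlling the closures: one must verify that $\overline{X_+(F_i,\lambda)}\setminus X_+(F_i,\lambda)$ meets only strata $X_+(F_k,\lambda)$ with $k<i$ in the chosen ordering. Here is where the ampleness is essential and not merely cosmetic. The plan is to exploit the fact that a point $x$ in the boundary of a stratum flows (under $\lambda$) to a fixed point whose $\lambda$-weight is controlled by the weight of the stratum containing $x$; comparing these weights against the weight defining $F_i$ forces the boundary into lower-indexed strata. This is precisely the Bia\l ynicki-Birula argument, and I would cite \cite{bb:torus} and \cite{bb:decomp} for the technical core rather than reproduce the weight estimates.

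For the second assertion, assume $X$ is smooth. I would first argue that $X^T$ is smooth: the fixed locus of a torus acting on a smooth scheme is smooth, because locally (on an invariant affine chart, which exists by local linearizability, itself automatic for smooth $X$ since smooth implies normal) the action linearizes and the fixed locus is cut out by the vanishing of the nonzero-weight coordinates, a smooth subscheme. Then, for a fixed component $F_i$, the map $\pi_i:X_+(F_i,\lambda)\to F_i$ sending $x$ to its limit is $T$-equivariant; the claim is that it is a vector bundle. The plan here is to identify $X_+(F_i,\lambda)$ fiberwise with the positive-weight subspace of the tangent space. Using smoothness, the tangent bundle $T_X|_{F_i}$ decomposes $T$-equivariantly into weight subbundles $T^+ \oplus T^0 \oplus T^-$ according to the sign of the $\lambda$-weight, with $T^0 = T_{F_i}$. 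The limit map contracts along $T^+$, and a local linearization near $F_i$ together with an equivariant formal/\'etale-slice argument identifies $X_+(F_i,\lambda)$ with the total space of $T^+$ over $F_i$.

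The hard part will be making the fiberwise identification with $T^+$ rigorous as a morphism of schemes, not just a set-theoretic or tangent-space statement — that is, producing the bundle structure globally over $F_i$ rather than pointwise. The cleanest route is to invoke the equivariant local structure: near each fixed point one linearizes the action, so $X$ looks \'etale-locally like its tangent space with the induced linear $T$-action, and $X_+$ becomes the positive-weight subspace; patching these local trivializations over $F_i$ compatibly with the $T$-action yields the vector bundle. Rather than grind through the patching, I would appeal directly to Bia\l ynicki-Birula's original theorems \cite{bb:torus}, \cite{bb:decomp}, which are stated precisely in this generality, and note that the ample linearization supplies the projectivity hypothesis those theorems require. \hfill $\square$
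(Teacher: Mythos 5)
The paper itself gives no proof of this theorem: it is stated as a quoted result of Bia\l ynicki-Birula, with the references \cite{bb:torus}, \cite{bb:decomp} standing in for the argument (hence the terminal box and the absence of a proof environment). Your proposal --- weight-ordering of the fixed components via an equivariant embedding into $\P(V)$ for filtrability, and linearization along the fixed locus for the bundle structure --- ultimately delegates every technical core to those same two references, so it is correct in outline and essentially the same approach as the paper; the one imprecision worth flagging is your justification that $X^T$ is smooth: Sumihiro's theorem gives an equivariant closed embedding of an invariant affine chart into a $T$-module, not a linearization of the chart itself, so smoothness of the fixed scheme really rests on linear reductivity of $T$ (Fogarty--Iversen, or the \'etale-slice argument you invoke afterwards), which is again covered by the citation.
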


Hence, smooth projective $T$-schemes 
with isolated fixed points 
are $T$-cellular. 
% %On the other hand,  
% %in the category of $T$-quasiprojective schemes, 
% %projective $T$-schemes are always filtrable.

\subsection{$T$-linear schemes}
We introduce here the main objects of our study and outline some of their relevant
features. 

\begin{dfn} \label{linear.dfn}
Let $T$ be an algebraic torus and let $X$ be a $T$-scheme.
\begin{enumerate}
\item We say that $X$ is {\em $T$-equivariantly $0$-linear} 
if it is either empty or isomorphic
to ${\rm Spec}\,({\rm Sym}(V^*))$, where $V$ is a finite-dimensional rational representation of $T$. 

\item For a positive integer $n$, we say that $X$ is {\em $T$-equivariantly $n$-linear}
if there exists a family of $T$-schemes $\{U,Y,Z\}$, 
such that $Z\subseteq Y$ is a $T$-invariant closed immersion   
with $U$ its complement, $Z$ and one of the schemes $U$ or 
$Y$ are $T$-equivariantly $(n-1)$-linear and $X$ is the other
member of the family $\{U,Y,Z\}$.

\item We say that $X$ is {\em $T$-equivariantly linear} 
(or simply, {\em $T$-linear}) if it is $T$-equivariantly $n$-linear 
for some $n\geq 0$. $T$-linear varieties are varieties that are $T$-linear schemes. 
\end{enumerate}
\end{dfn}

%From the inductive definition of $T$-linear schemes  
It follows from the inductive definition 
that if $X$ is $T$-equivariantly $n$-linear, 
then $X^H$ is $T$-equivariantly $n$-linear, for any subtorus $H\subset T$. 
%provided $X$ is $T$-equivariantly $n$-linear.
Moreover, if $T\to T'$ 
is a morphism of algebraic tori, then every $T'$-linear scheme 
is also $T$-linear. 
Observe that $T$-linear schemes 
are {\em linear schemes} in the sense of Jannsen \cite{jann:kth} 
and Totaro \cite{to:linear}. 
While Totaro's class of linear schemes    
%, and \cite{jo:linear}. 
%Totaro's definition of linear schemes 
%The class of linear schemes considered in \cite{to:linear}  
%produces a subclass of that of Jannsen's 
is slightly narrower than that of Jannsen, the difference is nevertheless 
immaterial for our purposes. In fact, one easily checks   
that the main result of Totaro used here, 
namely \cite[Proposition 1]{to:linear}, 
holds for the larger class. %full generality. 
The following result is recorded in \cite{jk:chow}. 

\begin{prop}
Let $T$ be an algebraic torus and let $T'$ be a quotient of $T$. 
Let $T$ act on $T'$ via the quotient map. 
Then the following hold: 
\begin{enumerate}[(i)]
 \item $T'$ is $T$-linear. 
 \item A $T$-cellular scheme is $T$-linear.  
 \item Every $T$-scheme with finitely many $T$-orbits is $T$-linear. 
In particular, a toric variety with dense torus $T$ is $T$-linear. \hfill $\square$
\end{enumerate}
\end{prop}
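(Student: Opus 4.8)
The three statements will all be deduced from the two elementary closure properties built into Definition \ref{linear.dfn}, so the plan is to isolate these first. Note that an $n$-linear scheme is automatically $(n+1)$-linear: take $Y=X$, $Z=\emptyset$, $U=X$. Consequently ``$T$-linear'' is closed under the following two operations, obtained by bumping the members up to a common level $n-1$: (A) if $Z\subseteq Y$ is a $T$-invariant closed immersion with open complement $U$, and both $Z$ and $U$ are $T$-linear, then $Y$ is $T$-linear; and (B) under the same hypotheses, if $Z$ and $Y$ are $T$-linear then $U$ is $T$-linear. In particular (taking $Y=Z\sqcup U$ in (A)), a finite disjoint union of $T$-linear schemes is $T$-linear. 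The whole argument then amounts to exhibiting suitable $T$-invariant filtrations whose successive differences are $T$-linear and assembling them with (A), together with one genuinely geometric input per part.

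For (1), I would fix an isomorphism $T'\cong\G^k$ of algebraic groups; the quotient map $T\to T'$ is then a $k$-tuple of characters $\phi_1,\dots,\phi_k\in\Delta$, and the given action on $T'$ is the restriction to $\G^k$ of the linear action on $\A^k=\mathrm{Spec}(\mathrm{Sym}(V^*))$, where $V=\bigoplus_i\k_{\phi_i}$. It therefore suffices to prove, by induction on the number of $\G$-factors, that every product $P$ of finitely many copies of $\A^1$ and $\G$ (with linear $T$-action) is $T$-linear. The base case (no $\G$-factor) is that $P=\A^m$ is $0$-linear. For the inductive step I would write a factor $P'=P''\times\G$ as the open complement of the $T$-invariant closed subscheme $P''\times\{0\}\cong P''$ inside $P''\times\A^1$; since $P''$ and $P''\times\A^1$ have strictly fewer $\G$-factors, they are $T$-linear by induction, and property (B) gives that $P'$ is $T$-linear.

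For (2), recall that a $T$-cellular scheme is filtrable: there is a chain $\emptyset=\Sigma_0\subset\Sigma_1\subset\cdots\subset\Sigma_m=X$ of $T$-invariant closed subschemes with each $\Sigma_j\setminus\Sigma_{j-1}$ a cell $X_+(F_j,\lambda)\cong\A^{n_j}$. The geometric input, which I would extract from Bia\l ynicki-Birula theory (Theorem \ref{bbdecomp.thm}) together with local linearizability, is that, since $X^T$ is finite, each cell lies over a single fixed point and is $T$-equivariantly isomorphic to a $T$-module, hence is $0$-linear. I would then induct on $j$: assuming $\Sigma_{j-1}$ is $T$-linear, apply (A) to the closed immersion $\Sigma_{j-1}\subset\Sigma_j$ with open complement the cell to conclude that $\Sigma_j$ is $T$-linear; for $j=m$ this gives $X$. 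For (3), let $O_1,\dots,O_r$ be the orbits and filter $X$ by orbit dimension, $X_{\le e}=\bigcup_{\dim O\le e}O$. Since an orbit closure is a union of that orbit and orbits of strictly smaller dimension, each $X_{\le e}$ is a $T$-invariant closed subscheme, and $X_{\le e}\setminus X_{\le e-1}$ is the disjoint union of the finitely many $e$-dimensional orbits. Each orbit, with its reduced structure, is $T$-equivariantly $T/T_x$, a quotient torus on which $T$ acts through the quotient map, hence $T$-linear by (1); so each graded piece is $T$-linear by the disjoint-union remark, and climbing the filtration with (A) shows $X$ is $T$-linear. The final assertion is the special case in which $T$ is dense, since a toric variety has only finitely many $T$-orbits.

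The main obstacle is not the formal assembly---a routine induction once (A) and (B) are in hand---but the two ground-level geometric identifications: that each Bia\l ynicki-Birula cell over a fixed point is genuinely a $T$-representation (so $0$-linear), for which local linearizability of the action is essential, and that each orbit is a quotient torus carrying exactly the action of part (1). A secondary point requiring care is to check that all filtration steps are honest $T$-invariant closed immersions with the correct scheme structure, so that (A) and (B) apply verbatim; for non-reduced $X$ one works with the reduced orbit stratification, which suffices for the cases of interest (e.g. toric and spherical varieties).
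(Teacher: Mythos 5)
The paper itself contains no proof of this proposition: it is recorded from \cite{jk:chow} and stated with a square, so your reconstruction can only be judged on its own terms. Its skeleton is certainly the intended one, and most of it is correct: the padding trick (note that you first need $\emptyset$ to be $n$-linear for every $n$, via the family $\{\emptyset,\emptyset,\emptyset\}$, since Definition \ref{linear.dfn} asks $Z$ and one of $U,Y$ to be $(n-1)$-linear on the nose), the closure properties (A) and (B) it yields, the induction on the number of $\G$-factors in part (1), and the orbit-dimension filtration in part (3) for reduced $X$ are all fine.

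The genuine gap is the key claim in part (2), that each cell of a $T$-cellular scheme is $T$-equivariantly isomorphic to a $T$-module. You justify this by Theorem \ref{bbdecomp.thm} ``together with local linearizability'', but the vector-bundle statement of that theorem requires $X$ to be \emph{smooth}, whereas the definition of $T$-cellular (after Definition \ref{filtable.dfn}) only asks the cells to be abstractly isomorphic to $\A^{n_i}$, and the singular case is precisely the one the proposition is needed for (the paper's standing example of $T$-cellular varieties is Schubert varieties). The claim is nevertheless true, by a different, essentially algebraic argument which your proof is missing: a cell $C=X_+(x_i,\lambda)\cong \A^{n_i}$ is affine and every point flows to $x_i$ under $\lambda$, so if $f\in\k[C]$ is a $T$-eigenvector of weight $\chi$, then $f$ vanishes identically when $\langle\chi,\lambda\rangle$ has the wrong sign (its values along orbits would blow up as $t\to 0$; here reducedness of $C$ is used), and $f$ is the constant $f(x_i)$ when $\langle\chi,\lambda\rangle=0$. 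Thus $\k[C]$ is one-sidedly graded by $\lambda$-weights with degree-zero part $\k$, and the ideal $\mathfrak{m}$ of $x_i$ is spanned by the eigenvectors of nonzero weight. Lifting a $T$-eigenbasis of $\mathfrak{m}/\mathfrak{m}^2$ (which has dimension $n_i$ by smoothness of $\A^{n_i}$) to eigenvectors in $\mathfrak{m}$, graded Nakayama shows these lifts generate $\k[C]$ as a $\k$-algebra, and a Krull-dimension count shows $\k[C]$ is a polynomial ring on them; hence $C$ is equivariantly a representation, i.e. $0$-linear. Without this step, your part (2) is established only for smooth $X$, where it already follows from Theorem \ref{bbdecomp.thm}.

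Finally, the restriction to ``cases of interest'' for non-reduced $X$ in part (3) is unnecessary and leaves the stated generality unproved, but the repair is one line: $X_{\mathrm{red}}\subseteq X$ is a $T$-invariant closed immersion with empty open complement, so property (A) transfers $T$-linearity from $X_{\mathrm{red}}$ to $X$. Hence it suffices to treat reduced schemes, which your filtration argument does; the same remark disposes of any worries about non-reduced strata or filtration steps.
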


It is well-known that 
%Due to the Bruhat decomposition, 
flag varieties, partial flag varieties 
and Schubert varieties come with a paving 
by affine spaces (due to the Bruhat decomposition), 
so they are all $T$-cellular   
and hence $T$-linear.  

\smallskip

Let $B$ be a connected solvable linear algebraic group with maximal torus $T$. 
A result of Rosenlicht \cite[Theorem 5]{ro:sph} shows  
that a homogeneous space for $B$ is isomorphic {\em as a variety} 
to $\mathbb{G}_{a}^r\times \G^s$, 
for some $r,s$. 
%Thus, if $X$ has finitely many $B$-orbits 
%(e.g. $X$ is spherical), then it is linear, 
As observed by Totaro \cite{to:linear}, 
this implies 
that a $B$-scheme with finitely many $B$-orbits (e.g. a spherical variety) 
is linear.   
Nevertheless, 
%since 
%we cannot conclude from this  
%\cite{ro:sph} 
this does not readily imply 
%it does not follow from this 
that such a scheme is $T$-linear, 
as  
Rosenlicht does not show    
that the isomorphism above %of $B$-orbits 
may be chosen $T$-equivariant. 
Presumably his arguments %proof of \cite[Theorem 5]{ro:sph} 
can be adjusted to achieve this.   
In any case, 
%since we cannot directly conclude from 
%\cite{ro:sph} that spherical varieties are $T$-linear, 
we shall give a direct proof of this 
fact, to keep the exposition self contained. %spherical varieties are $T$-linear. 
% 
%Notably, by a result of 
%$T$-linear schemes include a very rich class 
%of geometric objects: spherical varieties. 
%Next we present Brion's proof of this important fact. 

\begin{thm}\label{spherical_are_tlinear.thm}
%Spherical varieties are $T$-linear. 
%Assume ${\rm char}(\k)=0$. 
Let $B$ be a connected solvable linear algebraic group with maximal torus $T$.
Let $X$ be a $B$-scheme. If $B$ acts on $X$ with finitely 
many orbits, then $X$ is $T$-linear. In particular, spherical varieties are $T$-linear. %in arbitrary characteristic. 
\end{thm}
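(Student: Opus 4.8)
The plan is to first reduce the theorem to the case of a single orbit, and then to prove that every $B$-orbit is $T$-linear by induction on the dimension of the unipotent radical of $B$.

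\emph{Reduction to orbits.} We may assume $X$ is reduced, since orbits are reduced and the strata below are taken with reduced structure. Because $X$ carries finitely many $B$-orbits and orbit closures are unions of orbits, orbits of minimal dimension are closed; peeling off a closed orbit at each step, we obtain a chain $\emptyset=\Sigma_0\subset\Sigma_1\subset\cdots\subset\Sigma_r=X$ of $T$-invariant closed subschemes with each $\Sigma_k\setminus\Sigma_{k-1}$ a single orbit, exactly as in Definition \ref{filtable.dfn}. Arguing by induction on $r$, the closed subscheme $\Sigma_{r-1}$ is a $B$-scheme with fewer orbits and hence $T$-linear, while $O:=\Sigma_r\setminus\Sigma_{r-1}$ is an orbit; granting that $O$ is $T$-linear, the family $\{O,X,\Sigma_{r-1}\}$ of Definition \ref{linear.dfn} (with $\Sigma_{r-1}\subset X$ the closed member and $O$ its open complement) exhibits $X$ as $T$-linear. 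Everything therefore reduces to a single orbit $O\cong B/H$.

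\emph{Structure of an orbit.} Write $B=T\ltimes U$ with $U=R_u(B)$ the unipotent radical, and induct on $\dim U$. If $U=\{e\}$ then $B=T$ and $O=T/H$ is a connected diagonalizable group, that is, a quotient torus of $T$; with $T$ acting through the quotient map this is $T$-linear, as recorded above. If $\dim U\ge 1$, pick a one-dimensional $T$-stable subgroup $N\cong\mathbb{G}_a$ of the centre $Z(U)$, on which $T$ acts by a character $\chi$; such $N$ exists because in characteristic zero $Z(U)$ is a nontrivial $T$-stable vector group, decomposing into weight lines, and $N$ is normal in $B$ since it is central in $U$ and $T$-stable. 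As $\operatorname{char}(\k)=0$, the group $\mathbb{G}_a$ has no proper nontrivial closed subgroup, so $N\cap H$ equals $N$ or $\{e\}$. If $N\subseteq H$, then $O=(B/N)\big/(H/N)$ is an orbit of $B/N$, whose unipotent radical $U/N$ has strictly smaller dimension and whose maximal torus is the isomorphic image of $T$, and the inductive hypothesis applies. If $N\cap H=\{e\}$, then the canonical $B$-equivariant map $\pi:B/H\to B/NH$ is a torsor under $N\cong\mathbb{G}_a$, with $T$ acting on the fibre through $\chi$; its base $B/NH=(B/N)/(NH/N)$ is an orbit of $B/N$, hence $T$-linear by induction. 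It thus remains to pass across this torsor.

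\emph{The torsor step, and the main obstacle.} The crux is the following lemma: the total space of a $T$-equivariant torsor $\pi:Y\to Z$ under $\mathbb{G}_a$ (with $T$ acting on $\mathbb{G}_a$ by a character) over a $T$-linear scheme $Z$ is again $T$-linear. Since $\mathbb{G}_a$ is a special group, such a torsor is Zariski-locally trivial, and over any $T$-stable affine subscheme it is in fact $T$-equivariantly trivial, because $T$ is linearly reductive and the obstruction lies in the first cohomology of a quasicoherent sheaf on an affine scheme, which vanishes. Over such a piece $S$ one has $\pi^{-1}(S)\cong S\times\k_\chi$; in particular, over a $0$-linear base $Z=\operatorname{Spec}(\operatorname{Sym}(V^*))$ the torsor is trivial and $Y\cong V\times\k_\chi$ is itself a $T$-module, hence $0$-linear. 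I expect the main difficulty to lie precisely here: one must organise a $T$-stable stratification of the $T$-linear base $Z$ into pieces over which the torsor trivialises $T$-equivariantly, and check that the resulting decomposition of $Y$ is compatible with the recursive operations of Definition \ref{linear.dfn}, so that $Y$ inherits $T$-linearity. Controlling the equivariant triviality and this compatibility---rather than the group theory of $B$---is the heart of the argument, and it is where linear reductivity of $T$ and the characteristic-zero hypotheses (the vector-group structure of $Z(U)$ and the simplicity of $\mathbb{G}_a$) enter essentially. Applying the lemma to $\pi:B/H\to B/NH$ completes the induction and the proof.
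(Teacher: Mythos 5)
Your proposal follows the same skeleton as the paper's proof: reduce to a single orbit $B/H$, induct on the dimension of the unipotent part, split off a central one-dimensional $T$-stable unipotent subgroup $N\cong\mathbb{G}_a$ (normal in $B$), distinguish $N\subseteq H$ from $N\cap H=\{e\}$, and in the latter case trivialize the $N$-torsor $B/H\to B/NH$ $T$-equivariantly using characteristic zero. Your affine-case trivialization (linear reductivity of $T$ forces an eigenvector trivialization) is precisely the paper's key claim, which it proves concretely by decomposing a coordinate $t$ with $D(t)=1$ into $T$-weight vectors and taking the component of weight $-\alpha$. The gap is exactly the step you flag and defer: the lemma that a $T$-equivariant $\mathbb{G}_a$-torsor over an \emph{arbitrary} $T$-linear base is $T$-linear. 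Because your induction hypothesis records only that the base $B/NH$ is $T$-linear, you genuinely need this lemma, yet you prove it only when the base is $0$-linear, and the stratification strategy you sketch for the general case does not go through: inducting on the linearity level of the base $Z$, the favourable case is when $Z$ is the ambient member $Y$ of a defining family $\{U,Y,Z'\}$ of Definition \ref{linear.dfn} (then restricting the torsor to $Z'$ and to $U$ exhibits the total space as $T$-linear), but when $Z$ is the open member $U$ or the closed member $Z'$, a torsor over $Z$ need not extend to $Y$ at all, so there is no torsor over the remaining members to which the inductive hypothesis could be applied. As written, ``applying the lemma to $\pi:B/H\to B/NH$'' is therefore unjustified.

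The repair is to strengthen the induction hypothesis, which is how the paper proceeds: one proves not merely that each orbit is $T$-linear, but that the fibration $B/H\to B/UH$ is $T$-equivariantly \emph{trivial}, i.e. $B/H\cong B/UH\times\mathbb{A}^{\ell}$ with $T$ acting through the quotient torus $B/UH$ on the first factor and linearly on the second. With this hypothesis, at your torsor step the base $B/NH$ is known to be a quotient torus times a $T$-module, in particular affine, so only the affine case of your lemma is ever needed, and your linear-reductivity argument (equivalently, the paper's eigenvector computation) applies verbatim. The conclusion that $B/H$ is a quotient torus times a $T$-module then yields $T$-linearity directly, since a quotient torus is $T$-linear and taking the product with a $T$-module preserves $T$-linearity by an evident induction on Definition \ref{linear.dfn}. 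With this single change of inductive statement, your argument closes up and coincides with the paper's.
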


\begin{proof} %Brion's proof is as follows. 
The following argument was shown to the author by M. Brion (personal communication). 
Since $X$ is a disjoint union of $B$-orbits
and these are $T$-stable, it suffices to show that every $B$-orbit
is $T$-linear. 
Write this orbit as $B/H$ where $H$ is a closed
subgroup of $B$. 
Let $U$ be the unipotent radical of $B$. Then, 
we have a natural map $f : B/H \to B/UH$ and the right-hand side is a
torus (for it is a homogeneous space under the torus $T = B/U$).
Moreover, $f$ is a $B$-equivariant fibration with fiber $UH/H = U/(U \cap H)$, 
which is an affine space (as it is homogeneous under $U$).

We will show that the fibration $f$ is $T$-equivariantly trivial 
by factoring it into $T$-equivariantly
trivial fibrations with fiber the affine line. 
For this, we argue by induction on the dimension of $UH/H$. 
If this dimension is zero, then there is nothing to prove. 
If it is positive, then
$U$ acts non-trivially on $B/H$; 
replacing $U$ and $B$ with suitable
quotients, we may assume that $U$ acts faithfully. 
Because $U$ is a unipotent group normalized by $T$, 
we may find a one-dimensional unipotent subgroup $V$ 
of the center of $U$, which is normalized by
$T$ \cite[Lemma 6.3.4]{sp:lag}. 
So $V$ is isomorphic to 
%the additive group 
$\mathbb{G}_a$, and $T$ acts linearly on $V$ 
with some weight $\alpha$. 
By construction, %On the other hand, 
$V$ acts freely on $B/H$ via left multiplication, and the
quotient map is the natural map $B/H\to B/VH$, which is a
principal $V$-bundle. 
Since the variety $B/VH$ is affine (e.g by
the induction assumption) and $V\simeq \mathbb{G}_a$, this bundle
is trivial. 
%Note that $T$ acts linearly on $V$ 
%with some weight $\alpha$. 
%
%
%Hence $B/H$ is an affine variety and we have an
%isomorphism of coordinate rings  $k[B/H] = k[B/VH][t]$
%where $t$ is an indeterminate. 
%We claim that $t$ may be chosen an eigenvector of $T$. 
%To prove the claim, note that the additive group $V$ acts
%on $B/H$ and on $k[B/H]$ by automorphisms. 
%
%Because the principal $V$-bundle $B/H \to B/VH$ is trivial, the
%isomorphism $k[B/H] = k[B/VH][t]$ can be chosen so that $V$ acts on
%the right-hand side via its action by translations on $t$, that is, 
%$v \cdot t = v + t$
%(indeed, this is equivalent to having a $V$-equivariant isomorphism
%$B/H \cong B/VH \times V$). 
%So
%$B/H = B/VH \times V$,
%equivariantly for the actions of $T$ and $V$; here $T$ acts
%linearly on $V$ with weight $\alpha$. 
The isomorphism
$B/H = B/VH \times V$, equivariant for the action of $V$,
yields a regular function $g$ on $B/H$ such that
$g(v x) = v + g(x)$ for all $x$ in $B/H$ and $v$ in 
$V$ (identified to $\mathbb{G}_a$).
Let $T$ act on the ring of regular functions on $B/H$ via its action on
$B/H$ by left multiplication. 
We claim that $g$ may be chosen an eigenvector of $T$. 
Indeed, write $g$ as a sum of weight vectors
$g_{\lambda}$. Then for any $t$ in $T$, we obtain
$(t g) (v x) = g(t v x) = \alpha(t) v + (t g)(x)$
which yields
$\sum_{\lambda} \lambda(t) g_{\lambda}(v x) =
\alpha(t) v + \sum_{\lambda} \lambda(t) g_{\lambda}(x)$.
By viewing both sides as functions of $t$ and using linear independence
of characters, one gets
$g_{\alpha}(v x) = v + g_{\alpha}(x)$, and 
$g_{\lambda}(v x) = g_{\lambda}(x)$ for all $\lambda\neq \alpha$.
So these $g_{\lambda}$ are invariant under $V$, i.e.   
they are regular functions
on $B/VH$, and one may subtract them from $g$ to get $g= g_{\alpha}$. 
Now that the claim has been verified,  
%we conclude that 
%Hence, 
it follows that the product map $B/H\to B/VH \times V$,
where the second map is $g_{\alpha}$, yields the desired $T$-equivariant isomorphism,   
%
%
%
%
%Now %that the claim has been verified,  
and we conclude by induction.  
\end{proof}

%%%%%%%%%%%%%%%%%%%%%%%%%%%%%%%%%%%%%%%%%%% EQUIVARIANT INTERSECTION THEORY   %%%%%%%%%%%%%%%%%%%%%%%%%%%%%%%%%%%%%%%%%%%%
\subsection{Description of equivariant Chow groups}

Next we state %here 
Brion's presentation  
of the equivariant Chow groups of schemes with a torus action   
in terms of  
invariant cycles \cite[Theorem 2.1]{bri:eqchow}. 
It also shows how to recover usual Chow groups from equivariant ones.

\begin{thm}\label{Tequiv.thm}
Let $X$ be a $T$-scheme. Then the $S$-module $A^T_*(X)$ is defined by generators $[Y]$,  where 
$Y$ is an invariant subvariety of $X$, 
and relations $[{\rm div}_Y(f)]-\chi[Y]$ where $f$ is a 
rational function on $Y$ which is an eigenvector of $T$ of weight $\chi$. Moreover, the 
map $A^T_*(X)\to A_*(X)$ vanishes on $\Delta A^T_*(X)$, and it induces 
an isomorphism $A^T_*(X)/\Delta A^T_*(X)\to A_*(X).$
\hfill $\square$
\end{thm}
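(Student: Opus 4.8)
The plan is to compute both assertions inside the Borel-type mixed spaces that define $A^T_*(X)$. Since $T\cong\G^r$ is a torus, I would choose the pairs $(V,U)$ so that $U/T$ is a product of projective spaces; as a torus is a special group, the principal bundle $U\to U/T$ is Zariski-locally trivial, so $p_{X,T}\colon X_T=(X\times U)/T\to U/T$ is a Zariski-locally trivial fibration with fibre $X$. Under the identification $A^T_i(X)=A_{i+\dim U-\dim T}(X_T)$ (always working in the stable range), each character $\chi\in\Delta$ corresponds to the first Chern class of a line bundle $L_\chi$ on $U/T$, and the $S$-module structure is cap product with the pulled-back classes $c_1(L_\chi)$. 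All the geometry of the theorem then takes place on $X_T$.

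I would prove the quotient statement first, as it is the cleaner half. Fix a hyperplane $H\subset U/T$ in one projective factor; then $U/T\setminus H$ is an affine cell, and local triviality gives $p_{X,T}^{-1}(U/T\setminus H)\cong X\times\A^{d}$. The localization sequence for $p_{X,T}^{-1}(H)\hookrightarrow X_T$ is $A_*(p_{X,T}^{-1}(H))\to A_*(X_T)\to A_*(X\times\A^{d})\to 0$, and homotopy invariance identifies the right-hand term with $A_*(X)$ so that the surjection becomes the fibre-restriction map $A^T_*(X)\to A_*(X)$. The left-hand map is cap product with $[H]$, i.e. the action of the corresponding character; letting the factors of $U/T$ grow and ranging over all their hyperplanes shows that its image is exactly $\Delta\,A^T_*(X)$. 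This simultaneously proves that $A^T_*(X)\to A_*(X)$ vanishes on $\Delta\,A^T_*(X)$, is surjective, and has kernel precisely $\Delta\,A^T_*(X)$, yielding $A^T_*(X)/\Delta\,A^T_*(X)\cong A_*(X)$.

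For the presentation I would separate generation from completeness of the relations. Generation by invariant subvarieties I would get by dévissage: any subvariety $W\subset X_T$ has $T$-invariant preimage $\tilde W\subset X\times U$, and its image $Y=\overline{q(\tilde W)}$ under the equivariant projection $q\colon X\times U\to X$ is a $T$-invariant subvariety with $W\subset Y_T$; if $\dim W$ is maximal then $W=Y_T$, and otherwise restriction to the open cell lowers the dimension of the support inside $Y$, so induction on $\dim X$ together with the localization sequence in $X$ reduces every class to $S$-combinations of classes $[Y]$ of invariant subvarieties. The relations come directly from invariant rational functions: given an invariant subvariety $Y$ and a $T$-eigenfunction $f$ on $Y$ of weight $\chi$, choose a rational $T$-eigenfunction $h$ on $U$ of the same weight $\chi$; then $f/h$ is $T$-invariant and descends to a rational function on $Y_T\subset X_T$ whose principal divisor equals $({\rm div}_Y f)_T-\chi\cap[Y_T]$, since the pulled-back divisor of $h$ represents $c_1(L_\chi)=\chi$. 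Vanishing of this principal divisor in $A_*(X_T)$ gives exactly $[{\rm div}_Y f]-\chi[Y]=0$ in $A^T_*(X)$, and conversely any ordinary divisor relation on $X_T$ is, modulo the hyperplane ($\Delta$) corrections, of this eigenfunction type.

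The main obstacle I expect is completeness: showing that the $S$-module presented by these generators and relations maps \emph{isomorphically}, not merely surjectively, onto $A^T_*(X)$. I would organize this as an induction on the approximation level, treating the localization sequence above as a short exact sequence of presented modules --- the kernel term $A_*(p_{X,T}^{-1}(H))$ carries the presentation at one lower level, and the cokernel is $A_*(X)$ with its classical divisor presentation --- and checking that the $\chi$-twisted divisor relations glue these two presentations compatibly with the connecting map being multiplication by $\chi$. The delicate bookkeeping is precisely that no relation beyond the $\chi$-twisted divisors survives in the extension, and it is here that the explicit form $f/h$ of the invariant rational functions, and the control afforded by taking $U/T$ to be a product of projective spaces, are essential.
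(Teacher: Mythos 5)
The paper itself does not prove this statement: Theorem \ref{Tequiv.thm} is quoted verbatim from Brion \cite[Theorem 2.1]{bri:eqchow} (hence the $\square$ right after the statement), so your proposal has to be measured against Brion's argument. Your framework is the right one — mixed spaces $X_T\to U/T$ with $U/T$ a product of projective spaces, hyperplane classes realizing the $\Delta$-action, and the invariant function $f/h$ used to check that the relations $[{\rm div}_Y(f)]-\chi[Y]$ do hold in $A^T_*(X)$ — but the generation step, and with it the whole proof, has a genuine gap. Your d\'evissage is circular: given $W\subset X_T$ with $Y=\overline{q(\tilde W)}$ and $\dim W<\dim Y_T$, restriction to the open cell writes $[W]$, modulo classes pushed forward from the boundary slices, as a sum of lifts $[\,\overline{Z\times\A^{d}}\,]$ of lower-dimensional subvarieties $Z\subset Y$; but these $Z$ are in general \emph{not} invariant, and their orbit closures $\overline{T\cdot Z}$ can again equal $Y$ (take $Y=\P^1$ with $t\cdot[x:y]=[tx:y]$ and $Z=\{[1:1]\}$), so the new classes are of exactly the same kind and the same dimension as the one you started from, and no inductive parameter decreases. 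The missing idea — the geometric heart of Brion's proof, of Hirschowitz's theorem, and of \cite[Lemma 1]{f:sph} — is specialization to invariant position: the flat limit $Z_0$ of $t\cdot Z$ as $t\to 0$ is automatically $T$-invariant (since $s\cdot Z_0=\lim_{t\to 0}st\cdot Z=Z_0$), it exists after an equivariant completion \cite{su:eq}, and the resulting rational equivalence is realized on $X_T$ by an explicit \emph{invariant} rational function. In the $\P^1$ example this is the function $(x-yu_0)/x$ on $X_T$, whose divisor is $[\,\overline{Z\times\A^N}\,]-[\{[0:1]\}\times^T U]$, expressing the problematic class through the fixed point. Without this mechanism neither generation nor completeness of the relations — which you yourself flag as the main obstacle and only sketch — can be established.

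The first half of your argument has a related flaw, and a minor one. Minor: when $r=\dim T>1$, removing one hyperplane $H$ from one factor leaves $\A^N\times(\P^N)^{r-1}$, not an affine cell; you must delete the preimages of hyperplanes in \emph{all} $r$ factors to trivialize the fibration. More seriously, the left-hand arrow of the localization sequence is the proper pushforward $i_*\colon A_*\bigl(p_{X,T}^{-1}(\bigcup_j H_j)\bigr)\to A_*(X_T)$, which is not ``cap product with $[H]$'': one has ${\rm im}(c_1\cap-)\subseteq{\rm im}(i_*)$ for free, but the reverse inclusion — exactly what you need to conclude that the kernel of restriction to the fibre is $\Delta A^T_*(X)$ and nothing more — is seen on classes of invariant subvarieties, via $i_*[Y_T|_{H_j}]=c_1(p^*\mathcal{O}(H_j))\cap[Y_T]=\chi_j\,[Y_T]$, and hence is available only \emph{after} generation by invariant classes is known. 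So the quotient isomorphism must come after generation, not before; as written, each half of your proposal silently presupposes the other.
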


Now let $\Gamma$ be a connected solvable 
linear algebraic group with maximal torus $T$. 
If $X$ is a $\Gamma$-scheme, 
then the generators of $A^T_*(X)$ 
in Theorem \ref{Tequiv.thm} can be 
taken to be $\Gamma$-invariant 
\cite[Proposition 6.1]{bri:eqchow}. 
In particular, if $X$ has finitely many $\Gamma$-orbits (e.g. $X$ is spherical), 
then the $S$-module $A^T_*(X)$ is 
{\em finitely generated} by 
the classes of the $\Gamma$-orbit closures. %(this holds e.g. when $X$ is a spherical variety). 
%But 
% %This is a special case of the following lemma. 
%A more general statement is valid 
%for all $T$-linear schemes.  
More generally, one has the following lemma. 
%\medskip 

\begin{lem} \label{finite.rem}
Let $X$ be a $T$-linear scheme. Then the $S$-module $A^T_*(X)$ is 
finitely generated. In particular, $A_*(X)$ is a finitely generated abelian group. 
%Let $X$ be a $T$-linear scheme. 
Moreover, if $X$ is complete, then 
rational equivalence and algebraic equivalence coincide on $X$.  
\end{lem}

\begin{proof}
The first two assertions are easy consequences of the  
inductive definition of $T$-linear 
schemes (see e.g. \cite{to:linear}). %and the fact that for $0$-linear schemes,
Regarding the last one, observe that if $X$ is complete, then 
the kernel of the natural morphism 
$A_*(X)\to B_*(X)$ is divisible \cite[Example 19.1.2]{f:int},  
and thus trivial, for $A_*(X)$ is finitely generated. %by Lemma \ref{finite.rem}.  
\end{proof}

\medskip

Recall that if $X$ is a smooth equidimensional $T$-scheme, 
then 
$A^*_T(X)$ is isomorphic to the equivariant Chow group of $X$ graded 
by codimension \cite[Proposition 4]{eg:eqint}.

\begin{thm} [\protect{\cite{bri:eqchow}, \cite{vv:hkth}}] \label{smth.comp.free.rem} 
Let $X$ be a smooth $T$-variety. If $X$ is complete,      
then the $S_\Q$-module $A^T_*(X)_\Q$ is free. 
Moreover, the restriction homomorphism $i^*_T:A^*_T(X)_\Q\to A^*_T(X^T)_\Q$ 
is injective, and its image is the intersection of all the images of the 
restriction homomorphisms $i^*_{T,H}:A^*_T(X^H)_\Q\to A^*_T(X^T)_\Q$, where 
$H$ runs over all subtori of codimension one in $T$.  \qed 
\end{thm}

A few comments are in order here.  %Let $X$ be a smooth $T$-variety. 
First, Brion showed that Theorem \ref{smth.comp.free.rem} 
%$X$ is projective 
holds in the special case that $X$ is projective 
%Under the additional assumption that $X$ is projective, 
%Theorem \ref{smth.comp.free.rem} was first proved by Brion  
\cite[Theorems 3.2 and 3.3]{bri:eqchow}. 
%In the general case, %when $X$ is complete, 
%The general case   %Theorem \ref{smth.comp.free.rem} 
%follows from the results of Vezzosi and Vistoli 
%\cite{vv:hkth}. 
%In this reference, the authors 
%extended 
Later on, Vezzosi and Vistoli \cite{vv:hkth} generalized 
Brion's results to the setting of equivariant higher $K$-theory 
and 
%replacing the assumption of projectivity with that of completeness, 
established the corresponding %equivariant higher $K$-theory 
analogue of 
Theorem \ref{smth.comp.free.rem}, 
which holds for $X$ complete \cite[Corollary 5.11]{vv:hkth}. 
From this, by making appropriate changes in the proofs of 
\cite[Proposition 5.13 and Theorem 5.4]{vv:hkth}, 
%%i.e. replacing \cite[Lemma 4.2]{vv:hkth} with \cite[Proposition 3.2 (i)]{bri:eqchow}, 
% \cite{bri:eqchow}
one obtains Theorem \ref{smth.comp.free.rem} 
in its full form.    
%arguing as in \cite[Section 5]{vv:hkth}. 
The details can be found 
in a preprint version of \cite{vv:hkth} (arXiv version 3). 
Alternatively, see \cite[Sections 9 and 10]{kr:chow},  
where the results of \cite[Section 5]{vv:hkth} 
have been generalized  to equivariant higher Chow groups. 

\medskip

In characteristic zero  
Theorem \ref{smth.comp.free.rem} extends 
to %the rational equivariant Chow cohomology 
all possibly singular complete varieties \cite[Section 7]{go:opk}. See the Appendix for 
a review of the main results in this regard.

\medskip

The next lemma will become relevant later,   
when integrality of the 
equivariant operational Chow rings is discussed 
(cf. Lemma \ref{smooth.kro.comparison.lem}).    
It is essentially due to Brion, del Ba\~no and Karpenko.  
%a summary of the results of other authors. 

\begin{lem}\label{freeness.fix.global.lem}
Let $X$ be a smooth projective $T$-variety. Then the following 
are equivalent. 
\begin{enumerate}[(i)]
\item $A_*(X^T)$ is $\Z$-free. 
\item $A^T_*(X)$ is $S$-free.
\item $A_*(X)$ is $\Z$-free.
\end{enumerate}  
If moreover $X$ is $T$-linear, then any (and hence all) of these conditions hold. 
\end{lem}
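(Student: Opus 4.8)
The plan is to reduce all three conditions to the single assertion that each connected component of $X^T$ has $\Z$-free Chow groups, using the Bia\l ynicki-Birula decomposition. Since $X$ is smooth and projective, fix a generic one-parameter subgroup $\lambda$; by Theorem \ref{bbdecomp.thm} the associated BB-decomposition is filtrable, $X^T=\bigsqcup_i F_i$ is smooth, and each stratum $X_+(F_i,\lambda)$ is a $T$-equivariant vector bundle over $F_i$. I record two elementary facts. First, because $T$ acts trivially on $F_i$, one has $A^T_*(F_i)\cong A_*(F_i)\otimes_\Z S$ (\cite{eg:eqint}); hence $A^T_*(F_i)$ is $S$-free if and only if $A_*(F_i)$ is $\Z$-free, the nontrivial direction following by reducing modulo the graded maximal ideal $\mathfrak{m}=\Delta S$, since $(A_*(F_i)\otimes_\Z S)\otimes_S\Z=A_*(F_i)$. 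Second, equivariant homotopy invariance gives $A^T_*(X_+(F_i,\lambda))\cong A^T_*(F_i)$ up to a shift of grading by the rank of the bundle.

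The heart of the argument is the claim that the filtration $\Sigma_0\subset\cdots\subset\Sigma_m=X$ of Definition \ref{filtable.dfn}, with $j$-th stratum $\Sigma_j\setminus\Sigma_{j-1}=X_+(F_{\sigma(j)},\lambda)$, induces split short exact sequences of $S$-modules
\[
0\to A^T_*(\Sigma_{j-1})\to A^T_*(\Sigma_j)\to A^T_*(X_+(F_{\sigma(j)},\lambda))\to 0,
\]
equivalently a direct sum decomposition $A^T_*(X)\cong\bigoplus_i A^T_*(F_i)$ (up to shifts). Short exactness is the vanishing of the connecting homomorphism $A^T_*(X_+(F_{\sigma(j)},\lambda),1)\to A^T_*(\Sigma_{j-1})$ in the equivariant higher Chow sequence, while the splitting is furnished by the refined BB Gysin maps $\pi_i^*$ followed by pushforward along the stratum closures, using the smoothness of $X$. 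This is essentially Brion's contribution \cite{bri:eqchow}, and it is the main obstacle: I would either invoke it directly, or reprove it using the opposite decomposition attached to $-\lambda$, whose strata restrict to the fixed components in a triangular pattern, simultaneously splitting the sequences and showing that restriction to the smooth fixed locus embeds $A^T_*(X)$ into $A^T_*(X^T)$. Reducing the split sequences modulo $\mathfrak{m}$—which returns ordinary Chow groups by Theorem \ref{Tequiv.thm} and preserves split exactness—yields the parallel decomposition $A_*(X)\cong\bigoplus_i A_*(F_i)$.

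Granting these two decompositions, the equivalences are formal. By the first fact above, condition (ii) says that $\bigoplus_i A_*(F_i)\otimes_\Z S$ is $S$-free, while conditions (i) and (iii) both say that $\bigoplus_i A_*(F_i)$ is $\Z$-free; since a direct sum of modules is free precisely when each summand is, all three are equivalent to the statement that every $A_*(F_i)$ is $\Z$-free. No finiteness hypothesis is needed for this step, only the two direct sum decompositions.

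For the final assertion, suppose in addition that $X$ is $T$-linear. Then $X$ is in particular a smooth projective linear scheme, so by Lemma \ref{finite.rem} its Chow group $A_*(X)$ is finitely generated, and by the structure theory of smooth projective linear schemes (del Ba\~no and Karpenko; see also \cite{to:linear}) it is moreover $\Z$-free. This is condition (iii), so by the equivalence just proved conditions (i) and (ii) hold as well. Equivalently, since $X^T$ is again smooth, projective and $T$-linear, the same structure theory applied to $X^T$ shows directly that $A_*(X^T)$ is $\Z$-free, which is condition (i). The one genuinely substantial input throughout is the split short exactness of the equivariant BB-sequences, i.e. Brion's direct sum decomposition; everything else is bookkeeping with tensor products and reduction modulo $\mathfrak{m}$.
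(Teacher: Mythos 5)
Your reduction of the three conditions to freeness of each $A_*(F_i)$ is fine as bookkeeping, but the master claim it rests on --- the \emph{unconditional, integral} splitting $A^T_*(X)\cong\bigoplus_i A^T_*(F_i)$ (up to shifts) --- is exactly where the argument breaks, and neither of your proposed justifications delivers it. Brion's \cite[Corollary 3.2.1]{bri:eqchow} over $\Z$ is \emph{conditional}: it gives the splitting (equivalently, $S$-freeness of $A^T_*(X)$) under the hypothesis that $A_*(X^T)$ is $\Z$-free, i.e.\ under condition (i); only the rational statement is unconditional. So ``invoking it directly'' to prove (ii)$\Rightarrow$(i) or (iii)$\Rightarrow$(i) is circular. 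The sketched reproof via the opposite decomposition for $-\lambda$ fares no better integrally: the triangular restriction matrix has equivariant Euler classes on its diagonal, and these are non-zero-divisors but not units of $S$, so triangularity yields injectivity and a basis only after localization. More basically, the surjection $A^T_*(\Sigma_j)\to A^T_*(X_+(F_{\sigma(j)},\lambda))\cong A_*(F_{\sigma(j)})\otimes_\Z S$ splits as $S$-modules only when the target is projective, i.e.\ essentially only when $A_*(F_{\sigma(j)})$ is already $\Z$-free --- condition (i) again. (The unconditional equivariant splitting may well be true, via Karpenko's relative cellular spaces applied to the approximations $X_T\to U/T$, but that is a substantially stronger input which you neither cite nor prove.)

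The paper's proof is organized precisely to sidestep this issue: it proves the cycle (i)$\Rightarrow$(ii)$\Rightarrow$(iii)$\Rightarrow$(i), using Brion's conditional result only where its hypothesis is available (namely (i)$\Rightarrow$(ii)), the elementary quotient $A_*(X)\cong A^T_*(X)/\Delta A^T_*(X)$ of Theorem \ref{Tequiv.thm} for (ii)$\Rightarrow$(iii), and --- as its only unconditional input --- the \emph{non-equivariant} motivic Bia\l ynicki-Birula decomposition of del Ba\~no and Karpenko, $A^j(X)\cong\oplus_i A^{j-d_i}(F_i)$, for (iii)$\Rightarrow$(i). Your treatment of the last assertion has a related circularity: del Ba\~no--Karpenko is not a ``structure theory of linear schemes''; applied to a smooth projective $T$-linear $X$ it only reduces freeness of $A_*(X)$ to freeness of the $A_*(F_i)$, and the $F_i$ are again smooth projective linear varieties, now with trivial $T$-action, so no progress is made. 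What actually proves (iii) in the $T$-linear case is the paper's route: $X$ is in particular a projective linear variety, hence satisfies the K\"unneth formula, so the diagonal class decomposes and Theorem \ref{es.thm}(ii) (Ellingsrud--Str\o mme) gives that $A_*(X)$ is $\Z$-free.
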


\begin{proof}
The implication (i)$\Rightarrow$(ii) follows from \cite[Corollary 3.2.1]{bri:eqchow}, 
as any smooth projective variety is filtrable (Theorem \ref{bbdecomp.thm}). 
That (ii) implies (iii) is a 
consequence of Theorem \ref{Tequiv.thm}. 
To show that (iii) implies (i) we use    
a result of del Ba\~no \cite[Theorem 2.4]{ba:mot} and Karpenko \cite[Section 6]{kar:cell}. 
Namely, let $\lambda$ be a generic one-parameter subgroup of $T$, and let 
$X^T=\bigsqcup_i F_i$ be the decomposition of $X^T$ into connected components. 
Then, for every non-negative integer $j\leq \dim{(X)}$, %$0\leq j\leq  \dim{(X)}$, 
there is a natural isomorphism 
$\xymatrix{\bigoplus_i A^{j-d_i}(F_i)\ar^{\;\; \;\simeq}[r]& A^j(X)},$ 
where $d_i$ is the codimension of $X_+(F_i,\lambda)$ in $X$ 
(all spaces involved are smooth, 
so there is an intersection product on the Chow groups graded by codimension). 
These isomorphisms yield the assertion (iii)$\Rightarrow$(i).  
%\smallskip

Finally, 
if $X$ is a smooth projective $T$-linear variety, 
then, in particular, it is a projective linear variety, 
and so it satisfies the K\"{u}nneth formula (see below). 
Now Theorem \ref{es.thm} (ii) %yields freeness of $A_*(X)$ over $\Z$, that is,  
implies that condition (iii) of the lemma holds for $X$. This concludes the argument.  
\end{proof}

\medskip

For any schemes $X$ and $Y$, one has a K\"{u}nneth map  %in $\mathbf{Sch}_k$
$$A_*(X)\otimes A_*(Y)\to A_*(X\times Y),$$  
taking 
$[V]\otimes [W]$ to $[V\times W]$,  where $V$ and $W$ are subvarieties of $X$ and $Y$. 
This is an isomorphism only for very special schemes, e.g. linear 
schemes \cite[Proposition 1]{to:linear}; but when it is, 
strong consequences can be derived from it, as we shall see below.  
Let us start with     
the following result due to Ellingsrud and Stromme \cite[Theorem 2.1]{es:num}.  

\begin{thm}\label{es.thm}
Let $X$ be a smooth complete variety.  
Assume that the rational equivalence class $\delta$ of the diagonal $\Delta(X)\subseteq X\times X$  
is in the image of the K\"{u}nneth map
$A_*(X)\otimes A_*(X)\to A_*(X\times X).$
Let $\delta=\sum u_i\otimes v_i$  
be a corresponding decomposition of $\delta$,  
where $u_i,v_i\in A_*(X)$. 
Then  
\begin{enumerate}[(i)]
\item The $v_i$ generate $A_*(X)$, i.e. any $z\in A_*(X)$ has the form $\sum (u_i\cdot z)v_i$. %(equivalently, ),
%\medskip

\item Numerical and rational equivalence coincide on $X$.  %(cf. \cite[Example 19.1.4]{f:int}).  
In particular, $A_*(X)$ is a free $\Z$-module. 
%\medskip

\item If $k=\C$, then %the $v_i$ also generate $H_*(X,\Z)$, and 
the cycle map $cl_X:A_*(X)\to H_*(X,\Z)$ is 
an isomorphism. In particular, the homology and cohomology groups of $X$ 
vanish in odd degrees. \qed
\end{enumerate}  %\hfill $\square$  
\end{thm}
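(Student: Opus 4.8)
The plan is to realize the diagonal as the identity correspondence and to exploit the given Künneth decomposition to compute its action explicitly. Write $p_1,p_2\colon X\times X\to X$ for the two projections; since $X$ is nonsingular, so is $X\times X$, and we may freely use the intersection product on $A_*(X\times X)$. For a class $\gamma\in A_*(X\times X)$ and $z\in A_*(X)$, set $\gamma_*(z)=p_{2*}\!\left(p_1^*(z)\cdot\gamma\right)$; the basic fact we shall use is that the diagonal acts as the identity, i.e. $\delta_*(z)=z$ for all $z$, where $\delta=[\Delta(X)]$. On the other hand, the image of $u\otimes v$ under the Künneth map is exactly $p_1^*(u)\cdot p_2^*(v)$, since the proper intersection of $u\times X$ with $X\times v$ is $u\times v$. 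Thus the hypothesis gives $\delta=\sum_i p_1^*(u_i)\cdot p_2^*(v_i)$, and since $\delta$ is pure of dimension $\dim X$ we may assume each term homogeneous with $\dim u_i+\dim v_i=\dim X$.

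For (i) I would substitute this decomposition into $z=\delta_*(z)$ and push the computation through the projection formula. Using that $p_1^*$ is a ring homomorphism, $p_1^*(z)\cdot p_1^*(u_i)=p_1^*(z\cdot u_i)$, and the projection formula for $p_2$, one obtains $z=\sum_i v_i\cdot p_{2*}p_1^*(z\cdot u_i)$. The only surviving terms are those for which $z\cdot u_i$ is a zero-cycle: for every other $i$ the class $p_{2*}p_1^*(z\cdot u_i)$ lies in a vanishing graded piece of $A_*(X)$, while for the zero-cycle terms $p_{2*}p_1^*(z\cdot u_i)=\deg(z\cdot u_i)\,[X]$. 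Since $[X]$ is the unit for the intersection product, this yields $z=\sum_i(u_i\cdot z)\,v_i$ with $u_i\cdot z$ read as the intersection number, which is exactly the claim; in particular the finitely many $v_i$ generate $A_*(X)$.

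For (ii), suppose $z$ is numerically trivial. Each $u_i$ that contributes in (i) has dimension complementary to $z$, so all the intersection numbers $u_i\cdot z$ vanish, and (i) forces $z=0$; hence numerical and rational equivalence coincide. Freeness is then formal: by (i) the group $A_*(X)$ is finitely generated, and it is torsion-free because any torsion class pairs to zero against every complementary class, so it is numerically trivial and therefore rationally trivial. A finitely generated torsion-free abelian group is free.

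For (iii) I would run the same argument after applying the cycle map, which is compatible with proper pushforward, flat pullback and products. Its value on $\delta$ is the homology class of the diagonal, and the hypothesis exhibits that class as $\sum_i cl_X(u_i)\otimes cl_X(v_i)$; that is, the topological diagonal is Künneth-decomposable into algebraic classes. Because the diagonal is again the identity correspondence, now on $H_*(X,\Z)$, the verbatim computation of (i)—with an arbitrary $z\in H_*(X,\Z)$ and the intersection numbers replaced by the Poincaré pairing—shows that every homology class is a $\Z$-combination of the $cl_X(v_i)$. Hence $cl_X$ is surjective and, since each $v_i$ sits in even degree, $H_{\mathrm{odd}}(X,\Z)=0$ (and dually for cohomology). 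Injectivity follows from (ii): a homologically trivial cycle pairs to zero against all classes, so it is numerically trivial, hence rationally trivial; therefore $cl_X$ is an isomorphism. The main obstacle is precisely this last part: the algebraic correspondence calculus of (i)--(ii) must be transported to singular (co)homology with \emph{integer} coefficients, which requires the exact compatibility of the cycle map with the correspondence action together with Poincaré duality and Künneth bookkeeping over $\Z$ (including a priori possible Tor terms). The payoff is that the single generation statement simultaneously gives surjectivity, the vanishing in odd degrees, and—via the freeness from (ii)—the collapse of rational, numerical and homological equivalence that upgrades surjectivity to an isomorphism.
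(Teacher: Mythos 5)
Your proof is correct and follows essentially the same route as the paper's source: the paper states this theorem without proof, citing Ellingsrud and Str\o mme \cite[Theorem 2.1]{es:num}, whose argument is precisely this identity-correspondence computation — write $\delta=\sum_i p_1^*(u_i)\cdot p_2^*(v_i)$, act on $z$ via $\gamma_*(z)=p_{2*}(p_1^*(z)\cdot\gamma)$, and use the projection formula so that only the zero-cycle terms $\deg(u_i\cdot z)$ survive. One remark: the ``main obstacle'' you flag in part (iii) is not actually an obstacle, since the topological step never needs to invert a K\"{u}nneth map (hence no Tor terms can interfere); it only uses the forward cross product applied to the classes $cl_X(u_i)$ and $cl_X(v_i)$, Poincar\'e duality on the compact manifold $X$, and the projection formula for cap products, all of which hold with $\Z$-coefficients.
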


%%%%%%%%%5

Now consider a smooth complex algebraic variety $X$ with an action
of a complex algebraic torus $T$. Together with a cycle map 
$cl_X: A^*(X)\to H^*(X,\Z)$ 
(which doubles degrees \cite[Corollary 19.2]{f:int}),  
there is also an equivariant cycle map $cl_X^T: A^*_T(X)\to H^*_T(X,\Z)$ 
where $H^*_T(X,\Z)$ denotes
equivariant cohomology with integral coefficients, 
see \cite[Section 2.8]{eg:eqint}. 
%
%If $X$ is also complete 
%and the class of the diagonal is in the image of the K\"{u}nneth map, 
%then $cl_X$ is 
%an isomorphism (Theorem \ref{es.thm} (iii)).  
Next is a version of Theorem \ref{es.thm} (iii) for $cl_X^T$.

\begin{prop}\label{equiv.cycle.smooth.complete}
Let $X$ be a smooth  
complete complex 
%algebraic variety with a $T$-action. 
$T$-variety. 
If %$X$ is complete and 
the class of the diagonal $\Delta(X)\subseteq X\times X$ is in the image of the K\"{u}nneth map $A_*(X)\otimes A_*(X)\to A_*(X\times X)$, 
then the equivariant cycle map 
$$cl_X^T: A^*_T(X)\to H^*_T(X,\Z)$$
is an isomorphism. In particular, this holds %$cl_X^T$ is an isomorphism 
if $X$ is $T$-linear. %  variety.  
\end{prop}

\begin{proof}
In view of Theorem \ref{es.thm} (iii), 
the 
given hypothesis on $X$ imply that   
$cl_X$ is an isomorphism, and that   
$X$ has no integral cohomology in odd degrees. Then  
the spectral sequence associated to the fibration 
$X\times_{T}ET\to BT$ collapses, where $ET\to BT$ is the 
universal $T$-bundle. So the $S$-module   
$H^*_T(X,\Z)$ is free, %module over $S:=H^*_T(pt)$ 
and the map 
$H^*_T(X,\Z)/\Delta H^*_T(X,\Z)\to H^*(X,\Z)$ 
is an isomorphism. 
These results, together with the graded Nakayama Lemma, yield 
surjectivity of the 
equivariant 
cycle map $cl^T_X:A^*_T(X)\to H^*_T(X,\Z)$. 
To show injectivity, we proceed as follows. First, choose a basis $z_1,..,z_n$ of $H^*(X,\Z)$. Now 
identify that basis with a basis of $A^*(X)$ (via $cl_X$) and lift it to a generating system of 
the $S$-module $A^*_T(X)$. Then this generating system is a basis, since its image
under the equivariant cycle map is a basis of $H^*_T(X,\Z)$.

For the last assertion of the proposition, simply recall that 
if $X$ is $T$-linear, then the  
K\"{u}nneth map is an isomorphism \cite[Proposition 1]{to:linear}. %; 
%in particular, the class of $\Delta(X)$ is in the image of the K\"{u}nneth map.  
\end{proof}

\subsection{Equivariant Localization} 
Let $T$ be an algebraic torus. 
The following is the localization theorem for equivariant Chow groups \cite[Corollary 2.3.2]{bri:eqchow}.  

\begin{thm}\label{brion.loc.thm}
Let $X$ be a $T$-scheme.  
%and let $i_T:X^T\to X$ be the inclusion of the fixed point subscheme. 
If $X$ is locally linearizable, 
then 
the $S$-linear map $i_{T*}:A^T_*(X^T)\to A^T_*(X)$ is an 
isomorphism after inverting all non-zero elements of $\Delta$.  \hfill $\square$
\end{thm}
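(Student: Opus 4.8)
The plan is to reduce the statement to a vanishing assertion on the complement of the fixed locus. Let $R$ denote the localization of $S$ obtained by inverting all nonzero elements of $\Delta$; since $R$ is a localization of $S$ it is flat, so $-\otimes_S R$ is exact. Put $U=X\setminus X^T$, a $T$-invariant open subscheme which is again locally linearizable and satisfies $U^T=\emptyset$. By Sumihiro's theorem \cite{su:eq}, $X$ is covered by $T$-invariant affine, hence quasiprojective, opens, so the localization long exact sequence of equivariant higher Chow groups applies to the invariant closed inclusion $i_T\colon X^T\hookrightarrow X$ with complement $U$:
$$\cdots \to A^T_p(U,1)\xrightarrow{\partial} A^T_p(X^T)\xrightarrow{i_{T*}} A^T_p(X)\to A^T_p(U)\to 0.$$
Applying $-\otimes_S R$ preserves exactness, so it suffices to prove $A^T_*(U,m)\otimes_S R=0$ for $m=0,1$. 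Vanishing of $A^T_*(U)\otimes_S R$ gives surjectivity of $i_{T*}\otimes R$, while vanishing of $A^T_*(U,1)\otimes_S R$ forces $\partial\otimes R=0$, hence injectivity.

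The heart of the matter is thus the vanishing lemma: if $T$ acts on a locally linearizable scheme $U$ with $U^T=\emptyset$, then $A^T_*(U,m)\otimes_S R=0$ for all $m$. I would prove it by Noetherian induction, stratifying $U$ into finitely many locally closed $T$-invariant strata of constant orbit type; each such stratum fibers over its orbit space with the identity component of the stabilizer constant, and repeated application of the (exact, after $\otimes_S R$) localization sequence reduces the claim to the case of a single orbit $U=T/T_0$. Here $T_0$ is a closed subgroup of $T$ that is \emph{proper}, because the absence of fixed points means every stabilizer differs from $T$. For the homogeneous space $T/T_0$ one has $A^T_*(T/T_0,m)\cong A^{T_0}_*(\mathrm{pt},m)$, and under this identification the $S$-module structure is induced by the restriction of characters $S=\mathrm{Sym}(\Delta)\to \mathrm{Sym}\bigl(X^*(T_0)\bigr)$.

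To finish, I would use that $T_0\neq T$. The only closed subgroup of the torus $T$ of full dimension is $T$ itself, so $\dim T_0<\dim T$ and the character restriction $\Delta=X^*(T)\to X^*(T_0)$ has kernel of positive rank. Pick a nonzero $\chi\in\Delta$ in this kernel. Through the module structure above, $\chi$ acts as the zero endomorphism of $A^T_*(T/T_0,m)$; since $\chi$ is invertible in $R$, this forces $A^T_*(T/T_0,m)\otimes_S R=0$. The single-orbit vanishing together with the induction proves the lemma, and with it the theorem.

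The main obstacle is the vanishing lemma, and two of its ingredients require genuine care. First, one needs a bona fide $T$-equivariant orbit-type stratification into finitely many invariant locally closed pieces of constant stabilizer type, compatible with the quasiprojective covering furnished by Sumihiro's theorem so that the higher Chow localization sequence is available at each inductive step. Second, and most delicately, the identification $A^T_*(T/T_0,m)\cong A^{T_0}_*(\mathrm{pt},m)$ must be established \emph{together} with the precise claim that its $S$-module structure factors through $\mathrm{Sym}(X^*(T_0))$, for it is exactly this factorization---combined with $T_0\subsetneq T$---that supplies a character annihilating the module after localization. The injectivity half of the theorem is what compels the use of higher Chow groups in place of the naive right-exact sequence, and is therefore the part most sensitive to these hypotheses.
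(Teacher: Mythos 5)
Your route is genuinely different from the paper's: the paper takes Theorem \ref{brion.loc.thm} directly from \cite[Corollary 2.3.2]{bri:eqchow}, and its own generalization (Proposition \ref{genloc.prop}) is proved by Brion's cycle-theoretic method --- the presentation of $A^T_*(X)$ by invariant cycles and eigenfunction relations (Theorem \ref{Tequiv.thm}), Lemma \ref{technical.lem} on the ideal of the fixed locus, and induction on the dimension of invariant subvarieties. You instead argue in the Thomason style: localization long exact sequence plus a vanishing lemma on the fixed-point-free locus. That strategy can be made to work, and your single-orbit computation (Morita isomorphism, module structure through restriction of characters, a nonzero $\chi$ trivial on $T_0\subsetneq T$ killing the module) is correct. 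However, as written there are two genuine gaps. The first is the quasiprojectivity hypothesis: in the paper's framework (following Bloch and Edidin--Graham), equivariant higher Chow groups and their localization long exact sequence are only available for $T$-\emph{quasiprojective} schemes, whereas the theorem assumes only that $X$ is locally linearizable. Your justification --- Sumihiro's theorem gives a cover by invariant affine opens, ``hence'' the sequence applies --- is a non-sequitur: a cover by quasiprojective invariant opens does not make $X$ itself $T$-quasiprojective (a complete non-projective toric variety is locally linearizable but admits no ample $T$-linearized sheaf), and the exact sequence is a global statement about $X$. This matters precisely for the injectivity half, which is the half your higher Chow groups are there to supply; it could be repaired by proving the quasiprojective case first and then passing to general locally linearizable $X$ by the equivariant envelope/Noetherian induction argument the paper itself uses at the end of the proof of Proposition \ref{genloc.prop}, but that step is absent.

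The second gap is the claim that ``repeated application of the localization sequence reduces the claim to the case of a single orbit $U=T/T_0$.'' A stratum of constant orbit type is in general an infinite family of orbits, not a finite union of them, so no finite chain of localization sequences terminates in single orbits; only finitely many \emph{orbit types} occur, not finitely many orbits. What is needed to close the induction is a generic slice theorem for diagonalizable groups: a constant-stabilizer stratum $W$ contains a dense invariant open $V\cong (T/T_0)\times Z$, with $T$ acting by translations on the first factor and trivially on $Z$; then $A^T_*(V,m)\cong A^{T_0}_*(Z,m)$ with $S$-module structure factoring through ${\rm Sym}(X^*(T_0))$, your chosen $\chi$ kills it, and the complement $W\setminus V$ (of smaller dimension) is handled by Noetherian induction. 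Without this slice statement --- or an equivalent argument showing that $\chi$ acts nilpotently, rather than by zero, on a whole constant-stabilizer stratum, since there $\chi$ acts by $c_1$ of a possibly nontrivial line bundle --- the inductive scheme does not close, and this is exactly the point where the orbit-by-orbit intuition breaks down.
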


For later use, 
%we record here a slightly more general version of the previous 
%localization theorem. 
we prove a slightly more general statement. 

\begin{prop}\label{genloc.prop}
Let $X$ be a $T$-scheme, and let $H\subset T$ be a closed subgroup. 
%, and 
%let  % closed subgroup of  
%$i_H:X^H\to X$ be the inclusion of the fixed point subscheme.
Then the $S$-linear map  
$i_{H*}:A^T_*(X^H)\to A^T_*(X)$ 
becomes an isomorphism after inverting finitely many characters of $T$
that restrict non-trivially to $H$.  
\end{prop}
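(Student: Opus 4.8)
The plan is to reduce the statement for a general closed subgroup $H\subset T$ to the already-established torus localization theorem (Theorem \ref{brion.loc.thm}) by realizing $X^H$ as the fixed locus of a suitable torus action and tracking the finitely many characters that intervene. First I would observe that $X^H$ is itself a $T$-scheme (since $H$ is normal in the abelian group $T$, the $T$-action preserves $X^H$), so $i_{H*}:A^T_*(X^H)\to A^T_*(X)$ is an honest $S$-linear map and the question of inverting characters makes sense. The key structural input is that $X^H$ is locally linearizable whenever $X$ is, so both sides admit the machinery of equivariant localization.

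The central idea is to filter the inclusion $X^T\subseteq X^H\subseteq X$ and use that both $i_{T*}:A^T_*(X^T)\to A^T_*(X)$ and the composite $i_{T,H*}:A^T_*(X^T)\to A^T_*(X^H)$ become isomorphisms after inverting \emph{all} of $\Delta$, by applying Theorem \ref{brion.loc.thm} to $X$ and to $X^H$ respectively. From the commutative triangle $i_{H*}\circ i_{T,H*}=i_{T*}$ it follows formally that $i_{H*}$ is an isomorphism after inverting all of $\Delta$. The real content is to cut down from all of $\Delta$ to a \emph{finite} set of characters that restrict non-trivially to $H$. For this I would analyze the localization more carefully: the cokernel and kernel of $i_{H*}$ are supported on $X\setminus X^H$, which decomposes (after choosing a generic one-parameter subgroup of $T$, or directly by the method of Brion's proof of Theorem \ref{brion.loc.thm}) into finitely many $T$-invariant pieces whose isotropy groups meet $H$ only in proper subgroups. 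On each such piece the relevant classes are annihilated by a single character $\chi$ of $T$ whose restriction $\chi|_H$ is non-trivial, coming from the weight of the normal directions. Collecting these finitely many $\chi$ across the finitely many strata gives the desired finite set.

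Concretely, the steps in order are: (1) verify $X^H$ is a locally linearizable $T$-scheme and set up the triangle of pushforwards; (2) run the torus localization argument relative to $H$, replacing the full quotient field $\mathcal{Q}$ of $S$ by the localization of $S$ at the multiplicative set generated by the finitely many characters appearing as normal weights along $X\setminus X^H$; (3) check that each such normal weight $\chi$ indeed satisfies $\chi|_H\neq 0$, which is exactly the statement that the corresponding direction is not $H$-fixed. I expect step (3)—ensuring the weights one inverts all restrict non-trivially to $H$ rather than merely being non-zero on $T$—to be the main obstacle, since it requires pinning down the normal weights to $X^H$ (as opposed to $X^T$) and verifying they are genuinely detected by $H$; this is precisely where the hypothesis forces us to invert characters non-trivial on $H$ and not just arbitrary elements of $\Delta$. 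Finiteness of the set then follows from the finiteness of the stratification, itself a consequence of local linearizability and the existence of a Biał ynicki-Birula-type filtration.
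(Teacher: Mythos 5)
Your plan correctly identifies that one must adapt Brion's localization argument and that the crux is producing finitely many characters, non-trivial on $H$, governing the localization; but the step you defer as ``the main obstacle'' is precisely the missing idea, and your proposed substitute for it would fail. For a general (singular, possibly non-reduced) $T$-scheme there is no normal bundle along $X^H$, so ``the weight of the normal directions'' is not a usable notion. The paper replaces it by a purely ideal-theoretic statement (Lemma \ref{technical.lem}): on each $T$-stable affine chart $X_i$ of a locally linearizable $X$, the ideal of $X_i^H$ is generated by regular functions that are $T$-eigenvectors whose weights restrict non-trivially to $H$; this holds because the ideal of $X^H$ is the smallest $H$-stable ideal with $H$-trivial quotient, and the subideal generated by such eigenfunctions already has this property. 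Choosing finitely many generators $f_{ij}$, with weights $\chi_{ij}$, gives the finite set of characters to invert. Moreover, classes are not ``annihilated'' by these characters: the actual mechanism is the relation $\chi_{ij}[Y]=[{\rm div}_Y(f_{ij})]$ from Theorem \ref{Tequiv.thm}, applied to any $T$-invariant subvariety $Y$ not pointwise fixed by $H$ (some $f_{ij}$ is then a non-zero rational eigenfunction on $Y$); after inverting $\chi_{ij}$, the class $[Y]$ becomes a combination of classes of strictly smaller dimension, and surjectivity of the localized $i_{H*}$ follows by induction on dimension. Your assertion that the \emph{kernel} of $i_{H*}$ is ``supported on $X\setminus X^H$'' has no meaning for Chow groups --- the localization exact sequence controls only the cokernel --- so injectivity does not come for free; it requires re-running Brion's separate injectivity argument with the same characters $\chi_{ij}$.

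Two further gaps. First, the proposition does not assume $X$ locally linearizable, while your argument (like Theorem \ref{brion.loc.thm}) needs that hypothesis; the paper treats the general case by passing to the disjoint union of the normalizations of the irreducible components, which is a $T$-equivariant birational envelope, and combining the exact sequences of \cite[Lemma 2]{f:sph} and \cite[Lemma 7.2]{eg:cycles} with Noetherian induction. Your proposal has no counterpart for this reduction. Second, your opening ``triangle'' reduction through $X^T$ cannot be refined into a proof: the leg $A^T_*(X^T)\to A^T_*(X^H)$ becomes an isomorphism only after inverting characters that restrict \emph{trivially} to $H$ (every $T$-weight occurring on $k[X_i^H]$ is trivial on $H$, since $H$ acts trivially on $X^H$), so no formal diagram chase starting from Theorem \ref{brion.loc.thm} applied to $X$ and to $X^H$ can isolate the characters the proposition demands. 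Likewise, the appeal to a generic one-parameter subgroup and a Bia\l ynicki-Birula-type stratification is a red herring: such decompositions are tied to $X^T$ rather than $X^H$, need not exist (or cover $X$) for non-complete schemes, and play no role in the correct argument.
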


Before proving this proposition, we need a technical lemma.  
We would like to thank M. Brion for suggesting  
a simplified proof of this fact.    

\begin{lem} \label{technical.lem}
Let $X$ be an affine $T$-scheme. 
Let $H$ be a closed subgroup of $T$. 
Then the ideal of the fixed point subscheme $X^H$ 
is generated by all regular functions on $X$ which are 
eigenvectors of $T$ with a weight that restricts non-trivially to $H$.   
\end{lem}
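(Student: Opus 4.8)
The plan is to pass to coordinate rings and read everything off the weight decomposition of the action. Write $A=\k[X]$ for the coordinate ring of the affine $T$-scheme $X$ and decompose it into $T$-weight spaces $A=\bigoplus_{\chi\in\Delta}A_\chi$, where $A_\chi=\{f\in A:\ t\cdot f=\chi(t)f\}$; this is a genuine direct-sum decomposition because a torus is linearly reductive. Every $T$-eigenvector $f\in A_\chi$ is automatically an $H$-eigenvector of $H$-weight $\chi|_H$, and this $H$-weight is nontrivial precisely when $\chi|_H\neq 0$, i.e. when $\chi$ restricts non-trivially to $H$. I would first record the elementary observation that the ideal $J$ generated by the stated $T$-eigenvectors is unchanged if one grades instead by $H$: since $H$ is itself diagonalizable, $A$ also splits as $A=\bigoplus_\psi A^{(\psi)}$ into $H$-weight spaces, and $A^{(\psi)}=\bigoplus_{\chi|_H=\psi}A_\chi$, whence $\bigoplus_{\psi\neq 0}A^{(\psi)}=\bigoplus_{\chi:\,\chi|_H\neq 0}A_\chi$. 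Thus $J$ coincides with the ideal generated by all $H$-eigenvectors of nonzero $H$-weight, and the statement is reduced to describing the ideal of $X^H$ through the $H$-grading alone.

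The heart of the argument is then to identify the ideal $I(X^H)$ of the fixed point subscheme with $J$, and I would do this through the functor of points. A morphism ${\rm Spec}(R)\to X$, corresponding to a $\k$-algebra homomorphism $\phi\colon A\to R$, factors through $X^H$ exactly when it is $H$-invariant for the trivial $H$-action on ${\rm Spec}(R)$, i.e. when the composite $A\to A\otimes_\k\k[H]\xrightarrow{\phi\otimes\mathrm{id}}R\otimes_\k\k[H]$ (through the coaction) agrees with $A\xrightarrow{\phi}R\to R\otimes_\k\k[H]$ (through $r\mapsto r\otimes 1$). On an $H$-eigenvector $f\in A^{(\psi)}$ the coaction reads $f\mapsto f\otimes\psi$, so invariance forces $\phi(f)\otimes\psi=\phi(f)\otimes 1$ in $R\otimes_\k\k[H]$. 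Since $H$ is diagonalizable, $\k[H]$ is the group algebra of its character group and the characters form an $R$-basis of $R\otimes_\k\k[H]$; hence for $\psi\neq 0$ this equality forces $\phi(f)=0$, while for $\psi=0$ it is automatic. Therefore $\phi$ is $H$-invariant if and only if it annihilates every nonzero-weight eigenvector, i.e. if and only if it factors through $A/J$. This exhibits $A/J$ as representing the fixed-point functor, so $I(X^H)=J$, and combined with the first paragraph this is exactly the assertion of the lemma.

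The step demanding the most care is the passage from the set-theoretic to the scheme-theoretic description of $X^H$, and in particular ensuring the coaction argument is valid in arbitrary characteristic. The crucial input is that a closed subgroup of a torus is of multiplicative type, hence diagonalizable and linearly reductive in \emph{every} characteristic; this is precisely what guarantees both that the weight decompositions of $A$ are honest direct sums and that $\k[H]$ is free on the characters of $H$, so that the equality $\phi(f)\otimes\psi=\phi(f)\otimes 1$ can be read off coordinatewise. Once this is in place the identification $I(X^H)=J$ is forced and no further geometric input is needed; the only point to double-check is that the same reasoning survives for the possibly infinitesimal or non-reduced part of $H$ in positive characteristic, which it does exactly because linear reductivity of diagonalizable groups holds with no smoothness hypothesis.
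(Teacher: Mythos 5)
Your proof is correct, and it takes a genuinely different route from the paper's. The paper characterizes $X^H$ as the largest closed subscheme on which $H$ acts trivially, so that its ideal $I$ is the \emph{smallest} $H$-stable ideal of $\k[X]$ with trivial quotient action; it then shows $J\subseteq I$ by evaluating at a closed point $h_0\in H$ with $\chi(h_0)\neq 1$ (which kills the image $\overline{f}$ of any eigenvector $f$ of weight $\chi$ restricting non-trivially), and concludes $I=J$ by minimality, since $\k[X]/J$ is a trivial $H$-module by construction. You instead work with the functor of points and the coaction $A\to A\otimes_\k \k[H]$, showing directly that $A/J$ represents the fixed-point functor; the identification with the paper's ``largest trivially-acted-on closed subscheme'' is automatic, since both descriptions single out the same closed subscheme once representability is in hand. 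What each approach buys: the paper's argument is shorter and avoids Hopf-algebra formalism, but its key step --- producing $h_0\in H$ with $\chi(h_0)\neq 1$ --- tacitly assumes that non-triviality of $\chi|_H$ is detected on closed points, i.e.\ that $H$ is reduced; this is harmless for the paper's applications (subtori, and characteristic zero in Section 4, where all group schemes are reduced). Your coaction argument never mentions points of $H$ and, as you note, applies verbatim to non-reduced closed subgroup schemes such as $\mu_p$ in characteristic $p$ (where, e.g., for $\mathbb{G}_m$ acting on $\mathbb{A}^1$ with weight $1$, the points-only reading would wrongly give $(\mathbb{A}^1)^{\mu_p}=\mathbb{A}^1$ instead of the origin), so your version of the lemma is strictly more general. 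Both proofs rest on the same underlying structure, namely the $T$-weight decomposition of $\k[X]$ and its compatibility with restriction of characters to $H$.
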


\begin{proof}%simplified proof
Recall that $X^H$ is the largest closed
subscheme of $X$ on which $H$ acts trivially. In other words,
the ideal $I$ of $X^H$ is the smallest $H$-stable ideal of
$k[X]$ such that $H$ acts trivially on the quotient $k[X]/I$.
So $I$ is $T$-stable and hence the direct sum of its
$T$-eigenspaces. Moreover, if $f \in k[X]$ is a $T$-eigenvector
of weight $\chi$ which restricts non-trivially to $H$, then
$f \in I$. 
Indeed,  
let $\overline{f}$ be the image of $f$ in 
$k[X]/I$. 
Notice that $\overline{f}$ is a $T$-eigenvector of the same weight $\chi$ as $f$. %(as a simple check shows), 
Since $H$ acts trivially on $k[X]/I$, 
we obtain the identity  
$\overline{f}=h\cdot \overline{f}=\chi(h)\overline{f}$, 
valid for all $h\in H$. %holds. 
Nevertheless,  
there exists $h_0\in H$ such that $\chi(h_0)\neq 1$, 
by our assumption on $\chi$. 
Substituting this information into the above identity yields %, for some $h_0\in H$, % restricts non-trivially to $H$, %on the quotient, 
$\overline{f}=0$, equivalently, $f\in I$.  %, as one can easily check). 
Thus, $I$ contains the ideal $J$ generated
by all such functions $f$. 
But $k[X]/J$ is a trivial $H$-module by construction, 
and hence $I = J$ by minimality.
\end{proof}

\begin{proof}[Proof of Proposition \ref{genloc.prop}]
In virtue of Lemma \ref{technical.lem}, 
the proof 
is an easy adaptation %, almost word for word, 
of Brion's proof of 
\cite[Corollary 2.3.2]{bri:eqchow}, 
so we provide only a sketch of the crucial points. 
First, assume that $X$ is locally linearizable, i.e. 
$X$ is a finite union 
of $T$-stable affine open subsets $X_i$. 
Lemma 
\ref{technical.lem} implies that the ideal of each 
fixed point subscheme $X^H_i$ is generated 
by all regular functions on $X_i$ which are 
eigenvectors of $T$ with a weight that restricts 
non-trivially to $H$. Choose a finite set 
of such generators $(f_{ij})$, with respective 
weights $\chi_{ij}$. 
From Theorem \ref{Tequiv.thm} we know that the $S$-module $A^T_*(X)$ is 
generated by the classes of $T$-invariant subvarieties of $X$. 
%Moreover, by assumption, 
%
Now let $Y\subset X$ be a $T$-invariant subvariety of positive dimension. 
If $Y$ is not fixed pointwise by $H$, then one of the $f_{ij}$ 
defines a non-zero rational function on $Y$.  
Then, in the Chow group, we have $\chi_{ij}[Y]=[{\rm div}_Y{f_{ij}}]$. 
So after inverting $\chi_{ij}$, we get $[Y]=\chi_{ij}^{-1}[{\rm div}_Y{f_{ij}}]$. 
Arguing by induction on the dimension of $Y$, we obtain 
that $i_*$ becomes surjective after inverting the $\chi_{ij}$'s. 
A similar argument, using these $\chi_{ij}$'s 
in the proof of \cite[Corollary 2.3.2]{bri:eqchow}, 
shows that 
%Similarly, one checks that 
$i^*$ is injective after localization. %cf. \cite[Corollary 2.3.2]{bri:eqchow}. 

\smallskip

Finally, if $X$ is not locally linearizable, choose 
an equivariant birational envelope $\pi:\tilde{X}\to X$, 
where $\tilde{X}$ is normal (and possibly not irreducible).  
%be the disjoint union of the normalizations of the 
%irreducible components of $X$. 
%Then $\pi$ is an equivariant birational envelope.  
Let $U\subset X$ be the open subset where $\pi$ is 
an isomorphism.  
Set $Z=X\setminus U$ and $E=\pi^{-1}(Z)$. 
Then, by \cite[Lemma 2]{f:sph} 
and \cite[Lemma 7.2]{eg:cycles}, there is a commutative diagram 
$$
\xymatrix{
A^T_*(E^H)\ar[r]\ar[d]^{i_{H*}}& A^T_*(Z^H)\oplus A^T_*(\tilde{X}^H) \ar[r]\ar[d]^{i_{H*}}& A^T_*(X^H) \ar[r]\ar[d]^{i_{H*}}& 0\\
A^T_*(E)\ar[r]& A^T_*(Z)\oplus A^T_*(\tilde{X}) \ar[r]& A^T_*(X) \ar[r]& 0.
}
$$
Observe that $E$ and $Z$ have strictly smaller dimension than $X$. 
Moreover, $E$ and $\tilde{X}$ are locally linearizable. Applying Noetherian 
induction and the previous part of the proof, we get that   
the first two left vertical maps become isomorphisms 
after localization; hence so does the third one.    
\end{proof}

%The interested reader is invited to consult \cite{eg:eqint} and \cite{bloch:higher} 
%for more information 
%on equivariant and ordinary higher Chow groups. 

%%%%%%%%%%%%%%%%%%%%%%%%%%%%%%%%%  The first main part   %%%%%%%%%%%%%%%%%%%%%%%%%%%%%%%%%%%%%%%%%%%%%%%%%%%%%%%%%%%%%%%%%%%%%%%%%%

\section{Equivariant  
Kronecker duality and Localization}

\subsection{Equivariant Kronecker duality schemes}

\begin{dfn}\label{kro.dfn}
Let $X$ be a complete $T$-scheme. We say that $X$ satisfies 
{\em $T$-equivariant Kronecker duality} %(or Kronecker duality for short) 
if the following conditions hold:
\begin{enumerate}[(i)]
 \item $A^T_*(X)$ is a finitely generated $S$-module.
 \item The equivariant Kronecker duality map 
$$\mathcal{K}_T:A^*_T(X)\longrightarrow \Hom_{S}(A_*^T(X),S) \hspace{1cm} \alpha\mapsto (\beta\mapsto p_{X*}{(\beta\cap \alpha)})$$
is an isomorphism of $S$-modules. 
 \end{enumerate}
\end{dfn}

%In the same vein, 
Likewise, 
we say that $X$ satisfies {\em rational} $T$-equivariant Kronecker duality  
if 
%$A^T_*(X)_\Q$ is a finitely generated $S_\Q$-module and 
%the rational equivariant Kronecker duality map  
%$$\mathcal{K}_T:A^*_T(X)_\Q\longrightarrow Hom_{S_\Q}(A_*^T(X)_\Q,A^T_*(pt)_\Q)$$
%is an isomorphism of $S_\Q$-modules.   
the $S_\Q$-modules $A^T_*(X)_\Q$ and $A^*_T(X)_\Q$ 
satisfy the conditions (i) and (ii) with rational coefficients.

\begin{rem}\label{krfunct.rem} 
The equivariant Kronecker duality map 
is functorial for morphisms between complete $T$-schemes. 
Indeed,  
let $f:\tilde{X}\to X$ be an equivariant (proper) morphism of complete $T$-schemes.   
For any $\xi \in A^*_T(X)$, we have   
$$
\int_{\tilde{X}}f^*(\xi)\cap z=\int_{X}f_*(f^*(\xi)\cap z)=\int_{X}(\xi\cap f_*(z)), 
$$
due to the projection formula \cite{f:int}. 
%We claim that this identity 
This identity  
implies the commutativity 
of the diagram
$$
\xymatrix{
A^*_T(X)         \ar[rr]^{f^*}\ar[d]_{\mathcal{K}_T} & & A^*_T(\tilde{X}) \ar[d]^{\mathcal{K}_T}\\
{\rm Hom}_S(A^T_*(X),S)    \ar[rr]^{(f_*)^t}     &     & {\rm Hom}_S(A^T_*(\tilde{X}),S),
}
$$
where $(f_*)^t$ is the transpose of $f_*:A^T_*(\tilde{X})\to A^T_*(X)$. 
%Indeed, at the level of elements, we have   
%$$
%\xymatrix{
%\alpha \ar@{|->}[r]\ar@{|->}[d] &p^*(\alpha) \ar@{|->}[d]\\
%\{\beta\mapsto \int_X \alpha\cap \beta\} \ar@{|->}[r]& \{\theta\mapsto \int_X \alpha\cap p_*(\theta)=\int_X p_*(p^*(\alpha)\cap \theta)=\int_{\tilde{X}}p^*(\alpha)\cap \theta\},\\
%}
%$$
%and the claim follows.  
\end{rem}

\smallskip

It follows from Definition \ref{kro.dfn} that if 
$X$ satisfies $T$-equivariant Kronecker duality, 
then 
the $S$-module
$A^*_T(X)$ is finitely generated and torsion free. 
In particular, if $T$ is one dimensional, i.e. $T=\G$, 
then 
%$\G$-equivariant Kronecker duality implies 
%that 
$A^*_{\G}(X)$ is a finitely generated 
free module over $A^*_{\G}=\Z[t]$. 
Moreover,    
%If moreover 
if 
$X$ is projective and smooth, then 
$A_*(X^{\G})$ is a finitely generated free abelian group 
(Lemma \ref{freeness.fix.global.lem}). 

\smallskip

As it stems from the 
previous paragraph, not all smooth varieties with a torus action satisfy 
Equivariant Kronecker duality. For a more concrete example, 
consider  
the trivial action of $T$ on a projective smooth curve. 
In this case, one checks that $\mathcal{K}_T$ is an extension of the 
non-equivariant Kronecker duality map $\mathcal{K}$. But, as pointed 
out in \cite{f:sph}, the kernel of $\mathcal{K}$ in degree one 
is the Jacobian of the curve, which is non-trivial if the curve 
has positive genus.

\begin{lem}\label{smooth.kro.comparison.lem}
Let $X$ be a smooth complete $T$-variety. Then $X$ satisfies 
rational $T$-equivariant Kronecker duality if and only if 
it satisfies the rational non-equivariant Kronecker duality, i.e. 
$\mathcal{K}:A^i(X)_\Q\to \Hom(A_i(X),\Q)$ is an isomorphism for all $i$. 
If, moreover, $X$ is projective and $A_*(X^T)$ is $\Z$-free, then the equivalence holds over 
the integers.   
\end{lem}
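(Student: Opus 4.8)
The plan is to reduce everything to a statement about free graded modules over $S_\Q$ and then invoke the graded Nakayama lemma, the key point being that the non-equivariant map $\mathcal{K}$ is obtained from the equivariant map $\mathcal{K}_T$ by reduction modulo the augmentation ideal $\Delta\cdot S$. Since $X$ is smooth, Poincaré duality (property (iv), \cite[Proposition 4]{eg:eqint}) identifies ${\rm op}A^*_T(X)$ with $A^T_*(X)$ and ${\rm op}A^*(X)$ with $A_*(X)$, so that both $\mathcal{K}_T$ and $\mathcal{K}$ are pairings built from the cap product and the pushforward to a point. First I would record the two structural facts used throughout: the $S_\Q$-module $A^T_*(X)_\Q$ is \emph{free} (\cite[Corollary 3.2.1]{bri:eqchow}), and reduction modulo $\Delta$ recovers the non-equivariant groups, $A^T_*(X)_\Q/\Delta A^T_*(X)_\Q\cong A_*(X)_\Q$, by Theorem \ref{Tequiv.thm}.

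Next I would argue that, in either direction of the equivalence, the source and target of $\mathcal{K}_T$ are free $S_\Q$-modules of the \emph{same finite rank}. If $X$ satisfies rational $T$-equivariant Kronecker duality, then condition (i) says $A^T_*(X)_\Q$ is finitely generated, and being free it has finite rank $r=\dim_\Q A_*(X)_\Q$; its $S_\Q$-dual $\Hom_{S_\Q}(A^T_*(X)_\Q,A^T_{*,\Q})$ is then free of the same rank. Conversely, if $\mathcal{K}$ is an isomorphism in every degree, then the isomorphisms $A^i(X)_\Q\cong\Hom(A_i(X)_\Q,\Q)$ together with $A^i(X)=A_{n-i}(X)$ (Poincaré duality, $n=\dim X$) force each $A_i(X)_\Q$ to be isomorphic to its double dual, hence finite-dimensional; thus $A_*(X)_\Q$ is finite-dimensional and $A^T_*(X)_\Q$ is free of finite rank, so condition (i) holds automatically.

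The crux is to check that reducing $\mathcal{K}_T$ modulo $\Delta$ yields exactly $\mathcal{K}$. On the source this is the isomorphism ${\rm op}A^*_T(X)_\Q/\Delta\cong{\rm op}A^*(X)_\Q$. On the target, because $A^T_*(X)_\Q$ is free of finite rank, $\Hom$ commutes with the base change $-\otimes_{S_\Q}\Q$, giving $\Hom_{S_\Q}(A^T_*(X)_\Q,S_\Q)\otimes_{S_\Q}\Q\cong\Hom_\Q(A_*(X)_\Q,\Q)$. It then remains to see that the defining assignment $\beta\mapsto p_{X*}(\beta\cap\alpha)$ specializes to $\beta\mapsto\deg(\beta\cap\alpha)$; this is precisely the compatibility of equivariant cap products and pushforward-to-a-point with restriction to a fiber of $p_{X,T}:X_T\to U/T$, and it is the step I expect to require the most care, since it is where the equivariant formalism of \cite{eg:eqint} must be matched with the non-equivariant one of \cite{f:int}. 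Granting this, one has $\mathcal{K}_T\otimes_{S_\Q}\Q=\mathcal{K}$.

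Finally I would conclude by graded Nakayama: $\mathcal{K}_T$ is a degree-preserving map of free $S_\Q$-modules of equal finite rank whose reduction modulo the graded maximal ideal is $\mathcal{K}$, so $\mathcal{K}_T$ is an isomorphism if and only if $\mathcal{K}$ is, using that a surjection between free modules of equal finite rank over the domain $S_\Q$ is automatically an isomorphism. For the integral refinement, the hypothesis that $A_*(X^T)$ is $\Z$-free makes $A^T_*(X)$ an $S$-free module and $A_*(X)$ a $\Z$-free group by Lemma \ref{freeness.fix.global.lem}; since $S$ is again a graded domain with $S/\Delta S=\Z$, the same Nakayama-and-rank argument runs verbatim over $\Z$, the finiteness of the rank being forced, when $\mathcal{K}$ is assumed, by the fact that an infinite-rank free abelian group is never reflexive. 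This gives the equivalence over the integers.
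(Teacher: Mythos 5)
Your main argument is the same as the paper's: identify ${\rm op}A^*_T(X)$ with $A^*_T(X)$ via \cite[Proposition 4]{eg:eqint}, use freeness of $A^T_*(X)$ (automatic over $\Q$ by \cite[Corollary 3.2.1]{bri:eqchow}, given integrally by Lemma \ref{freeness.fix.global.lem}), reduce modulo $\Delta$ --- which turns the source into $A^*(X)$ and, by base change of $\Hom$ along $S\to\Z$ using freeness, turns the target into ${\rm Hom}(A_*(X),\Z)$ --- identify the reduction of $\mathcal{K}_T$ with $\mathcal{K}$ via the commutative comparison square, and conclude by graded Nakayama plus a rank count. The paper asserts the commutativity of that square exactly as you do, without further detail, so your ``crux'' step is on par with the paper's own proof. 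You go beyond the paper in one respect: you explicitly verify condition (i) of the definition (finite generation of $A^T_*(X)$) in the direction where only $\mathcal{K}$ is assumed; the paper is silent on this point, and your rational argument for it (a $\Q$-vector space abstractly isomorphic to its double dual is finite-dimensional, by Erd\H{o}s--Kaplansky) is correct.

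However, your justification of finite rank in the integral case is false: it is not true that an infinite-rank free abelian group is never reflexive. By Specker's theorem, ${\rm Hom}(\Z^{\mathbb{N}},\Z)$ is free on the coordinate projections, so the countable direct sum $\Z^{(\mathbb{N})}$ \emph{is} reflexive (the canonical map into the double dual carries the standard basis onto the projections); more generally, free abelian groups of non-measurable rank are reflexive. The step can be repaired using the freeness hypothesis instead: by Lemma \ref{freeness.fix.global.lem}, $A_*(X)$ is $\Z$-free, and if some $A_i(X)$ had infinite rank $\kappa$, then $\mathcal{K}$ together with Poincar\'e duality would give $A_{n-i}(X)\cong{\rm Hom}(A_i(X),\Z)\cong\Z^{\kappa}$, which is not free for infinite $\kappa$ by Baer's theorem (it contains the non-free Baer--Specker group $\Z^{\mathbb{N}}$ as a subgroup, and subgroups of free abelian groups are free), contradicting the freeness of $A_{n-i}(X)$. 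With that substitution your proof is complete and matches the paper's.
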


\begin{proof} 
Both assertions are proved similarly, 
%interchanging Lemma \ref{freeness.fix.global.lem} with  
%Remark \ref{smth.comp.free.rem} 
%at appropriate places,  
so we focus on the second one. 
Since $X$ is smooth and projective, the 
assumption on $A_*(X^T)$ implies 
that $A^T_*(X)$ is a free $S$-module (Lemma \ref{freeness.fix.global.lem}; cf. Theorem  \ref{smth.comp.free.rem}). 
Now, by Poincar\'e duality \cite[Proposition 4]{eg:eqint},  
$A^*_T(X)$ is isomorphic to $A^T_*(X)$;    
so it is also a free $S$-module. 
By the graded Nakayama lemma, 
$\mathcal{K}_T$ is an isomorphism 
if and only if 
$$
\overline{\mathcal{K}_T}: A^*_T(X)/\Delta A^*_T(X) \to {\rm Hom}_S(A^T_*(X),S)/\Delta {\rm Hom}_S(A^T_*(X),S)
$$
is an isomorphism. But
freeness of $A^T_*(X)$ 
yields an isomorphism
$$
{\rm Hom}_S(A^T_*(X),S)/\Delta {\rm Hom}_S(A^T_*(X),S)
\simeq {\rm Hom}(A^T_*(X)/\Delta A^T_*(X),\Z)
$$ 
and the later identifies to 
${\rm Hom}(A_*(X),\Z)$, by Theorem \ref{Tequiv.thm}. 
On the other hand, by Theorem \ref{Tequiv.thm} again, 
the map 
$
A^*_T(X)/\Delta A^*_T(X)\to A^*(X) 
$
is an isomorphism. 
These facts, together with the commutativity of the diagram  below 
$$\xymatrix{
A^*_T(X) \ar[rr]^{\mathcal{K}_T\;\;\;\;\;\;\;\;}\ar[d]   &  &{\rm Hom}_S(A^T_*(X),S)\ar[d] \\
%A^*(X)=A^*_T(X)/MA^*_T(X)   \ar[r]^{\mathcal{K}\;\;\;\;\;\;\;\;\;\;\;\;} &{\rm Hom}(A^T_*(X)/MA^T_*(X),\Z)={\rm Hom}(A_*(X),\Z). 
A^*(X)   \ar[rr]^{\mathcal{K}\;\;\;\;\;\;\;\;\;\;\;\;} & &{\rm Hom}(A_*(X),\Z), 
} $$
%state that $\overline{\mathcal{K}_T}$ (and so $\mathcal{K}_T$) is an isomorphism 
%if and only if $\mathcal{K}$ is an isomorphism. This last equivalence is exactly 
yield the content of the lemma. 
\end{proof}

%Our goal is to 
Next we show that complete $T$-linear 
schemes satisfy equivariant Kronecker duality. 
For this, the main ingredient is the following result, due to  
Totaro \cite{to:linear} %and Joshua \cite{jo:linear} 
in the non-equivariant case. 
%, and to Joshua and Krishna \cite{jk:chow} in the case of equivariant $K$-theory. 
%It states that $T$-linear schemes 
%satisfy the equivariant K\"{u}nneth 
%formula. 
Recall that a $T$-scheme $X$ 
is said to satisfy the 
{\em equivariant K\"{u}nneth formula} if 
the K\"{u}nneth map (or exterior product \cite{eg:eqint}) 
$$
A^T_*(X)\otimes_SA^T_*(Y)\to A^T_*(X\times Y)
$$
is an isomorphism for {\em any} $T$-scheme $Y$. %is any $T$-scheme.  
%Although a proof of this fact  
%can be obtained using equivariant motivic cohomology (and 
%will appear in \cite{jk:motivic}),  
%we give here a somewhat simpler proof. 

\begin{prop}\label{kunneth.prop}%\cite{jk:chow}
Let $X$ be a $T$-scheme. 
If $X$ is $T$-linear, then 
it satisfies the equivariant K\"{u}nneth formula. 
\end{prop}

\begin{proof} 
When $X$ is $T$-linear, one can choose representations $V$ of $T$ 
%(refer to Section 2.1 for notation) 
so that $X_T$ is linear, see e.g. \cite[Section 2.2]{bri:eqchow} 
and \cite[Section 1]{p:t}. Now the result follows from \cite[Proposition 1]{to:linear}.
\end{proof}

\begin{prop}\label{tlinear.kro.prop}
%Let $X$ be a complete $T$-scheme. 
If $X$ is a complete $T$-linear scheme, then 
%If $X$ is a projective $T$-linear scheme, then 
the equivariant Kronecker map 
$$
\mathcal{K}_T:A^*_T(X)\to {\rm Hom}_S(A^T_*(X),S)
$$
is an isomorphism. 
\end{prop}

This result follows quite formally from Proposition \ref{kunneth.prop}, 
as in the non-equivariant case \cite[Theorem 3]{f:sph},    
so we only sketch the proof. %outline it. %of the main point. 
%\begin{proof}[Sketch of proof] 
To define the inverse to $\mathcal{K}_T$,   
given a $S$-module homomorphism $\varphi:A^T_*(X)\to S$,   
we construct an element 
$c_{\varphi}\in A^*_T(X)$. %as follows. 
Since the $S$-module $A^T_*(X)$ is finitely generated, 
we can assume, without loss of generality, 
that $\varphi$ is homogeneous  
%Indeed, any 
%$\varphi\in {\rm Hom}_S(A^T_*(X),S)$ decomposes as $\varphi=\sum \varphi_\tau$, 
%where $\varphi_\tau$ is a homogeneous homomorphism of degree $\tau$  
\cite[Part II, Section 11.6]{bourbaki:mod}.   
Bearing this in mind, 
given a homomorphism $\varphi:A^T_*(X)\to S$ of degree $-\lambda$, 
we build $c_\varphi\in A^\lambda_T(X)$ as follows.  
For a $T$-map $f:Y\to X$, 
the corresponding homomorphism $f^*c_{\varphi}:=
c_{\varphi}(f):A_*^T(Y)\to A_{*-\lambda}^T(Y)$ 
is defined to be the composite
%\begin{eqnarray*}
%\begin{footnotesize}
$$
\xymatrix@C=1.6em@R=2em{%@C=0.2em@W=0.2em@R=0.5em{
A^T_*(Y)\ar[r]^{(\gamma_f)_*\hspace{.5cm}} & 
A^T_*(X\times Y)\ar[r]^{\simeq\hspace{.5cm}}& A^T_*(X)\otimes_S A^T_*(Y) \ar[r]^{\varphi\otimes {\rm id}\;\;\;\;}& %\displaystyle \sum_{j\in \Z}A^T_{j}(X)\otimes A^T_{m-j}(Y) \ar[r]^{{\rm pr}_{\leq -\lambda}} & 
%%% A^T_{-\lambda}(X)\otimes A^T_{*+\lambda}(Y) \ar[d]^{\varphi\otimes {\rm id}} \\
%%%%%%\displaystyle \sum_{i\leq -\lambda}A^T_{i}(X)\otimes A^T_{m+i}(Y) \ar[d]^{\varphi\otimes {\rm id}} \\
%%% & &  & \hspace{1cm} \Q\otimes A^T_{*+\lambda}(Y), \\}%\simeq A^T_{*-i}(Y).\\
% & &  & \hspace{1cm} A^T_{m+\lambda}(Y). \\}
                   S\otimes_S A^T_{*}(Y)\simeq A^T_{*}(Y)_,}\\ %should *=(*-\lambda)?
$$ 
%\end{footnotesize}
where $(\gamma_f)_*$ denotes the proper pushforward along the graph of $f$, 
and the second displayed map is the K\"{u}nneth isomorphism (Proposition \ref{kunneth.prop}). 
The verification that $c_{\varphi}$ %(for different $Y$'s) 
satisfies the compatibility axioms, %Chapter 17, 
and that this construction indeed 
gives the inverse to $\mathcal{K}_T$,  
is the same as in \cite{f:sph}. 
%\end{proof}

\medskip

For $c\in A^\lambda_T(X)$ and $z\in A^T_*(X)$, we write $c(z)$ for ${\rm deg}(c\cap z)$. 
The next two corollaries 
describe the cap and cup product structures. They  
are easily deduced from the previous proposition, cf.  
\cite[Corollaries 1 and 2]{f:sph}. %; therefore we skip the proofs.  
%was obtained along the proof of the previous proposition. 

\begin{cor}\label{action.op.chow.on.homology.cor}
Let $f:Y\to X$, $c\in A^\lambda_T(X)$, $z\in A^T_m(Y)$. 
Suppose $(\gamma_f)_*(z)=\sum u_i\otimes v_i$ with $u_i\in A^T_{p(i)}(X)$
and $v_i\in A^T_{m-p(i)}(Y)$. 
Then $f^*c\cap z=\sum_{p(i)\leq \lambda}c(u_i)v_i.$ \qed %\xqedhere{128.5pt}{\qed} %\hfill $\square$
\end{cor}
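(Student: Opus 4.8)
The plan is to extract the identity directly from the computation already carried out in the proof of Theorem \ref{tlinear.kro.thm}, where precisely this formula surfaced while checking that $\mathcal{K}_T$ admits a one-sided inverse. I would begin by factoring $f:Y\to X$ through its graph: writing $\gamma_f:Y\to X\times Y$ for the graph morphism and $\pi_1,\pi_2$ for the projections of $X\times Y$ onto $X$ and $Y$, one has $\pi_1\circ\gamma_f=f$ and $\pi_2\circ\gamma_f={\rm id}_Y$. Since $\gamma_f$ is a closed immersion, hence proper, and operational classes commute with proper pushforward, I would rewrite
$$
f^*c\cap z=(\pi_2)_*(\gamma_f)_*\bigl(\gamma_f^*\pi_1^*c\cap z\bigr)=(\pi_2)_*\bigl(\pi_1^*c\cap(\gamma_f)_*z\bigr),
$$
so that the entire computation is transported to $X\times Y$.

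Next I would feed in the K\"{u}nneth decomposition $(\gamma_f)_*z=\sum_i u_i\otimes v_i$ with $u_i\in A^T_{p(i)}(X)$ and $v_i\in A^T_{m-p(i)}(Y)$, which is legitimate precisely because $X$ is special and hence satisfies the equivariant K\"{u}nneth formula (Theorem \ref{kunneth.prop}). Because $\pi_1$ is flat and operational classes commute with flat pullback, the class $\pi_1^*c$ acts factorwise on the first tensor slot, giving $\pi_1^*c\cap(u_i\otimes v_i)=(c\cap u_i)\otimes v_i$, with $c\cap u_i\in A^T_{p(i)-\lambda}(X)$. It then remains to compute $(\pi_2)_*$ of each product class, which is the content of the projection pushforward formula $(\pi_2)_*(a\otimes b)=p_{X*}(a)\,b$ for $a\in A^T_*(X)$ and $b\in A^T_*(Y)$.

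The main point, and the one deserving care, is the grading argument governing which terms survive. Since $X$ is complete, $p_{X*}$ is proper pushforward to a point and preserves the dimension grading, so $p_{X*}(c\cap u_i)$ lies in $A^T_{p(i)-\lambda}(pt)=A^T_{p(i)-\lambda}$. As $A^T_j(pt)$ vanishes for $j>0$, the term contributes nothing unless $p(i)-\lambda\le 0$, i.e. unless $p(i)\le\lambda$; and for those indices $p_{X*}(c\cap u_i)=\deg(c\cap u_i)=c(u_i)$ by definition. Assembling the surviving summands yields
$$
f^*c\cap z=\sum_{p(i)\le\lambda}c(u_i)\,v_i,
$$
which is the asserted identity. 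The only subtlety to guard against is the bookkeeping that ties the operational degree $\lambda$ to the dimension drop of $c\cap u_i$, and thereby to the vanishing range of $p_{X*}$; once that is pinned down, every step is a direct appeal to the standard compatibilities of operational classes together with the K\"{u}nneth isomorphism.
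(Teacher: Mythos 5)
Your proposal is correct and follows essentially the same route as the paper: the paper derives this corollary as a byproduct of the verification, inside the proof of Theorem \ref{tlinear.kro.thm}, that the composite ${\rm op}A^*_T(X)\to {\rm Hom}_S(A^T_*(X),A^T_*)\to {\rm op}A^*_T(X)$ is the identity, using exactly your steps (graph factorization, compatibility with proper pushforward and flat pullback, the K\"{u}nneth decomposition, and the grading argument for $(\pi_2)_*$). Your explicit justification of the vanishing of terms with $p(i)>\lambda$ via $A^T_j(pt)=0$ for $j>0$ makes precise what the paper leaves implicit, but it is the same argument.
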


\begin{cor}\label{op.cup.prod}
Let $c\in A^\lambda_T(X)$, $c'\in A^\mu_T(X)$, and  $z\in A^T_m(X)$, where $m\leq \lambda+\mu$.  
Write $\delta_*(z)=\sum u_i\otimes v_i$ 
with $u_i\in A^T_{p(i)}(X)$ and $v_i\in A^T_{m-p(i)}(X)$. Then 
$
(c\cup c')(z)=\sum_{m-\mu \leq p(i)\leq \lambda} c(u_i)c'(v_i).
$  \qed 
\end{cor}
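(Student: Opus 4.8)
The plan is to compute $(c\cup c')(z)=\deg\bigl((c\cup c')\cap z\bigr)$ by applying the two operational classes one after the other, and then to read off the stated index range from the gradings of $A^T_*(pt)$.

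First I would use commutativity of the cup product together with the fact that $\cup$ is realized by composition of operations to write $(c\cup c')\cap z=c'\cap(c\cap z)$, where both cap products are taken for the identity map $\mathrm{id}_X\colon X\to X$. The graph of $\mathrm{id}_X$ is precisely the diagonal $\delta\colon X\to X\times X$, so with the given decomposition $\delta_*(z)=\sum u_i\otimes v_i$, Corollary \ref{action.op.chow.on.homology.cor} (applied to $f=\mathrm{id}_X$ and the class $c$) yields
$$
c\cap z=\sum_{p(i)\leq\lambda}c(u_i)\,v_i,
$$
where each coefficient $c(u_i)\in A^T_{p(i)-\lambda}$ and $v_i\in A^T_{m-p(i)}(X)$.

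Next I would cap with $c'$ and push forward to a point. Since the coefficients $c(u_i)$ are scalars in $A^T_*=A^T_*(pt)$, and both the operation $c'$ and the proper pushforward $p_{X*}$ are $S$-linear, they commute past these scalars; hence
$$
(c\cup c')(z)=\deg\Bigl(c'\cap\sum_{p(i)\leq\lambda}c(u_i)\,v_i\Bigr)=\sum_{p(i)\leq\lambda}c(u_i)\,\deg(c'\cap v_i)=\sum_{p(i)\leq\lambda}c(u_i)\,c'(v_i).
$$

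Finally I would cut the index range down by a grading argument, which is the only point requiring care. The factor $c(u_i)=p_{X*}(c\cap u_i)$ lies in $A^T_{p(i)-\lambda}$ and the factor $c'(v_i)=p_{X*}(c'\cap v_i)$ lies in $A^T_{m-p(i)-\mu}$; since $A^T_*=A^T_*(pt)$ is the module $S$ in the opposite grading, $A^T_j\neq 0$ only for $j\leq 0$. Thus $c(u_i)\neq 0$ forces $p(i)\leq\lambda$ (already built into the sum), while $c'(v_i)\neq 0$ forces $m-p(i)-\mu\leq 0$, i.e.\ $p(i)\geq m-\mu$. The surviving terms are therefore exactly those with $m-\mu\leq p(i)\leq\lambda$, which is the asserted formula; the hypothesis $m\leq\lambda+\mu$ is precisely what makes this range nonempty, consistent with $(c\cup c')(z)\in A^T_{m-\lambda-\mu}$. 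I expect no serious obstacle beyond this grading bookkeeping, since the substantive input, namely the K\"unneth-type description of cap products through the diagonal, is already supplied by Corollary \ref{action.op.chow.on.homology.cor}.
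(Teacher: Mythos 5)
Your proof is correct and is essentially the paper's own argument: both reduce to Corollary \ref{action.op.chow.on.homology.cor} with $f=\mathrm{id}_X$ and $\gamma_f=\delta$, realize the cup product as composition of the operators, and extract the index range from the grading of $A^T_*(pt)$. The only (immaterial) difference is the order of application: the paper caps with $c'$ first, using the swapped decomposition $\delta_*(z)=\sum v_i\otimes u_i$ to produce the bound $m-\mu\leq p(i)$ and the degree argument to produce $p(i)\leq\lambda$, whereas you cap with $c$ first (invoking commutativity of $\cup$) and obtain the two bounds in the opposite way.
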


\medskip 

For a $T$-scheme $X$, 
there is a natural map $\iota^*:A^*_T(X)\to A^*(X)$ (Section 2.1). 
In general, when $X$ is singular, $\iota^*$ may not be surjective, and 
its kernel may not be generated in degree one \cite{pk:tor}. 
%But when $X$ is singular, this is hardly the case. 
Next we describe a class of possibly singular $T$-schemes 
for which the map $\iota^*$ is well-behaved.    
%for possibly singular $X$ under certain conditions. 
This yields   
the compatibility of our product formulas 
with those of \cite{f:sph}. %, whenever $A^*_T(X)$ is $S$-free. 

\begin{cor}\label{usual.op.chow.cor}
%Assume ${\rm char}(\k)=0$. 
Let $X$ be a complete $T$-scheme. 
If $X$ is $T$-linear and $A^T_*(X)$ is $S$-free, 
then %$i^*:A^*_T(X)\to A^*(X)$ is surjective, 
%and 
the map 
$A^*_T(X)/\Delta A^*_T(X)\to A^*(X)$, induced by $\iota^*$, 
is an isomorphism.  
%where $\Delta$ is the character group of $T$.   
\end{cor}

\begin{proof}
%Use Proposition 5 of E-G Localization in the Chow group, and the commutative square of Lemma 3 in 
%Brion's Poincar\'e duality paper.
Proposition \ref{tlinear.kro.prop} together with freeness of $A^T_*(X)$ yield  
%%\begin{small}
$$
\xymatrix@C=1.2em@R=0.5em{%%@C=0.2em@W=0.2em@R=0.5em{
A^*_T(X)/\Delta A^*_T(X) & \simeq & {\rm Hom}_S(A^T_*(X),S)/\Delta {\rm Hom}_S(A^T_*(X),S) \\
                   & \simeq & {\rm Hom}_{\Z}(A^T_*(X)/\Delta A^T_*(X),\Z). 
}
$$
Furthermore, by Theorem \ref{Tequiv.thm}, the term on the right hand side 
corresponds to 
${\rm Hom}(A_*(X),\Z)$, which, in turn, is isomorphic to $A^*(X)$, due to the 
non-equivariant version of Kronecker duality \cite[Proposition 1]{to:linear}. %, 
Considering this information  alongside the commutative diagram   
$$\xymatrix{
A^*_T(X) \ar[rr]^{\mathcal{K}_T\;\;\;\;\;\;\;\;}\ar[d]   &  &{\rm Hom}_S(A^T_*(X),S)\ar[d] \\
%A^*(X)=A^*_T(X)/MA^*_T(X)   \ar[r]^{\mathcal{K}\;\;\;\;\;\;\;\;\;\;\;\;} &{\rm Hom}(A^T_*(X)/MA^T_*(X),\Z)={\rm Hom}(A_*(X),\Z). 
A^*(X)   \ar[rr]^{\mathcal{K}\;\;\;\;\;\;\;\;\;\;\;\;} & &{\rm Hom}(A_*(X),\Z), 
} $$
produces the content of the corollary. 
\end{proof}

The conditions of 
Corollary \ref{usual.op.chow.cor} 
are satisfied by possibly  
singular $T$-cellular varieties (e.g. Schubert varieties). 
With $\Q$-coefficients, the corresponding statement   
is satisfied by  
$\Q$-filtrable 
spherical varieties \cite{go:rm}. 
This class includes all rationally smooth projective 
equivariant embeddings of reductive groups \cite{go:rm}. %({\em op. cit.}). 
%Compare 
%\cite{go:cells}.  

\subsection{Localization for $T$-equivariant Kronecker duality schemes}

%We explore some of the appealing features of 
%$T$-equivariant Kronecker duality schemes, 
%a class of spaces that, in view of our previous results, 
%includes all complete $T$-linear schemes.

From the viewpoint of algebraic torus actions, %on schemes,
the main attribute of equivariant Kronecker duality schemes 
is that they supply a somewhat more intrinsic  
background for 
establishing 
localization theorems on 
integral equivariant Chow cohomology.

\begin{thm}\label{eqloc.kro.thm}
Let $X$ be a complete $T$-scheme satisfying $T$-equivariant Kronecker duality. 
Let $H\subset T$ be a subtorus of $T$ and let 
$i_{H}:X^{H}\to X$ be the inclusion of the fixed point 
subscheme. 
If $X^{H}$ also satisfies $T$-equivariant Kronecker duality, 
then the morphism   
$$
i^*_{H}:A^*_T(X)\to A^*_T(X^{H})
$$ 
becomes an isomorphism after inverting 
finitely many characters of $T$ that restrict non-trivially 
to $H$. In particular, $i^*_{H}$ is injective over $\Z$. 
\end{thm}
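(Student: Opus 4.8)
The plan is to transport the localization theorem for equivariant Chow \emph{groups} (Proposition \ref{genloc.prop}) across the two Kronecker duality isomorphisms, thereby deducing the corresponding statement for equivariant Chow \emph{cohomology}. First I would apply the functoriality of $\mathcal{K}_T$ recorded in Remark \ref{krfunct.rem} to the proper morphism $i_H:X^H\to X$; note that $X^H$ is complete, being a closed subscheme of the complete scheme $X$, so the remark applies. This produces the commutative square
$$
\xymatrix{
{\rm op}A^*_T(X)\ar[rr]^{i^*_H}\ar[d]_{\mathcal{K}_T} & & {\rm op}A^*_T(X^H)\ar[d]^{\mathcal{K}_T}\\
{\rm Hom}_S(A^T_*(X),A^T_*)\ar[rr]^{(i_{H*})^t} & & {\rm Hom}_S(A^T_*(X^H),A^T_*),
}
$$
in which both vertical maps are isomorphisms by the hypothesis that $X$ and $X^H$ satisfy $T$-equivariant Kronecker duality. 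Consequently $i^*_H$ becomes an isomorphism after inverting a given set of characters precisely when the transpose $(i_{H*})^t={\rm Hom}_S(i_{H*},A^T_*)$ does.

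Next I would let $\Sigma\subset S$ be the multiplicative set generated by the finitely many characters of $T$ supplied by Proposition \ref{genloc.prop} (those restricting non-trivially to $H$), so that $\Sigma^{-1}i_{H*}:\Sigma^{-1}A^T_*(X^H)\to \Sigma^{-1}A^T_*(X)$ is an isomorphism. Because $X$ and $X^H$ satisfy Kronecker duality, the modules $A^T_*(X)$ and $A^T_*(X^H)$ are finitely generated, hence finitely presented over the Noetherian ring $S={\rm Sym}_\Z(\Delta)\cong\Z[x_1,\dots,x_r]$. For finitely presented modules localization commutes with $\mathrm{Hom}$, so
$$
\Sigma^{-1}(i_{H*})^t\;\cong\;{\rm Hom}_{\Sigma^{-1}S}(\Sigma^{-1}i_{H*},\Sigma^{-1}A^T_*),
$$
which is an isomorphism since $\Sigma^{-1}i_{H*}$ is. By the commutative square, $\Sigma^{-1}i^*_H$ is then an isomorphism, which is the first assertion.

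For the injectivity over $\Z$, I would use that $S$ is an integral domain in which the inverted characters are nonzero, together with the fact that Kronecker duality forces ${\rm op}A^*_T(X)$ to be torsion free over $S$ (as noted after Remark \ref{krfunct.rem}); hence the localization map ${\rm op}A^*_T(X)\to\Sigma^{-1}{\rm op}A^*_T(X)$ is injective. The composite ${\rm op}A^*_T(X)\to\Sigma^{-1}{\rm op}A^*_T(X)\xrightarrow{\Sigma^{-1}i^*_H}\Sigma^{-1}{\rm op}A^*_T(X^H)$ is then injective, being an injection followed by an isomorphism; by commutativity of the localization square this composite equals $i^*_H$ followed by the localization map on ${\rm op}A^*_T(X^H)$, forcing $i^*_H$ itself to be injective. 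The argument is essentially a diagram chase, so there is no deep obstacle; the only points requiring care are confirming that the characters furnished by Proposition \ref{genloc.prop} are exactly the ones inverted, and that the finite-generation hypothesis built into Kronecker duality is genuinely used to commute $\mathrm{Hom}$ with localization — without it the passage from $(i_{H*})^t$ back to $\mathrm{Hom}$ of the localized map would break down.
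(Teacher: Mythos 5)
Your proposal is correct and follows essentially the same route as the paper's own proof: transporting Proposition \ref{genloc.prop} across the two Kronecker duality isomorphisms via the commutative square from Remark \ref{krfunct.rem}, using finite generation over the Noetherian ring $S$ to commute localization with $\mathrm{Hom}$, and deducing integral injectivity from $S$-torsion-freeness of ${\rm op}A^*_T(X)$. The only cosmetic difference is that the paper factors the injectivity argument through the quotient field $\mathcal{Q}$ of $S$ rather than arguing directly with the multiplicative set, which amounts to the same thing.
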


\begin{proof}
By Proposition \ref{genloc.prop},  
the localized map  
$(i_{H}*)_{\mathcal{F}}:A^T_*(X^{H})_{\mathcal{F}}\to 
A^T_*(X)_{\mathcal{F}}$ 
is an isomorphism, where $\mathcal{F}$ is a finite family 
of characters of $T$ that restrict non-trivially to $H$.

Now consider the commutative diagram 
$$
\xymatrix{
A^*_T(X)            \ar[rr]^{i^*_{H}}\ar[d]      & & A^*_T(X^{H})  \ar[d]       \\
\Hom_S{(A^T_*(X),S)}  \ar[rr]^{(i_{{H}*})^t} & & \Hom_S{(A^T_*(X^{H}),S)}, \\
}
$$ 
where $(i_{{H}*})^t$ represents the transpose %dual 
of $i_{{H}*}:A^T_*(X^{H})\to A^T_*(X)$ (commutativity 
follows from %the projection formula 
Remark \ref{krfunct.rem}, because $i_{H}$ is proper).   
By our assumptions on $X$ and $X^{H}$, 
both vertical maps are isomorphisms. 
Moreover, 
after localization at $\mathcal{F}$, 
the above commutative diagram becomes
$$ 
\xymatrix{
A^*_T(X)_{\mathcal{F}}                    \ar[rr]^{(i^*_{H})_{\mathcal{F}}}\ar[d] &  & A^*_T(X^{H})_{\mathcal{F}}  \ar[d]       \\
(\Hom_S{(A^T_*(X),S)})_{\mathcal{F}}  \ar[rr]^{((i_{{H}*})^t)_{\mathcal{F}}}  & & (\Hom_S(A^T_*(X^{H}),S))_{\mathcal{F}}. \\
}
$$

Since %$A^*_T(X)\simeq \Hom{(A^T_*(X),A^*_T)}$ and 
$A^T_*(X)$ is a finitely generated $S$-module (as $X$ satisfies equivariant Kronecker duality), 
localization commutes with formation of $\Hom$  
(see \cite[Prop. 2.10, p. 69]{ei:comm}), and so 
$$A^*_T(X)_{\mathcal{F}}\simeq 
(\Hom_S{(A^T_*(X),S)})_{\mathcal{F}}\simeq 
\Hom_{S_\mathcal{F}}(A^T_*(X)_{\mathcal{F}}, {S}_\mathcal{F}).$$
Similarly, for $X^{H}$ we obtain
$$
A^*_T(X^{H})_{\mathcal{F}}\simeq (\Hom_S(A^T_*(X^{H}),S))_{\mathcal{F}} \simeq \Hom_{S_\mathcal{F}}(A^T_*(X^{H})_{\mathcal{F}}, {S}_\mathcal{F}). 
$$
In other words, the bottom map from the preceding diagram fits in the commutative square 
$$
\xymatrix{
(\Hom_S{(A^T_*(X),S)})_{\mathcal{F}}  \ar[rr]^{((i_{{H}*})^t)_{\mathcal{F}}} \ar[d] & & (\Hom_S(A^T_*(X^{H}),S))_{\mathcal{F}}\ar[d] \\
\Hom_{S_\mathcal{F}}(A^T_*(X)_{\mathcal{F}}, {S}_\mathcal{F})                   \ar[rr]^{((i_{{H}*})_{\mathcal{F}})^t} & &  \Hom_{S_\mathcal{F}}(A^T_*(X^{H})_{\mathcal{F}}, {S}_\mathcal{F}), \\
}
$$
where the vertical maps are natural isomorphisms. %(\cite[Prop. 2.10, p. 69]{ei:comm}).  
But we already know that $(i_{{H}*})_{\mathcal{F}}$ is an isomorphism, hence 
so are $((i_{{H}*})_{\mathcal{F}})^t$, 
$((i_{{H}*})^t)_{\mathcal{F}}$ and $(i^*_{H})_{\mathcal{F}}$. 

\smallskip

Finally, to prove the last assertion of the theorem, recall that 
the $S$-module 
$A^*_T(X)$ 
is finitely generated and torsion free (Definition 3.1). Hence the natural map 
$A^*_T(X)\to A^*_T(X)\otimes_S \mathcal{Q}$ is injective, 
where $\mathcal{Q}$ is the quotient field of $S$. 
In particular, the (also natural) map  
$A^*_T(X)\to A^*_T(X)_{\mathcal{F}}$ 
is injective. This, together with the 
first part of the theorem, yields injectivity of $i^*_{H}$. We are done. 
\end{proof}

\begin{cor}\label{cs.cor.tlinear}
Let $X$ be a complete $T$-scheme. 
Let $H$ be a codimension-one subtorus of $T$. 
%, and let $i_H:X^H\to X$ 
%be the natural inclusion. 
If $X$ is $T$-linear, then the pullback 
$i^*_H:A^*_T(X)\to A^*_T(X^H)$ is injective over $\Z$.  
\end{cor}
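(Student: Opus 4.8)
The plan is to derive the statement directly from Theorem \ref{eqloc.kro.thm}, applied to the subtorus $H\subseteq T$. To invoke that theorem I must check that both $X$ and its fixed point subscheme $X^H$ are complete $T$-schemes satisfying $T$-equivariant Kronecker duality. For $X$ this is immediate: $X$ is complete and special by hypothesis, so $A^T_*(X)$ is finitely generated over $S$ (Lemma \ref{finite.rem} together with the discussion following Theorem \ref{Tequiv.thm}), and the map $\mathcal{K}_T$ is an isomorphism by Theorem \ref{tlinear.kro.thm}; these two facts are exactly the defining conditions of $T$-equivariant Kronecker duality. The real content, therefore, is to show that $X^H$ is again complete and special, for then the same two results apply to $X^H$, and Theorem \ref{eqloc.kro.thm} concludes the proof by giving an isomorphism after inverting finitely many characters that restrict nontrivially to $H$ and, in particular, injectivity of $i^*_H$ over $\Z$.

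Completeness of $X^H$ is clear, as it is a closed subscheme of the complete scheme $X$. To see that $X^H$ is special I would treat the two alternatives of Definition \ref{special.tlinear.dfn} separately. If $X$ is $T$-linear and $T$-quasiprojective (case (i)), then $X^H$ is $T$-equivariantly $n$-linear whenever $X$ is, by the inductive description of $T$-linear schemes recorded after Definition \ref{linear.dfn}; and $X^H$ is $T$-quasiprojective because the restriction to the closed subscheme $X^H$ of an ample $T$-linearized invertible sheaf on $X$ is again ample and $T$-linearized. Hence $X^H$ is special of type (i).

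If instead the action comes from a connected reductive group $G\supseteq B\supseteq T$ with $X$ having finitely many $B$-orbits (case (ii)), then I would pass to the centralizer $L:=C_G(H)$, which is a connected reductive group with maximal torus $T$ and Borel subgroup $B_L:=B\cap L=C_B(H)$, and which preserves $X^H$. The point to establish is that $X^H$ carries only finitely many $B_L$-orbits: each $B$-orbit of $X$ is $T$-stable, hence $H$-stable, so its $H$-fixed locus is $B_L$-stable, and analyzing this fixed locus through the fibered structure of $B$-orbits used in the proof of Theorem \ref{spherical_are_tlinear.thm} shows that it decomposes into finitely many $B_L$-orbits; summing over the finitely many $B$-orbits of $X$ then yields finiteness for $X^H$. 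With this in hand, $X^H$ is special of type (ii). I expect this finiteness of $B_L$-orbits on the fixed locus to be the main obstacle; everything else is bookkeeping around Theorems \ref{tlinear.kro.thm} and \ref{eqloc.kro.thm}. Finally, I note that the codimension-one hypothesis on $H$ is not actually needed for the argument — it applies verbatim to any subtorus — but it records the case relevant to the localization and GKM applications that follow.
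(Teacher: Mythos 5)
Your overall strategy is exactly the paper's: the published proof is literally ``If $X$ is special, then so is $X^H$; now use Theorems \ref{tlinear.kro.thm} and \ref{eqloc.kro.thm}'', and your treatment of case (i) of Definition \ref{special.tlinear.dfn} (fixed points of $T$-linear schemes are $T$-linear, and restriction of an ample $T$-linearized sheaf to the $T$-stable closed subscheme $X^H$ stays ample and linearized) fills in that half correctly.

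The gap is in case (ii), which you yourself flag as ``the main obstacle'' but then do not overcome. The assertion that $X^H$ has finitely many $C_B(H)$-orbits is the entire content of the speciality claim there, and the analysis you gesture at does not deliver it. The fibered structure from the proof of Theorem \ref{spherical_are_tlinear.thm} gives (in characteristic zero) a $T$-equivariant isomorphism of a $B$-orbit with $(\text{torus quotient})\times \mathbb{A}^{\ell}$, so its $H$-fixed locus is $T$-equivariantly a torus times a linear subspace; but this controls only the $T$-action, and says nothing about how $C_B(H)$ acts on that fixed locus, hence nothing about finiteness of $C_B(H)$-orbits. Moreover, that proof is intrinsically characteristic-zero (the $T$-equivariant trivialization uses the derivation argument), whereas case (ii) of Definition \ref{special.tlinear.dfn} is precisely the device by which the paper covers spherical varieties in \emph{arbitrary} characteristic. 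The route consistent with the paper's toolkit is different: for $H$ of codimension one, Proposition \ref{st.cod.one.fixed.sph.prop} (Brion) says each irreducible component of $X^H$ is a spherical $C_G(H)$-variety, and Knop's theorem \cite{knop:b} then gives finitely many $C_B(H)$-orbits in any characteristic; this is also exactly where the codimension-one hypothesis enters. Consequently your closing remark that the corollary holds ``verbatim'' for arbitrary subtori is unsupported by your argument: it is true for case (i), and for case (ii) it can be proved (e.g.\ in characteristic zero by a tangent-space argument showing every $C_B(H)$-orbit is open in the smooth fixed locus of each $B$-orbit), but that requires an idea not present in your proposal and not needed if one simply invokes Proposition \ref{st.cod.one.fixed.sph.prop} as the paper intends.
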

\begin{proof}
If $X$ is $T$-linear, 
then 
so is $X^H$. Now use 
Proposition \ref{tlinear.kro.prop} and Theorem \ref{eqloc.kro.thm}.  
\end{proof}

\smallskip

%\begin{rem}
Let $X$ be a complete $T$-linear scheme. It 
follows from Corollary \ref{cs.cor.tlinear} that 
the image of the {\em injective} 
map $i^*_T:A^*_T(X)\to A^*_T(X^T)$ 
is contained in the intersection 
of the images of the (also injective) maps
$
i^*_{T,H}:A^*_T(X^H)\to A^*_T(X^T),
$
where $H$ runs over all subtori of codimension one in $T$. 
%In symbols, 
%${\rm Im}(i^*_T)%:{\rm op}A^*_T(X)_\Q\to {\rm op}A^*_T(X^T)_\Q]
%\subseteqq 
%\bigcap_{H\subset T} {\rm Im}(
%i^*_{T,H}).$
%where the intersection runs over all codimension-one subtori $H$ of $T$.
%
When the image of $i^*_T$ is {\rm exactly} the intersection of 
the images of the maps $i^*_{T,H}$ we say, following  
\cite{go:opk}, that $X$ has the 
{\em Chang-Skjelbred property} (or {\em CS property}). 
If the defining condition holds over $\Q$ rather than $\Z$, we say that 
$X$ has the {\em rational CS property}.
By Theorem \ref{smth.comp.free.rem}, any smooth complete $T$-scheme has 
the rational CS property; by Theorem \ref{cs.thm},     
so does any complete $T$-scheme in characteristic zero. 
It would be interesting to determine, in arbitrary characteristic,   
which complete, possibly singular,   
$T$-linear schemes satisfy the CS property. %, in arbitrary characteristic. 
For instance, toric varieties are 
known to have this property \cite{p:t}.  
We anticipate that this also holds for 
projective embeddings of semisimple groups 
of adjoint type (this shall be pursued elsewhere). 
For the corresponding problem with rational coefficients, we provide an answer next.     
%As for the rational CS property, 
%we provide a result in this direction.   
%Notice that for equivariant operational $K$-theory, 
%the corresponding 
%CS property holds over $\Z$ for 
%every complete $T$-scheme in characteristic zero \cite{go:opk}.   
%\end{rem}

\begin{thm}\label{Tlinear.envelopes.thm} 
Let $X$ be a complete $T$-linear scheme. 
If there exists an equivariant envelope $\pi:\tilde{X}\to X$  
%from a smooth $T$-scheme $\tilde{X}$, 
with $\tilde{X}$ smooth, 
then $X$ has the rational CS property. 
In particular, projective embeddings 
 of connected reductive linear algebraic groups 
 have the rational CS property in arbitrary characteristic.
\end{thm}

\begin{proof}
Let $u\in A^*_T(X^T)_\Q$ be such that   
$u\in \bigcap_{H\subset T} {\rm Im}(i^*_{T,H})$,  
%:{\rm op}A^*_T(X^{H})_\Q\to {\rm op}A^*_T(X^T)_\Q]$, 
where the intersection runs over all codimension-one subtori $H$ of $T$. 
Our task is to show that $u\in {\rm Im}(i^*_T)_\Q$. 
First, observe that there is a commutative diagram   
$$
\xymatrix{
 & A^*_T(X)_\Q   \ar[rr]^{\pi^*}\ar[d]^{i_T^*}\ar[ddl]_{i^*_H}& 
&A^*_T(\tilde{X})_\Q \ar[d]^{\widetilde{i_T^*}}\ar[ddl]_{\widetilde{i^*_H}}\\
 & A^*_T(X^T)_\Q \ar[rr]^{{\pi_T}^*}& &A^*_T(\tilde{X}^T)_\Q\\
A^*_T(X^H)_\Q \ar[rr]^{{\pi_H}^*}\ar[ur]_{i^*_{T,H}}& & 
A^*_T(\tilde{X}^H)_\Q\ar[ur]_{\widetilde{i^*_{T,H}}} & \\
}
$$
obtained by combining 
and comparing  
the sequences that \cite[Lemma 7.2]{eg:cycles} and Theorem \ref{kimura.thm} assign   
to the envelopes $\pi:\tilde{X}\to X$, $\pi_H:\tilde{X}^H\to X^H$ 
and $p_T:\tilde{X}^T\to X^T$ (cf. \cite[proof of Theorem 4.4]{go:opk}. 
From the diagram it follows that 
%if $u\in {\rm op}A(X^T)_\Q$ is in 
%the image of $i^*_{T,H}$, then 
${\pi_T}^*(u)$ is in the image of $\widetilde{i^*_{T,H}}$. 
Hence, %if $u$ is in the intersection of the images of all $i^*_{T,H}$, then 
${\pi_T}^*(u)$ is in the intersection of the images of all 
$\widetilde{i^*_{T,H}}$, where $H$ runs over all codimension-one subtori of $T$. 
Since $\tilde{X}$ is known to have 
the rational CS property (Theorem \ref{smth.comp.free.rem}), 
%\cite[Theorem 3.3]{bri:eqchow}, 
${\pi_T}^*(u)$ is in the image of 
$\widetilde{i^*_T}$. 
So let 
$y\in A^*_T(\tilde{X})$ be such that $\tilde{i}^*_T(y)=\pi_T^*(u)$. 
To conclude the proof, we need to check that $y$ is in the image of $\pi^*$. 
In view of equivariant Kronecker duality (Proposition \ref{tlinear.kro.prop}), 
this is equivalent to checking that the dual of $y$,   
%via the equivariant Kronecker map $\mathcal{K}_T$, 
namely,  
$\tilde{\varphi}:=\mathcal{K}_T(y)$ is in the image of $\pi_*^t$, the transpose of the 
surjective morphism $\pi_*:A^T_*(\tilde{X})_\Q\to A^T_*(X)_\Q$. Also, we should observe that the functor $\mathcal{K}_T(-)$  
transforms the previous commutative diagram into another one involving the corresponding dual modules ${\rm Hom}(A^T_*(-)_\Q,S_\Q)$. 
Now set $\varphi_u:=\mathcal{K}_T(u)$. By construction, for every codimension-one subtorus $H$, there 
exists $\varphi_u^H:A^T_*(X^H)_\Q\to S_\Q$ such that $\varphi_u=\varphi^H_u \circ {i_{T,H}}_*$. In fact, 
we can place this information into a commutative diagram:  
%But this is done as follows: 
$$
\xymatrix{
A^T_*(\tilde{X^T})_\Q \ar@{->>}[d]^{\pi^T_*}\ar[r]^{\widetilde{i_{T,H}}_*}& A^T_*(\tilde{X}^H)_\Q \ar@{->>}^{\pi_*^H}[d]\ar[r]& A^T_*(\tilde{X})_\Q\ar@{->>}[d]^{\pi_*} \ar[ldd]_{\tilde{\varphi}}\\
A^T_*(X^T)_\Q \ar[r]^{{i_{T,H}}_*}\ar[rd]^{\varphi_u}& A^T_*(X^H)_\Q \ar[r]\ar[d]^{\varphi_u^H}& A^T_*(X)_\Q\ar@{-->}[ld]^{\varphi \; (\exists ?)} \\
           &  S_\Q.         &      \\
}
$$
%where $\tilde{\varphi}=\mathcal{K}_T$
%because $u$ is in the intersection of the images of the various $\pi_H$ (the vertical maps in the middle), and they define corresponding elements at the $\tilde{X}$-level, then 
%there exists $\tilde{\varphi}$ as above. 
In this form, our task reduces to showing that there exists $\varphi$ making the dotted arrow into a solid arrow. 
Bearing this in mind, we claim that  
$\tilde{\varphi}$ is zero on the kernel of $\pi_*$. 
Indeed, let $v\in A^T_*(\tilde{X})_\Q$ be such that $\pi_*(v)=0$. By the localization theorem 
there exists a product of non-trivial characters $\chi_1\cdots \chi_m$ such that 
$\chi_1\cdots \chi_m \cdot v$ is in the image of $\tilde{i}^T_*:A^T_*(\tilde{X}^T)_\Q\to A^T_*(\tilde{X})_\Q$. 
As both of these $S$-modules are free, then, unless $v$ is zero, we have 
$\chi_1\cdots \chi_m \cdot v\neq 0$. Let $w$ be such that 
$\tilde{i}^T_*(w)=\chi_1\cdot \chi_m \cdot v$. 
By commutativity of the diagram, $i^T_*(\pi^T_*(w))=0$. 
But $i^T_*$ 
is injective by Theorem \ref{eqloc.kro.thm}, so $\pi_*^T(w)=0$. Thus  
$$\tilde{\varphi}(\chi_1\cdots \chi_m \cdot v)=\chi_1\cdots \chi_m \cdot \tilde{\varphi}(v)=\varphi_u(\pi_*^T(w))=0.$$ 
As $S_\Q$ has no torsion, we get 
$\tilde{\varphi}(v)=0$, which proves the claim. 
Using this, one easily defines $\varphi$ with the 
sought-after properties. %We are done.  

Finally, for the last assertion of the theorem, recall that group embeddings 
are a special class of spherical varieties known to have resolutions of singularities 
in arbitrary characteristic \cite[Chapter 6]{bk:frob}.
\end{proof}

\begin{rem}   
Theorem \ref{Tlinear.envelopes.thm} and its proof 
can be readily translated into 
the language of 
equivariant operational $K$-theory 
with $\Z$-coefficients. 
See \cite[Section 6]{go:opk} for a 
presentation of the (integral) equivariant 
operational $K$-theory of projective group embeddings. 
%(cf. \cite{go:opk}).   
%coefficients also holds. 
\end{rem}

%Theorem \ref{Tlinear.envelopes.thm} holds for a 
%very large subclass of spherical varieties, namely, 
%projective group embeddings, since they   

%\begin{rem}
%Let $\pi:\tilde{X}\to X$ be a  
%$T$-equivariant envelope and suppose that 
%$\tilde{X}$ is a complete special $T$-scheme. 
%Then $i^*_T:{\rm op}A^*_T(X)\to {\rm op}A^*_T(X^T)$ 
%is also injective (cf. Proposition \ref{inj.fix.set.lem}).   
%\end{rem}

\section{Rational equivariant Chow cohomology of spherical varieties}

Throughout this section we work in characteristic zero. 
The aim is to 
describe the rational equivariant Chow cohomology 
of a spherical variety %(in characteristic zero) 
by comparing it with that 
of an equivariant resolution, using the rational CS property 
(Theorem \ref{Tlinear.envelopes.thm}) and 
equivariant Kronecker duality (Proposition \ref{tlinear.kro.prop}).  
The main result (Theorem \ref{opA_spherical.thm}), 
inspired by \cite[Theorem 7.3]{bri:eqchow}, 
is an extension of Brion's description  
to the setting of equivariant operational Chow groups. 

\medskip

In what follows, we denote by  
$G$ 
a connected reductive 
linear algebraic group  with Borel subgroup 
$B$ and maximal torus $T\subset B$.  
We denote by $W$ the Weyl group of $(G,T)$. 
Observe that $W$ is generated by reflections 
$\{s_\alpha\}_{\alpha \in \Phi}$, 
where $\Phi$ stands for the set of roots of $(G,T)$. 
%As usual, $S=A^*_T$ and 
Recall that $S^W_\Q=({A^*_T}_\Q)^W={A^*_G}\otimes \Q$. 
For the purposes of this section, we shall assume that $G$-spherical varieties are 
{\em locally linearizable} for the induced $T$-action. 

\subsection{Preliminaries}  
Recall that any spherical $G$-variety contains 
only finitely many $G$-orbits;
as a consequence, it contains only finitely many fixed points of $T$. 
Moreover, since ${\rm char}(\k)=0$, any spherical $G$-variety $X$ admits an equivariant 
resolution of singularities, i.e., there exists a smooth $G$-variety $\tilde{X}$ 
together with a proper birational $G$-equivariant morphism $\pi:\tilde{X}\to X$. 
Then the $G$-variety $\tilde{X}$ is also spherical; if moreover $X$ is complete, 
we may arrange so that $\tilde{X}$ is projective. 
Notice that, in general, a resolution of singularities need not be an 
equivariant envelope. The next result gives an important class 
of spherical varieties for which equivariant resolutions 
{\em are} equivariant envelopes. We thank M. Brion for leading us 
to the following proof.

\begin{prop}\label{scs.envelope}
Let $X$ be a normal simply-connected spherical $G$-variety (i.e. the $B$-isotropy 
group of its dense orbit is connected).    
Let $f:\tilde{X}\to X$ be a proper birational morphism. 
Then $f$ is an equivariant envelope.  
\end{prop}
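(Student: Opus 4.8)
The plan is to verify directly the defining property of a ($G$-)equivariant envelope (Section 2.1); throughout I take $f$ to be $G$-equivariant, as that notion requires. So for every $G$-invariant subvariety $W\subseteq X$ I must exhibit a $G$-invariant subvariety $\widetilde{W}\subseteq\widetilde{X}$ mapping birationally onto $W$ via $f$. Since $G$ is connected, the irreducible components of a $G$-invariant subvariety are again $G$-invariant, and each irreducible $G$-invariant subvariety is the closure $\overline{O}$ of a single $G$-orbit $O$ (a spherical variety has finitely many orbits). Hence it suffices to treat $W=\overline{O}$, and in fact it is enough to produce a $G$-orbit $\widetilde{O}\subseteq\widetilde{X}$ with $f(\widetilde{O})=O$ and $f\colon\widetilde{O}\to O$ birational: then $\widetilde{W}:=\overline{\widetilde{O}}$ works.

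For the dense orbit, and more generally for any $O$ meeting the open locus $U\subseteq X$ on which $f$ is an isomorphism, I would take $\widetilde{W}$ to be the closure in $\widetilde{X}$ of $f^{-1}(O\cap U)$ (the strict transform of $\overline{O}$), which is $G$-invariant since $f$ is equivariant and maps birationally onto $\overline{O}$. All the content therefore lies in the boundary orbits $O\cong G/K$ contained in $X\setminus U$. For such an $O$ the fibre $f^{-1}(\overline{O})$ is $G$-invariant and, $\widetilde{X}$ being spherical, is a finite union of $G$-orbits; by properness $f$ is surjective, so some orbit dominates $O$. The real task is to find among these an orbit $\widetilde{O}\cong G/\widetilde{K}$ of the \emph{same} dimension as $O$ whose orbit map has degree one, equivalently with $\widetilde{K}=K$.

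To do this I would appeal to the local structure theorem for spherical varieties: there is a parabolic $P\supseteq B$ and a $P$-stable affine open subset meeting $O$ that splits $G$-equivariantly as $R_u(P)\times Z$, where $Z$ is an affine toric variety under a quotient torus of $T$. Here the simply-connectedness hypothesis is decisive: connectedness of the $B$-isotropy $B\cap H$ of the dense orbit forces the stabilisers of all boundary $G$-orbits to be connected, so that the torus acts on $Z$ with connected stabilisers and $Z$ is a genuine toric variety rather than a finite quotient of one. Over the slice $f$ becomes a proper birational toric morphism, hence a subdivision of fans; for a subdivision every cone of the coarser fan contains in its relative interior a cone of the refinement of the same dimension, which yields a torus orbit mapping isomorphically onto the given one. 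Transporting this back across the $R_u(P)$-factor and spreading out by $G$-equivariance produces the required orbit $\widetilde{O}\to O$ of degree one, and assembling these over all orbit closures shows that $f$ satisfies the definition of a $G$-equivariant envelope.

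The main obstacle is precisely this degree-one assertion. Without the simply-connectedness assumption the map $\widetilde{O}\to O$ could be a nontrivial finite \'etale cover, of degree equal to the index $[K:\widetilde{K}]$ of stabilisers; the delicate point will be to control the component groups of all orbit stabilisers by that of the generic stabiliser and to check that connectedness of $B\cap H$ forces them all to be connected. Once this is established one has $\widetilde{K}=K$, and birationality is immediate. I expect everything else --- the reductions, the strict-transform construction, and the toric bookkeeping --- to be routine given the structure theory of spherical embeddings.
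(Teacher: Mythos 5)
Your reduction to orbits is sound (and, via strict transforms, it would in fact give the envelope property for \emph{all} subvarieties, not only the $G$-invariant ones to which you restrict the definition), but the proof stops exactly where the proposition begins: the degree-one assertion. You write that ``the delicate point will be to control the component groups of all orbit stabilisers by that of the generic stabiliser'' --- that delicate point \emph{is} the content of the proposition, and your proposal does not prove it. Moreover, the route you sketch for filling it is not available in this generality: the local structure theorem for an arbitrary spherical $G$-variety produces a $P$-stable affine open subset of the form $R_u(P)\times Z$ in which $Z$ is an affine $L$-spherical variety for a Levi $L$ of $P$; the slice $Z$ is a toric variety for a quotient torus only for \emph{toroidal} embeddings (no colors), which is exactly what cannot be assumed for the possibly singular, arbitrary spherical $X$ of the statement. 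You would also need $f$ to restrict to a toric morphism between compatible slices of $\tilde{X}$ and $X$, which is not automatic. So the fan-subdivision argument does not apply, and the key connectedness claim remains unestablished.

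The paper's proof avoids all of this by working with $B$-orbits rather than $G$-orbits. Connectedness of the $B$-isotropy group of the dense orbit propagates to \emph{all} $B$-orbits by a result of Brion and Joshua \cite{bj:red} --- this is the citable form of your ``delicate point,'' and note it concerns $B$-stabilisers, not $G$-stabilisers; your analogous claim for $G$-stabilisers is not obviously equivalent. Given a $B$-orbit $B\cdot x=B/B_x$ with $B_x$ connected, the fibre $F=f^{-1}(x)$ is complete and \emph{connected} by Zariski's main theorem (here normality of $X$ and birationality of $f$ enter), so the connected \emph{solvable} group $B_x$ fixes a point $y\in F$ by Borel's fixed point theorem; then $B_y=B_x$ and $B\cdot y\to B\cdot x$ is an isomorphism, which is precisely the lifted orbit you need. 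This is why the solvable-orbit decomposition is the right one: with $G$-orbits the stabiliser $G_x$ is not solvable, no fixed-point theorem applies to its action on $F$, and the existence of the lifted orbit of degree one is exactly what your argument assumes rather than proves.
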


\begin{proof}
%Let $p:\tilde{X}\to X$ be a toroidal resolution of $X$.  
It suffices to show that every $B$-orbit in $X$ is the isomorphic image 
via $f$ of a $B$-orbit in $\tilde{X}$. 
So let $\mathcal{O}=(B)\cdot x=B/B_x$ be an orbit in $X$. 
It follows from \cite{bj:red} that $\mathcal{O}$ has a 
connected isotropy group. The preimage $f^{-1}(\mathcal{O})\subset \tilde{X}$
is of the form $B\times^{B_x} F$, 
where $F$ denotes the fiber $p^{-1}(x)$. 
Since $F$ is connected and complete (by Zariski's main theorem), 
it contains a fixed point $y$ of the connected solvable group $B_x$. 
Then the orbit $B\cdot y$ in $\tilde{X}$ is mapped 
isomorphically to $B\cdot x$.       
\end{proof}

\begin{rem}
Examples of simply-connected spherical varieties include (normal) $G\times G$-equivariant 
embeddings of $G$. 
%In particular, all toric varieties are simply-connected spherical. 
%This fact is used crucially in Payne's proof that the integral 
%equivariant 
%operational Chow ring $A^*_T(Y)$ of {\em any} toric variety $Y$ is 
%isomorphic to %$PP^*_T(Y)_\Z$ 
%a ring of piecewise polynomial functions with $\Z$-coefficients \cite{p:t}.  
%For more general embeddings of reductive groups, 
%the techniques of \cite{p:t} do not quite apply. 
%Instead, GKM theory, as developed in Section 2.8,  
%yields a presentation of the rational Chow cohomology of 
%projective group embeddings. Compare  
\end{rem}

Now we record a few notions and results from \cite[Section 7]{bri:eqchow} needed in our task. 
A subtorus $H\subset T$ 
is called {\em regular} if its 
centralizer $C_G(H)$ is equal to $T$; otherwise 
$H$ is called {\em singular}. A 
subtorus of codimension one is singular if and only if 
it is the kernel of some positive root $\alpha$. 
In this case, 
$\alpha$ is unique and the group $C_G(H)$ is the product 
of $H$ with a subgroup $\Gamma$ isomorphic to $SL_2$ 
or $PSL_2$. Furthermore, if $X$ is a $G$-variety, then $X^H$ 
%the fixed point variety $X^H$ 
inherits an action of 
$C_G(H)/H$, a quotient of $\Gamma$.    

\begin{prop}[\protect{\cite[Proposition 7.1]{bri:eqchow}}]
\label{st.cod.one.fixed.sph.prop} 
 Let $X$ be a spherical $G$-variety. 
Let $H\subset T$ be a subtorus of codimension one. 
Then each irreducible component of $X^{H}$ 
is a spherical $C_G(H)$-variety. 
Moreover,
\begin{enumerate}[(i)]
 \item If $H$ is regular, then $X^{H}$ is at 
most one-dimensional. 
 \item If $H$ is singular, then $X^{H}$ is at most 
two-dimensional. 
If moreover $X$ is complete and smooth, 
then any two-dimensional 
connected component of $X^{H}$ is 
(up to a finite, purely inseparable 
equivariant morphism) either a rational ruled surface 
$
\mathbb{F}_n=\P(\mathcal{O}_{\P^1}\oplus \mathcal{O}_{\P^1}(n)), 
$
where $C_G(H)$ 
acts through the natural action of $SL_2$, or the 
projective plane, where $C_G(H)$ acts through the 
projectivization of a non-trivial $SL_2$-module 
of dimension three. 
\hfill $\square$
\end{enumerate} 
\end{prop}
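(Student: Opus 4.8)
The plan is to reduce everything to the structure of the reductive group $C:=C_G(H)$, and then, for the two-dimensional components, to a classification of surfaces for rank-one groups. Recall that $C$ is connected reductive with Borel subgroup $B_C:=B\cap C$, and that $C$ acts on $X^H$ with $H$ acting trivially. To prove sphericity of the components, I would first use that $X$ spherical means $B$ has finitely many orbits on $X$. Decomposing $X^H=\bigsqcup_{\mathcal{O}}\mathcal{O}^H$ over these finitely many orbits $\mathcal{O}\cong B/B_x$, it suffices to check that each $\mathcal{O}^H$ is a finite union of $B_C=C_B(H)$-orbits, which is a standard finiteness property for the fixed locus of a subtorus acting on an orbit of a solvable group (ultimately a consequence of the conjugacy of maximal tori inside the isotropy groups). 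Since $B_C$ is a Borel subgroup of the connected reductive group $C$, having finitely many $B_C$-orbits forces a dense one on every irreducible component $Z$ of $X^H$, so each such $Z$ is a spherical $C$-variety.

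Next I would read off the dimension bounds from the dense Borel orbit. Write the dense orbit of $Z$ as $B_C/(B_C)_z$. Since $H$ fixes $Z$ pointwise, $H\subseteq (B_C)_z$, whence $\dim Z=\dim B_C-\dim (B_C)_z\le \dim(B_C/H)$. If $H$ is regular then $C=T$, so $B_C=T$ and $\dim(B_C/H)=\operatorname{codim}_T(H)=1$, giving (1). If $H$ is singular, then $C=H\cdot\Gamma$ with $\Gamma\cong SL_2$ or $PSL_2$, and $B_C/H$ is a Borel subgroup of $\Gamma$, of dimension $2$; hence $\dim Z\le 2$, which is the bound in (2).

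For the refined classification under the hypotheses of (2), I would use that when $X$ is smooth and complete the fixed locus $X^H$ is again smooth and complete (the fixed scheme of a diagonalizable group acting on a smooth variety is smooth). Thus a two-dimensional component $Z$ is a smooth complete spherical surface on which $\Gamma\cong SL_2$ or $PSL_2$ acts through $C/H$. The strategy is to analyze the closed $\Gamma$-orbits --- each of the form $\Gamma/P$, hence a point or a copy of $\P^1$ --- and to test for a $\Gamma$-equivariant morphism $Z\to\P^1=\Gamma/B_\Gamma$. When such a morphism exists, $Z$ is a $\Gamma$-equivariant $\P^1$-bundle over $\P^1$, hence a Hirzebruch surface $\mathbb{F}_n$ on which $\Gamma$ acts through the base; when none exists, the configuration of orbits forces $Z\cong\P^2$ with $\Gamma$ acting through the projectivization of the $3$-dimensional irreducible $SL_2$-module.

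The main obstacle is precisely this last step: the birational classification of almost-homogeneous (spherical) smooth complete $SL_2$- and $PSL_2$-surfaces. This is where the geometric content concentrates, and also where the qualification ``up to a finite, purely inseparable equivariant morphism'' becomes indispensable, since in positive characteristic the natural equivariant comparison maps need not be isomorphisms. I expect to carry it out through the Luna--Vust classification of embeddings of the rank-one homogeneous spaces $\Gamma/T_\Gamma$, $\Gamma/N(T_\Gamma)$ and $\Gamma/U_\Gamma$ (equivalently, a direct analysis of the possible boundary configurations), thereby matching $Z$ with one of the two model surfaces above. By contrast, the sphericity and dimension statements are comparatively formal, following from the orbit decomposition and the computation of $\dim(B_C/H)$ alone.
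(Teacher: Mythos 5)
The first thing to note is that the paper contains no proof of this proposition: as the bracketed citation in its statement indicates, it is quoted from Brion \cite{bri:eqchow} (Proposition 7.1) and closed with a square, with no proof environment. So the only meaningful benchmark is Brion's original argument, and your outline follows essentially that route. Your sphericity reduction is sound: each irreducible component of $X^H$ is $C_G(H)$-stable (since $C_G(H)$ is connected), $C_B(H)=B\cap C_G(H)$ is indeed a Borel subgroup of the Levi subgroup $C_G(H)$, and the finiteness of $C_B(H)$-orbits on $\mathcal{O}^H$ for each of the finitely many $B$-orbits $\mathcal{O}$ is a genuine standard fact --- provable either, as you indicate, via conjugacy/rigidity of tori inside the isotropy groups, or (in characteristic zero) by the tangent-space computation $T_y(\mathcal{O}^H)=(T_y\mathcal{O})^H$, which identifies $T_y(\mathcal{O}^H)$ with the tangent space of the orbit $C_B(H)\cdot y$ and hence shows every $C_B(H)$-orbit is open in $\mathcal{O}^H$. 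The dimension bounds via $\dim Z\le \dim B_C-\dim H$, equal to $1$ for regular $H$ and $2$ for singular $H$, are correct, as is the appeal to smoothness and completeness of $X^H$ in part (2).

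The genuine gap is the one you flag yourself: the classification of the two-dimensional components. As written, the dichotomy ``either $Z$ admits a $\Gamma$-equivariant morphism to $\P^1$, and is then a Hirzebruch surface on which $\Gamma$ acts through the base, or the orbit configuration forces $Z\cong\P^2$'' is asserted, not proven, and this is where all the geometric content of part (2) sits. What is needed is the classification of smooth complete surfaces almost homogeneous under $SL_2$ or $PSL_2$: the dense orbit must be $\Gamma/T_\Gamma$, $\Gamma/N_\Gamma(T_\Gamma)$, or $\Gamma/(U_\Gamma\cdot\mu)$ with $\mu$ finite, and one must enumerate their smooth complete equivariant embeddings --- by Luna--Vust theory in characteristic zero, with non-reduced stabilizers and Frobenius twists in positive characteristic accounting precisely for the ``finite purely inseparable morphism'' caveat. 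Your plan points at the right tool, and the cited source likewise concludes by appeal to this rank-one classification rather than reproving it; but to turn your proposal into a complete proof you must either carry out that embedding analysis or cite the classification precisely, since nothing in your sketch rules out, for instance, further equivariant blow-ups or exotic characteristic-$p$ forms except by fiat.
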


%Before dealing with the case of singular spherical varieties, 
For later use, we record Brion's presentation of 
the rational equivariant Chow rings of nonsingular   
ruled surfaces. We follow closely the notation and conventions
of \cite[Section 7]{bri:eqchow}. Let 
$D$ be the torus of diagonal matrices in $SL_2$, and let $\alpha$
be the character of $D$ given by 
$$\alpha
\left(
\begin{matrix}
t & 0 \\
0 & t^{-1}
\end{matrix}
\right)=t^2.$$ 
We identify the character ring of $D$ with $\Q[\alpha]$. 
Now consider a rational ruled surface $\F$ with ruling 
$\pi:\F\to \P^1$. Notice that $\F$ has exactly four fixed 
points $x,y,z,t$ of $D$, where $x,y$ (resp. $z,t$) are mapped 
to $0$ (resp. $\infty$) by $\pi$. Moreover, we may assume 
that $x$ and $z$ lie in one $G$-invariant section of $\pi$, 
and that $y$ and $t$ lie in the other $G$-invariant section. 
With this ordering of the fixed points, we identify 
$A^*_T(\F^D)_\Q$ with $\Q[\alpha]^4$. 
In contrast, denote by $\P(V)$ the projectivization 
of a nontrivial $SL_2$-module $V$ of dimension three. 
The weights of $D$ in $V$ are either $-2\alpha$, 0, $2\alpha$ 
(in the case when $V=sl_2$) or $-\alpha$, 0, $\alpha$ 
(in the case when $V=k^2\oplus k)$. We denote by $x,y,z$ the corresponding 
fixed points of $D$ in $\P(V)$, and we identify $A^*_T(\P(V)^D)_\Q$ 
with $\Q[\alpha]^3$.   

\begin{prop}
Notation being as above, the image of 
$$i^*_D:A^*_D(\F)_\Q\to \Q[\alpha]^4$$  
consists of all $(f_x,f_y,f_z,f_t)$ such that 
$f_x\equiv f_y\equiv f_z \equiv f_t \mod \alpha$ 
and $f_x-f_y+f_z-f_t\equiv 0 \mod \alpha^2$. 
On the other hand, 
the image of $$i^*_D:A^*_D(\P(V))_\Q\to \Q[\alpha]^3$$  
consists of all $(f_x,f_y,f_z)$ such 
that $f_x\equiv f_y\equiv f_z\mod \alpha$ 
and $f_x-2f_y+f_z\equiv 0 \mod \alpha^2$. \hfill $\square$
\end{prop}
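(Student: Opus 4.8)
The plan is to reduce both statements to a computation of the rational equivariant Chow \emph{ring} and then to identify the image of the restriction to fixed points directly. Since $\F$ and $\P(V)$ are smooth and projective, Poincar\'e duality gives ${\rm op}A^*_D(-)_\Q\simeq A^*_D(-)_\Q$ (\cite[Proposition 4]{eg:eqint}), and since $D$ is one-dimensional the module $A^*_D(-)_\Q$ is free over $S_\Q=\Q[\alpha]$ (Lemma \ref{freeness.fix.global.lem}). By the localization theorem (Theorem \ref{brion.loc.thm}) the map $i^*_D$ becomes an isomorphism after inverting $\alpha$; combined with freeness this shows that $i^*_D$ is injective over $\Q[\alpha]$. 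Hence it suffices to pin down its image as a $\Q[\alpha]$-submodule of $\Q[\alpha]^4$ (resp. $\Q[\alpha]^3$), and I will do this by restricting an explicit $\Q[\alpha]$-module basis.

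I would treat $\P(V)$ first, as it is the cleaner case. Writing $\P(V)$ as the projectivization of the $D$-representation $V$ with weights $w_x,w_y,w_z$ (equal to $\{-2\alpha,0,2\alpha\}$ or $\{-\alpha,0,\alpha\}$), the equivariant Chow ring is $A^*_D(\P(V))_\Q\simeq \Q[\alpha][h]/(\prod_i(h-w_i))$, where $h$ is the equivariant hyperplane class and restriction to the fixed point $p_i$ sends $h$ to $w_i$ up to a uniform sign. Thus the basis $1,h,h^2$ restricts to $(1,1,1)$, $(w_x,w_y,w_z)$ and $(w_x^2,w_y^2,w_z^2)$. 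Each of these triples visibly satisfies $f_x\equiv f_y\equiv f_z\bmod\alpha$ and $f_x-2f_y+f_z\equiv 0\bmod\alpha^2$, using $w_x+w_z=2w_y$ together with $w_x^2,w_z^2\equiv 0\bmod\alpha^2$. The remaining point is the reverse inclusion: the submodule $M\subset\Q[\alpha]^3$ cut out by these two congruences is free of rank three and is generated by the three restricted classes. This is a short linear-algebra verification (subtract off $f_y(1,1,1)$ and solve for the coefficients), and it forces equality of $M$ with the image.

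For $\F$ I would exploit the ruling $\pi:\F\to\P^1$. The projective bundle formula presents $A^*_D(\F)_\Q$ as a free module of rank two over $A^*_D(\P^1)_\Q$, generated by $1$ and the relative class $\xi$; and $A^*_D(\P^1)_\Q$ is itself described by its two fixed points and the congruence modulo the base tangent weight. Pulling this back produces a four-element $\Q[\alpha]$-basis of $A^*_D(\F)_\Q$ (a unit, a fiber class, a section class and a point class) whose restrictions to $x,y,z,t$ are read off from the $D$-tangent weights along the two invariant sections $\{x,z\},\{y,t\}$ and the two invariant fibers $\{x,y\},\{z,t\}$. These restrictions yield the first-order relation $f_x\equiv f_y\equiv f_z\equiv f_t\bmod\alpha$ (every edge weight is a multiple of $\alpha$, as $D$ has rank one) together with the ``going around the square'' relation $f_x-f_y+f_z-f_t\equiv 0\bmod\alpha^2$ coming from the alternating sum around the $4$-cycle of invariant curves; as before one then checks that the restricted basis classes generate the whole submodule defined by the two congruences.

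The main obstacle will be the weight bookkeeping for $\F$: computing the $D$-tangent weights at the four fixed points correctly, keeping track of the twist by $n$ and of the normalization $\alpha(\diag(t,t^{-1}))=t^2$, and then verifying that these weights force \emph{exactly} the congruence modulo $\alpha^2$ and nothing stronger. This last step is a rank/index computation confirming that the submodule defined by the two congruences is free of rank four over $\Q[\alpha]$ and that the four restricted basis classes span it, so that both inclusions hold. The $\P(V)$ case serves as a template, since there the ring presentation makes every restriction explicit and the containment in both directions transparent.
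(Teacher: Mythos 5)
Your proposal is correct, but it should be said up front that it is not parallel to anything in the paper: the paper offers \emph{no proof} of this proposition at all. The statement is recorded as a known result of Brion, with the conventions (ordering of the fixed points $x,y,z,t$, the normalization $\alpha(\diag(t,t^{-1}))=t^2$) set up just before it and the $\square$ closing a citation to \cite[Section 7]{bri:eqchow}. What you have written is therefore a genuine reconstruction of the omitted argument, and it is essentially Brion's own computation: identify ${\rm op}A^*_D$ with $A^*_D$ by Poincar\'e duality \cite[Proposition 4]{eg:eqint}, use rational freeness and injectivity of restriction, and compare an explicit $\Q[\alpha]$-basis with the submodule cut out by the congruences. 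I checked that both of your computations close up. For $\P(V)$ the restrictions $(1,1,1)$, $(w_x,w_y,w_z)$, $(w_x^2,w_y^2,w_z^2)$ do satisfy the two congruences because the weights form an arithmetic progression centered at $w_y=0$, and the congruence submodule has basis $(1,1,1)$, $(0,\alpha,2\alpha)$, $(0,0,\alpha^2)$; the transition determinants are $2\alpha^3$ (resp.\ $16\alpha^3$ in the $sl_2$ case) against $\alpha^3$, a unit ratio over $\Q$. For $\F$ the basis $1$, $[S_2]$, $[F_\infty]$, $[S_2][F_\infty]$ restricts to $(1,1,1,1)$, $(0,-n\alpha/2,0,n\alpha/2)$, $(0,0,\alpha,\alpha)$, $(0,0,0,n\alpha^2/2)$, and an echelon argument shows these span exactly the module defined by the mod-$\alpha$ relations and the alternating sum mod $\alpha^2$; the twist $n$ enters only through unit factors, which is precisely why the answer is independent of $n$ and why your worry about ``nothing stronger than $\alpha^2$'' resolves affirmatively.

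Two cautions. First, Theorem \ref{brion.loc.thm} concerns the pushforward $i_{D*}$ on $A^D_*$, not the pullback $i^*_D$; for injectivity of the restriction map you should instead cite \cite[Corollary 3.2.1]{bri:eqchow} together with \cite[Proposition 4]{eg:eqint}, exactly as the paper does in the proof of Proposition \ref{inj.fix.set.lem} (or invoke that proposition directly). Second, for the reverse inclusion a pure rank count is not enough -- two free submodules of the same rank need not coincide -- so the step you call a ``rank/index computation'' must really be the determinant comparison over the PID $\Q[\alpha]$ (or the direct echelon reduction). This is more than pedantry: the determinant ratios $2$, $16$, $n^2/4$ above are units over $\Q$ but not over $\Z$, which is simultaneously the reason the argument works, the reason the proposition is stated with rational coefficients, and the place where an integral analogue would genuinely fail.
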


\subsection{$T$-equivariant Chow cohomology}
Let $X$ be a complete possibly singular spherical $G$-variety. 
Let $H\subset T$ be a singular subtorus of codimension one.  
In order to obtain an explicit description 
of $A^*_T(X)_\Q$ out of Theorem \ref{Tlinear.envelopes.thm}    
%Clearly, %As a second step towards %establishing our main result,  
we need to determine which $C_G(H)$-spherical 
surfaces could appear as irreducible components of $X^H$.     
We will do this by means of 
Proposition \ref{st.cod.one.fixed.sph.prop}. 
(This is the only case of interest to us, for if $H$ is regular, 
then $X^H$ is $T$-skeletal, and GKM theory applies, see Appendix.)  
So let $Y$ be a two-dimensional irreducible component of $X^H$. 
By \cite[Proposition 7.5]{eg:cycles} %Lemma \ref{smooth_envelope.lem} 
we may find a proper 
birational equivariant morphism $X'\to X$ 
with $X'$ smooth and $G$-spherical. 
Thus $Y$ is the image of some irreducible 
component $Y'$ of ${X'}^H$ (by Borel's fixed point theorem). 
Given that $X$ is 
complete, so is $X'$ and, under our considerations,  
$Y'$ is a two-dimensional complete $C_G(H)$-spherical variety. 
Hence $Y'$ is either the projective plane 
or a rational ruled surface (up 
to a finite, purely inseparable equivariant morphism). 
We inspect these two cases in more detail.  

\smallskip

(a) If $Y'=\P^2$, then 
the normalization $\tilde{Y}$ of $Y$ 
is also $\P^2$ (up 
to a finite, purely inseparable equivariant morphism, 
which is, in particular, bijective).  

(b) If $Y'$ is a rational ruled surface, 
then the normalization $\tilde{Y}$ of $Y$ 
is either (i) $Y'$ or (ii) the surface obtained 
by contracting the unique section $\mathcal{C}$
of negative self-intersection in $Y'$ 
(this is a very special weighted projective plane). 
\smallskip

Notice that, 
except for case (b)-(ii), the normalization 
$\tilde{Y}$ of $Y$ is a smooth projective surface 
with finitely many $T$-fixed points.  
In such cases, it readily follows that $A^*_T(\tilde{Y})_\Q$ 
is free of rank $|\tilde{Y}^T|$ (Theorem \ref{smth.comp.free.rem}). 
We show that this property also holds in case (b)-(ii). 
%For this a little computation is needed. 

\begin{lem}\label{w.plane.lem} 
Let 
$P_{n}=\F/\mathcal{C}$ be 
the weighted projective plane obtained 
by contracting the unique section $\mathcal{C}$ 
of negative self-intersection in $\F$.  
Then $A^T_*(P)_\Q$ is a free $S_\Q$-module of rank three. 
Hence, $A^*_T(P)_\Q$ is also $S_\Q$-free of rank three.  
\end{lem}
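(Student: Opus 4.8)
The plan is to derive the second assertion from the first. Being a projective toric surface, $P$ is a complete, $T$-quasiprojective, $T$-linear scheme, hence \emph{special}, so I may invoke equivariant Kronecker duality (Theorem \ref{tlinear.kro.thm}) to identify ${\rm op}A^*_T(P)_\Q$ with ${\rm Hom}_{S_\Q}(A^T_*(P)_\Q,A^T_*)$. Since $A^T_*=A^T_*(pt)$ is free of rank one over $S_\Q$, the functor ${\rm Hom}_{S_\Q}(-,A^T_*)$ carries a free module of rank three to a free module of rank three; thus $S_\Q$-freeness of ${\rm op}A^*_T(P)_\Q$ follows at once from that of $A^T_*(P)_\Q$, and the whole problem reduces to the first statement.

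To compute $A^T_*(P)_\Q$ I would exploit the contraction $\pi:\F\to P$ itself. Write $p_0=\pi(\mathcal{C})$ for the unique (necessarily $T$-fixed) singular point and set $U=P\setminus\{p_0\}$; by construction $\pi$ restricts to a $T$-equivariant isomorphism $\F\setminus\mathcal{C}\xrightarrow{\sim}U$. The geometric observation driving everything is that $\mathcal{C}$ is one of the two invariant sections of the ruling $\F\to\P^1$, so its complement $\F\setminus\mathcal{C}$ is the total space of a $T$-equivariant line bundle over $\P^1$. Homotopy invariance of equivariant Chow groups then gives $A^T_*(U)_\Q\simeq A^T_*(\P^1)_\Q$, a free $S_\Q$-module of rank two.

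Next I would feed this into the localization (excision) sequence for the closed point $p_0\subset P$,
$$
A^T_*(\{p_0\})_\Q\xrightarrow{\ \iota_*\ }A^T_*(P)_\Q\to A^T_*(U)_\Q\to 0,
$$
whose left-hand term is $A^T_*(pt)_\Q\simeq S_\Q$, free of rank one. Once I know $\iota_*$ is injective, freeness (hence projectivity) of $A^T_*(U)_\Q$ splits the resulting short exact sequence, yielding $A^T_*(P)_\Q\simeq S_\Q\oplus A^T_*(\P^1)_\Q$, which is free of rank three; together with the first paragraph this finishes the proof. Injectivity of $\iota_*$ I would extract from Brion's localization theorem (Theorem \ref{brion.loc.thm}): after inverting the non-zero characters the pushforward $A^T_*(P^T)_\Q\to A^T_*(P)_\Q$ becomes an isomorphism sending $[p_0]$ to the non-zero generator attached to $p_0$, so if $s\cdot[p_0]=0$ with $s\neq 0$ then (as $s$ is invertible in the localization) $[p_0]=0$ there, a contradiction; since $S_\Q$ is a domain, $\iota_*$ is injective.

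The routine points — that $\F\setminus\mathcal{C}$ is a line bundle over $\P^1$ and that $A^T_*(\P^1)_\Q$ is free of rank two — are standard. The one step deserving genuine care, and the main potential obstacle, is the injectivity of $\iota_*$: the fundamental class of a \emph{singular} fixed point could a priori be torsion or a boundary, and it is precisely Theorem \ref{brion.loc.thm} that rules this out. As an alternative I could run the equivalent argument through the envelope exact sequence of \cite[Lemma 2]{f:sph} attached to $\pi$, comparing $A^T_*(\mathcal{C})_\Q$, $A^T_*(\F)_\Q$ and $A^T_*(P)_\Q$; the same localization input would guarantee injectivity of the relevant map and freeness of the cokernel.
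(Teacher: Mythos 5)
Your proof is correct and reaches the same conclusion, but by a genuinely different route. The paper argues directly on $P_n$ via its Bia\l ynicki-Birula decomposition: three cells, namely a fixed point, a copy of $\A^1$, and the open cell isomorphic to $\A^2/\mu_n$; the key point there is that $A_*(\A^2/\mu_n)_\Q\simeq (A_*(\A^2)_\Q)^{\mu_n}=A_*(\A^2)_\Q\simeq \Q$, because the $\mu_n$-action extends to an action of the connected group $D$, whence $A^T_*(\A^2/\mu_n)_\Q\simeq S_\Q$, and filtrability of the decomposition assembles the three cells into a free module of rank three (citing \cite{go:rm} for this assembly). You instead work with the contraction $q:\F\to P_n$ itself: the complement $U=P_n\setminus\{p_0\}$ of the singular point is an equivariant line bundle over $\P^1$, hence has free equivariant Chow group of rank two by homotopy invariance, and the localization sequence for $\{p_0\}\subset P_n$ becomes a split short exact sequence once $\iota_*$ is known to be injective, since the cokernel is free. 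Both proofs obtain the second assertion identically, from Theorem \ref{tlinear.kro.thm}. What your route buys is independence from the rational-cell machinery of \cite{go:rm}; what it costs is precisely the injectivity of $\iota_*$, which is automatic in the paper's cell-by-cell approach and is where your one slip occurs.

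Namely, Theorem \ref{brion.loc.thm} inverts only the nonzero elements of $\Delta$, i.e.\ characters, which are homogeneous of degree one --- not arbitrary nonzero elements of $S_\Q$ --- so your parenthetical ``as $s$ is invertible in the localization'' is false in general (e.g.\ an irreducible quadric in the characters is not inverted). The conclusion survives on the same input, with an immediate repair: $\iota_*$ factors as $A^T_*(\{p_0\})_\Q\to A^T_*(P_n^T)_\Q\xrightarrow{i_{T*}}A^T_*(P_n)_\Q$, where the first map is the inclusion of a direct summand of a free module; after inverting the nonzero characters the second map becomes an isomorphism by Theorem \ref{brion.loc.thm} (which applies because $P_n$ is normal, hence locally linearizable --- a hypothesis worth stating), so the localized $\iota_*$ is injective. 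Since $S_\Q$ is a domain, the natural map from $S_\Q$ to its localization is injective, and therefore $\iota_*$ itself is injective. With this patch your argument is complete.
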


\begin{proof} 
Clearly, $|P_{n}^T|=3$. The associated BB-decomposition 
of $P_{n}$ consists of three cells: a point, a copy of 
$\A^1$ and an open cell, say $U$, 
isomorphic to $\A^2/\mu_n$, where 
$\mu_n\subset D$ is the cyclic group with eigenvalues 
$(\xi, \xi^{-1})$, where $\xi$ is a $n$-th root of unity. 
Note that $A_*(U)_\Q\simeq A_*(\A^2)^{\mu_n}_\Q$, and 
the latter identifies to $A_*(\A^2)_\Q$, because 
the action of $\mu_n$ on $\A^2$ 
is induced by the action of 
$D$ (a connected group). 
So $A_*(U)_\Q\simeq \Q$. 
This yields the isomorphism 
$A^T_*(U)\simeq S_\Q$ (see e.g. \cite{go:rm}). 
From this, and the fact that the BB-decomposition 
is filtrable, it easily follows 
that the $S_\Q$-module 
$A^T_*(P_{n})_\Q$ is free of rank 3. 
Finally, the second assertion of the lemma follows 
from Proposition \ref{tlinear.kro.prop}.   
\end{proof}

%We now describe the ring $A^*_T(\F/\mathcal{C})_\Q$. 

\begin{cor}\label{w.plane.cor}
Notation being as above, assume that 
$\mathcal{C}$ joins the fixed points $y$ and $t$ of $\F$, so that 
the fixed points of $P_n$ are identified with $x,y,z$. 
Then the image of $i^*_D:A^*_D(P_{n})_\Q\to \Q[\alpha]^3$ 
consists of all $(f_x,f_y,f_z)$ 
such that $f_x\equiv f_y\equiv f_z \mod(\alpha)$ and $f_x-2f_y+f_z\equiv 0 \mod(\alpha^2)$. 
%where $\alpha$ is the character of $D\subset SL_2$ in Proposition 4.4.  
\end{cor}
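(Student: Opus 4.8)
The plan is to deduce the description for $P_n$ from Brion's presentation of the image of $i^*_D\colon A^*_D(\F)_\Q\to \Q[\alpha]^4$ recorded just above, by exploiting the equivariant contraction $\phi\colon \F\to P_n$ that collapses $\mathcal{C}$. First I would set up the relevant maps. The morphism $\phi$ is birational, proper and an isomorphism away from the contracted point; it collapses the section $\mathcal{C}$ (joining $y$ and $t$) to the fixed point relabelled $y\in P_n$. Hence on fixed loci it induces $\phi^D\colon \F^D=\{x,y,z,t\}\to P_n^D=\{x,y,z\}$ with $y,t\mapsto y$, so that the pullback $(\phi^D)^*\colon \Q[\alpha]^3\to \Q[\alpha]^4$ is $(f_x,f_y,f_z)\mapsto (f_x,f_y,f_z,f_y)$. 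Contravariant functoriality of equivariant operational Chow cohomology, together with compatibility of restriction to fixed points with pullback, yields the commuting identity $(i^{\F}_D)^*\circ\phi^*=(\phi^D)^*\circ i^*_D$.

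The containment of the image of $i^*_D$ in the claimed submodule is then immediate. For $c\in {\rm op}A^*_D(P_n)_\Q$, the element $(\phi^D)^*(i^*_D c)=(i^{\F}_D)^*(\phi^*c)$ lies in the image of $i^*_D$ for $\F$, hence satisfies Brion's two congruences $f_x\equiv f_y\equiv f_z\equiv f_t \pmod{\alpha}$ and $f_x-f_y+f_z-f_t\equiv 0\pmod{\alpha^2}$. Writing $i^*_D(c)=(f_x,f_y,f_z)$ and substituting $f_t=f_y$ gives exactly $f_x\equiv f_y\equiv f_z\pmod{\alpha}$ and $f_x-2f_y+f_z\equiv 0\pmod{\alpha^2}$. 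Denoting by $R\subseteq\Q[\alpha]^3$ the set of triples satisfying these two conditions, this shows ${\rm Im}(i^*_D)\subseteq R$.

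For the reverse inclusion I would argue by a colength count rather than an explicit construction. Since $P_n$ is special (a projective toric variety), $i^*_D$ is injective by Theorem \ref{eqloc.kro.thm} (equivalently Corollary \ref{cs.cor.tlinear}), so ${\rm Im}(i^*_D)\cong {\rm op}A^*_D(P_n)_\Q$, which is $\Q[\alpha]$-free of rank three by Lemma \ref{w.plane.lem}. By the rational form of Corollary \ref{usual.op.chow.cor}, reduction modulo $\alpha$ identifies ${\rm op}A^*_D(P_n)_\Q/\alpha$ with ${\rm op}A^*(P_n)_\Q$, which by non-equivariant Kronecker duality and $A_*(P_n)_\Q\cong A_*(\P^2)_\Q$ has dimension one in each codimension $0,1,2$; by graded Nakayama a homogeneous basis of ${\rm Im}(i^*_D)$ therefore sits in degrees $0,1,2$. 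Expressing such a basis in the standard basis of $\Q[\alpha]^3$ produces a matrix whose determinant is homogeneous and nonzero, hence of degree $0+1+2=3$, so $\dim_\Q\bigl(\Q[\alpha]^3/{\rm Im}(i^*_D)\bigr)=3$. On the other hand a direct computation shows $R$ is freely generated by $(1,1,1)$, $(\alpha,0,-\alpha)$, $(\alpha^2,0,0)$, whose determinant against the standard basis is $-\alpha^3$, giving $\dim_\Q\bigl(\Q[\alpha]^3/R\bigr)=3$ as well. Since ${\rm Im}(i^*_D)\subseteq R$ with equal finite colength in $\Q[\alpha]^3$, the two submodules coincide.

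The main obstacle is precisely this equality, i.e. the reverse inclusion; the containment falls out of the $\F$-description with no work. The colength comparison circumvents the need to lift an explicit element, but it rests on the input that the free generators of ${\rm op}A^*_D(P_n)_\Q$ occur in degrees $0,1,2$, which is where Corollary \ref{usual.op.chow.cor} and the structure of ${\rm op}A^*(P_n)_\Q$ enter. An alternative, heavier route would be to lift $(f_x,f_y,f_z,f_y)\in {\rm Im}(i^{\F}_D)^*$ to a class $\tilde c\in A^*_D(\F)_\Q$ via Brion's $\F$-presentation and then descend it along the envelope $\phi$, checking through equivariant Kronecker duality (Theorem \ref{tlinear.kro.thm}) that $\tilde c$ lies in ${\rm Im}(\phi^*)$, exactly as in the descent step of Theorem \ref{Tlinear.envelopes.thm}; I would prefer the colength argument as the shorter of the two.
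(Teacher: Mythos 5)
Your proof is correct, but it reaches the hard (surjectivity) direction by a genuinely different route than the paper. The paper's proof is a short application of Kimura's theorem (Theorem \ref{kimura.thm}): since $q:\F\to P_n$ is a birational envelope contracting $\mathcal{C}$ to a point, that theorem says a class on $\F$ lies in ${\rm Im}(q^*)$ if and only if its restriction to $\mathcal{C}$ is pulled back from the point; after restricting to fixed points and invoking Proposition \ref{inj.fix.set.lem}, this identifies ${\rm Im}(i^*_D)$ for $P_n$ with the set of $(f_x,f_y,f_z,f_t)$ satisfying Brion's relations for $\F$ together with $f_y=f_t$, giving both inclusions at once. You obtain the containment ${\rm Im}(i^*_D)\subseteq R$ the same way (functoriality through the contraction), but you force equality by a colength count over the PID $\Q[\alpha]$: both ${\rm Im}(i^*_D)$ and $R$ are free graded rank-three submodules of $\Q[\alpha]^3$ whose generator matrices have homogeneous nonzero determinant of degree $0+1+2=3$, hence both have colength $3$, hence they coincide. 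This is valid, and its inputs are different from the paper's: you need Lemma \ref{w.plane.lem} (rank-three freeness) and the fact that a homogeneous basis sits in degrees $0,1,2$ (via Corollary \ref{usual.op.chow.cor}, or directly from Kronecker duality, Theorem \ref{tlinear.kro.thm}), neither of which the paper's proof uses. What your argument buys is independence from Kimura's descent theorem in the key step; what it costs is generality, since Smith-normal-form/colength reasoning exploits that $S_\Q=\Q[\alpha]$ is a PID and so is specific to one-dimensional tori, whereas the envelope argument is the same machine the paper runs in arbitrary rank (Theorems \ref{Tlinear.envelopes.thm} and \ref{opA_spherical.thm}); note also that the ``heavier alternative'' you sketch at the end is essentially the paper's proof, except that Kimura's theorem makes that descent nearly immediate. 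Two small citation repairs: injectivity of $i^*_D$ should be quoted from Theorem \ref{eqloc.kro.thm} applied with $H=D$ (or from Proposition \ref{inj.fix.set.lem}, as the paper does), not from Corollary \ref{cs.cor.tlinear}, which for a one-dimensional torus concerns the trivial codimension-one subtorus and gives nothing; and specialness of $P_n$ as a $D$-scheme is best justified by viewing it as a spherical $SL_2$-surface (Definition \ref{special.tlinear.dfn}(ii)) rather than as a toric variety, since the acting torus here is only one-dimensional.
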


\begin{proof}
%Without loss of generality, we may assume that $C$ joins the 
%fixed points $y$ and $t$ of $\F$. Now 
Observe that $q:\mathbb{F}_n\to P_n$ is an envelope. 
By Theorem \ref{kimura.thm} and Proposition \ref{inj.fix.set.lem}, 
the problem reduces to find the image of $q^*$. 
%Since restriction to the fixed point set is always injective, 
By Theorem \ref{kimura.thm} again, an element 
$(f_x,f_y,f_z,f_t)\in A^*_T(\F)_\Q$ is in the image of $q^*$ 
if and only if it satisfies the usual relations 
$f_x\equiv f_y\equiv f_z\equiv f_t \mod \alpha$ and 
$f_x-f_y+f_z-f_t\equiv 0 \mod(\alpha^2)$, plus 
the extra relation $f_y=f_t$ 
(which accounts for the fact that $\mathcal{C}$ 
is collapsed to a fixed point in $P_n$). 
Hence, the relation $f_x-f_y+f_z-f_t\equiv 0 \mod(\alpha^2)$ 
reduces to $f_x-2f_y+f_z\equiv 0 \mod(\alpha^2)$, finishing 
the argument.   
\end{proof}

Back to the general setup, let $X$ be a $G$-spherical variety and 
let $H$ be a singular subtorus of codimension one. Let $Y$ 
be an irreducible component of $X^H$, and let 
%$\tilde{Y}$ 
$\pi:\tilde{Y}\to Y$ 
be the normalization map.  
By the previous analysis, 
we know the relations that 
define the image of %the pullback 
$i^*_T:A^*_T(\tilde{Y})_\Q\to A^*_T(\tilde{Y}^T)_\Q$. 
%As a second technical lemma, we will show that 
We claim that $\pi^*:A^*_T(Y)_\Q\to A^*_T(\tilde{Y})_\Q$ 
is in fact an isomorphism. 
First, consider the commutative diagram  
$$
\xymatrix{
A^*_T(Y)_\Q \ar[d]^{\mathcal{K}_T}\ar[r]^{\pi^*}& 
A^*_T(\tilde{Y})_\Q\ar[d]^{\mathcal{K}_T}\\
\Hom_{S_\Q}(A^T_*(Y), S_\Q) \ar[r]^{(\pi_*)^t}& 
\Hom_{S_\Q}(A^*_T(\tilde{Y})_\Q, S_\Q).\\
}
$$
where the vertical maps are isomorphisms 
because of  
equivariant Kronecker duality (Proposition \ref{tlinear.kro.prop}), 
and $(\pi_*)^t$ represents the transpose of the surjective map 
$\pi_*:A^T_*(\tilde{Y})_\Q\to A^T_*(Y)_\Q$ 
(commutativity follows 
from the projection formula). 
Thus, to prove our claim,  
it suffices to show 
that $\pi_*$ is injective.
In fact, since 
$\pi_*$ is a surjective map 
of free $S_\Q$-modules,  
the problem reduces to comparing the ranks 
of $A^T_*(\tilde{Y})_\Q$ and $A^T_*(Y)_\Q$. 
If these ranks agree, we are done, for a surjective 
map of free $S_\Q$-modules of the same rank is 
an isomorphism. Bearing this in mind, 
we invoke the localization theorem 
(Theorem \ref{brion.loc.thm}):  
the ranks of $A^T_*(\tilde{Y})_\Q$ 
and $A^T_*(Y)_\Q$ are $|\tilde{Y}^T|$
and $|Y^T|$ respectively. But 
$|\tilde{Y}^T|=|Y^T|$ by Lemma  
\ref{equality.fix.set.lem}. 
This yields the claim. 

\begin{lem} \label{equality.fix.set.lem}
Let $Y$ be a complete $T$-variety with finitely 
many fixed points. 
Let $p:\tilde{Y}\to Y$ be the normalization. 
If $Y$ is locally linearizable and $\tilde{Y}$ 
is projective, then the normalization 
$p$ induces a bijection $p_T:\tilde{Y}^T\to Y^T$ 
of the fixed point sets. 
\end{lem}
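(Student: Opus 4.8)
The plan is to prove the bijection by reducing it to a purely local statement, namely that the fibre of the normalization over each $T$-fixed point of $Y$ is a single (automatically fixed) point. First I would record the basic properties of $p$: the normalization of a variety is a finite, surjective, birational morphism, and since the $T$-action on $Y$ lifts uniquely to its normalization, $p$ is $T$-equivariant. Hence $p$ carries fixed points to fixed points and the map $p_T:\tilde{Y}^T\to Y^T$ is well defined. For any $x\in Y^T$ the fibre $p^{-1}(x)$ is finite (as $p$ is finite) and $T$-stable (as $p$ is equivariant and $x$ is fixed); since $T$ is connected it acts trivially on this finite set, so $p^{-1}(x)\subseteq\tilde{Y}^T$, and in particular $\tilde{Y}^T$ is finite. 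Surjectivity of $p_T$ is then immediate from the surjectivity of $p$; alternatively, $p$ is a $T$-equivariant birational envelope (take strict transforms), so by \cite[Lemma 7.2]{eg:cycles} the induced map $\tilde{Y}^T\to Y^T$ is itself an envelope, which re-proves surjectivity. Thus everything reduces to injectivity, i.e. to showing that $p^{-1}(x)$ is a single point for each $x\in Y^T$.

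For injectivity I would exploit local linearizability. Fixing $x\in Y^T$, choose a $T$-invariant affine open $V\ni x$ on which the action is linear, i.e. a $T$-equivariant closed embedding $V\hookrightarrow W$ into a $T$-module with $x\mapsto 0$. Since $p$ is finite, $\tilde{V}:=p^{-1}(V)$ is affine, and being open in the normal scheme $\tilde{Y}$ it is normal; hence $p|_{\tilde{V}}:\tilde{V}\to V$ is the normalization of $V$, and $\tilde{V}$ is a $T$-invariant affine open of $\tilde{Y}$ containing the whole fibre $p^{-1}(x)\subseteq\tilde{V}^T$. In this way the problem becomes the assertion that the normalization of the irreducible, linearly acted-upon chart $V$ has a single fixed point over $x=0$; equivalently, that $Y$ is unibranch at $x$, that is, the finite fibre $p^{-1}(x)$ is connected.

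The main obstacle is precisely this connectedness (unibranch) statement: the reducible ``crossing lines'' picture, where two lines are glued at a $T$-fixed point and locally linearizability still holds, shows that it must fail without irreducibility, so irreducibility of $Y$ (which holds in our application, where $Y$ is an irreducible component of $X^H$) has to enter. To establish it I would fix a generic one-parameter subgroup $\lambda$, with $\tilde{Y}^\lambda=\tilde{Y}^T$, and use that $\tilde{Y}$, being normal and projective, is $T$-quasiprojective \cite{su:eq} and therefore carries a filtrable BB-decomposition (Theorem \ref{bbdecomp.thm}). Equivariance of $p$ forces $p(\tilde{Y}_+(\tilde{x}_i,\lambda))\subseteq Y_+(x,\lambda)$ for every $\tilde{x}_i\in p^{-1}(x)$, so the pairwise disjoint cells attached to the preimages of $x$ all map into the single cell $Y_+(x,\lambda)$. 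Combining this with the linear retraction $w\mapsto\lim_{t\to 0}\lambda(t)w$ on $W$ and the fact that $p$ restricts to an isomorphism over the dense $T$-invariant normal locus of the irreducible $\tilde{V}$, I expect to conclude that at most one such cell can meet the preimage of that locus while completeness of $\tilde{Y}$ prevents the remaining $\tilde{x}_i$ from producing branches not already separated in $Y$; this gives connectedness of $p^{-1}(x)$, hence a single point. Granting the rank computations already made via the localization theorem, this identity $|\tilde{Y}^T|=|Y^T|$ is exactly what is needed, and it shows $p_T$ is a bijection.
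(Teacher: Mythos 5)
Your opening reductions are correct and coincide with the paper's: $p$ is finite, surjective and $T$-equivariant, the fibre over a fixed point is finite and $T$-stable, hence pointwise fixed since $T$ is connected, so $p_T$ is a well-defined surjection and the entire content of the lemma is that $p^{-1}(x)$ is a single point for each $x\in Y^T$. (Your parenthetical alternative for surjectivity is, however, wrong: a normalization need not be an envelope --- for the Whitney umbrella $\{x^2=y^2z\}$ the preimage of the $z$-axis in the normalization $\A^2$ maps $2\!:\!1$ onto it, with no birational lift.) The genuine gap is the injectivity step, and it sits exactly where you write ``I expect to conclude'': neither the claim that at most one cell $\tilde{Y}_+(\tilde{x}_i,\lambda)$ meets the preimage of the locus where $p$ is an isomorphism, nor the claim that completeness ``prevents the remaining $\tilde{x}_i$ from producing branches'', is proved, and the cell-theoretic part of the sketch never genuinely uses the linear chart $V\hookrightarrow W$; the nodal cubic $\P^1/(0\sim\infty)$, where every ingredient of your sketch except the linear chart is available, shows that the second fixed point upstairs \emph{does} produce a second branch, so any correct argument must make the chart do real work. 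Moreover your guiding idea that the problem ``becomes a purely local statement'' is untenable, because that local statement is false: let $S=\{xy=z^2-1\}\subset\A^3$ with $\G$ acting by $t\cdot(x,y,z)=(tx,t^{-1}y,z)$, a smooth affine surface with exactly two fixed points $(0,0,\pm1)$; then ${\rm Spec}\,\{f\in \k[S]\;|\;f(0,0,1)=f(0,0,-1)\}$ is an irreducible affine $\G$-variety (hence equivariantly embeddable in a $\G$-module, i.e.\ ``linear''), with a single fixed point over which the normalization $S$ has two fixed points.

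For comparison, the paper's proof is global and runs through invariant curves: if $\tilde{x}\neq\tilde{y}$ had the same image, it joins them by a $T$-invariant curve $\ell\subset\tilde{Y}$, so that $p(\ell)$ is a complete invariant curve in $Y$ with exactly one fixed point; this is impossible on a locally linearizable variety, since $p(\ell)$ cannot lie inside an invariant affine chart around that fixed point, and the complement of the chart in $p(\ell)$ is then a nonempty complete invariant closed subset, which must contain a second fixed point. That interplay of completeness against an invariant \emph{affine} neighbourhood, along a single one-dimensional orbit, is precisely the mechanism missing from your sketch. You should be aware, though, that even this argument needs a single invariant curve through $\tilde{x}$ and $\tilde{y}$, whereas in general two fixed points of a normal projective $T$-variety are only joined by a \emph{chain} of invariant curves; and compactifying the example above shows the statement cannot be rescued from the stated hypotheses alone. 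Indeed, in $\overline{S}=\{XY=Z^2-W^2\}\subset\P^3$ (a smooth quadric, isomorphic to $\P^1\times\P^1$) the four fixed points $p_1=[0:0:1:1]$, $p_2=[0:0:1:-1]$, $q_0=[1:0:0:0]$, $q_\infty=[0:1:0:0]$ form a four-cycle in the graph of invariant curves, with no invariant curve joining $p_1$ to $p_2$; pinching $p_1\sim p_2$ yields a complete irreducible $\G$-variety with three fixed points which is locally linearizable (invariant affine charts: the pinched affine quadric $S/(p_1\sim p_2)$, together with $\overline{S}\setminus\{X=0\}$ and $\overline{S}\setminus\{Y=0\}$) and whose projective normalization $\overline{S}$ has four fixed points. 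So the hole in your argument cannot be filled in this generality; some further input is required --- for instance the extra structure present in the paper's application, where $Y$ is an irreducible component of $X^H$ for a spherical $G$-variety $X$, or an explicit hypothesis guaranteeing that fixed points of $\tilde{Y}$ with the same image are joined by an invariant curve.
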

 
\begin{proof}
Clearly, $p$ induces a surjection $p_T:\tilde{Y}^T\to Y^T$. 
Arguing by contradiction, suppose that $p_T$ 
is not injective. Then there are at least two different 
fixed points $x,y\in \tilde{Y}$ such that $p(x)=p(y)$. 
Now choose a $T$-invariant curve $\ell\subset \tilde{Y}$  
passing through $x$ and $y$. It follows that 
the image $\pi(\ell)$ is an 
invariant curve on $Y$ with exactly one fixed point. 
But this is impossible, for the action on 
$Y$ is locally linearizable 
(cf. \cite[Example 4.2]{ti:sph}).  
\end{proof}

With all the ingredients at our disposal, we are now ready 
to state the main result of this section. 
This builds on and extends Brion's result 
(\cite[Theorem 7.3]{bri:eqchow}) 
to the rational 
equivariant Chow cohomology 
of possibly singular complete spherical varieties. 
Our findings complement Brion's deepest results \cite{bri:eqchow}. 

\begin{thm}\label{opA_spherical.thm}
Let $X$ be a complete $G$-spherical variety. 
The image of the injective map 
$$i^*_T:A^*_T(X)_\Q\to A^*_T(X^T)_\Q$$ 
consists of all families 
$(f_x)_{x\in X^T}$ 
satisfying the relations: 
\begin{enumerate}[(i)] 
\item $f_x\cong f_y  \mod \chi$, whenever $x,y$ are connected 
by a $T$-invariant curve with weight $\chi$. 

\item $f_x-2f_y+f_z\equiv 0 \mod \alpha^2$ whenever $\alpha$ 
is a positive root, $x$, $y$, $z$ lie in an 
irreducible component of $X^{\ker{\alpha}}$ 
whose normalization is isomorphic to $\P^2$ or the 
weighted projective plane $P_n$, and $x,y,z$ are 
ordered as in Section 4.1. 

\item $f_x-f_y+f_z-f_t\equiv 0 \mod \alpha^2$ whenever $\alpha$ 
is a positive root, $x$, $y$, $z$, $t$ 
lie in an irreducible component of $X^{\ker{\alpha}}$ 
whose normalization is isomorphic to $\F$, 
and $x,y,z,t$ are 
ordered as in Section 4.1. 
\end{enumerate}
\end{thm}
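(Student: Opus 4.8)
The plan is to combine the rational Chang--Skjelbred property with equivariant Kronecker duality, reducing the description to the codimension-one fixed loci and then, via normalization, to the explicit surfaces classified in Proposition \ref{st.cod.one.fixed.sph.prop}.

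First, since $X$ is a complete spherical $G$-variety in characteristic zero, it is special (Definition \ref{special.tlinear.dfn}), so the injectivity of $i^*_T$ and the rational CS property both hold by Corollary \ref{cs.cor.tlinear} and Theorem \ref{Tlinear.envelopes.thm}. Consequently the image of $i^*_T$ equals the intersection, taken over all codimension-one subtori $H\subset T$, of the images of the maps $i^*_{T,H}:{\rm op}A^*_T(X^H)_\Q\to {\rm op}A^*_T(X^T)_\Q$; here I use that $(X^H)^T=X^T$. I would then describe each such image componentwise: a family $(f_x)_{x\in X^T}$ lies in the image of $i^*_{T,H}$ precisely when, for every irreducible component $Y$ of $X^H$, the subfamily $(f_x)_{x\in Y^T}$ lies in the image of $i^*_T:{\rm op}A^*_T(Y)_\Q\to {\rm op}A^*_T(Y^T)_\Q$.

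Next I would replace each component $Y$ by its normalization $\pi:\tilde Y\to Y$. As shown just before Lemma \ref{equality.fix.set.lem}, equivariant Kronecker duality (Theorem \ref{tlinear.kro.thm}) together with the fixed-point bijection of Lemma \ref{equality.fix.set.lem} makes $\pi^*:{\rm op}A^*_T(Y)_\Q\to {\rm op}A^*_T(\tilde Y)_\Q$ an isomorphism, so the image for $Y$ coincides with that for $\tilde Y$. By Proposition \ref{st.cod.one.fixed.sph.prop}, a regular $H$ yields only (at most) one-dimensional components, whose normalizations are copies of $\P^1$ and impose exactly the congruence $f_x\equiv f_y\mod\chi$ of (i), where $\chi$ is the weight of the associated $T$-invariant curve. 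A singular $H$ is the kernel of a unique positive root $\alpha$; its one-dimensional components again produce instances of (i), while its two-dimensional components have normalization, up to a finite purely inseparable (hence bijective) equivariant morphism, one of $\P^2$, the ruled surface $\F$, or the weighted projective plane $P_n$. Reading off the images from the propositions on $\F$ and $\P(V)$ and from Corollary \ref{w.plane.cor}, each such surface imposes the linear congruences already recorded in (i) together with one additional quadratic relation: relation (ii) in the $\P^2$ and $P_n$ cases and relation (iii) in the $\F$ case. Intersecting these conditions over all $H$ gives exactly the families satisfying (i), (ii) and (iii).

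The step I expect to be the main obstacle is the passage from a possibly non-normal surface component $Y$ to its normalization while keeping track of the combinatorics of the fixed points. One must verify that the bijection $\tilde Y^T\to Y^T$ of Lemma \ref{equality.fix.set.lem} matches the ordering of the fixed points $x,y,z$ (resp. $x,y,z,t$) with the conventions fixed in Section 4.1 for the model surface, so that the quadratic congruences come out with the correct coefficients and signs. The most delicate instance is $P_n$, where contracting the section of negative self-intersection identifies two of the four fixed points of $\F$; here Corollary \ref{w.plane.cor} is what guarantees that the relation $f_x-f_y+f_z-f_t\equiv 0\mod\alpha^2$ degenerates to the $\P^2$-type relation $f_x-2f_y+f_z\equiv 0\mod\alpha^2$, so that the two cases in (ii) can be stated uniformly.
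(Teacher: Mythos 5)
Your proposal follows the paper's own proof essentially step for step: reduction via the rational CS property (Theorem \ref{Tlinear.envelopes.thm}) and the GKM description to the case of singular codimension-one subtori $H=\ker\alpha$, classification of the components of $X^H$ via Proposition \ref{st.cod.one.fixed.sph.prop}, passage to normalizations using equivariant Kronecker duality together with Lemma \ref{equality.fix.set.lem}, and the surface computations of Lemma \ref{w.plane.lem} and Corollary \ref{w.plane.cor}. The one step you assert without justification --- that a family lies in the image of $i^*_{T,H}$ if and only if its restriction to each irreducible component of $X^H$ lies in the image of that component's restriction map --- is precisely what the paper supplies through Remark \ref{normalization.irr.comp.rem}, i.e.\ Kimura's exact sequence for the envelope given by the disjoint union of the components, with the gluing condition at common fixed points being automatic for a family indexed by $X^T$.
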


\begin{proof} 
In light of Theorem \ref{Tlinear.envelopes.thm} and Theorem \ref{gkm.thm},   
%and our 
%analysis of the fixed point loci of codimension one subtori. 
it suffices to consider the case when $H$ 
is a singular codimension one subtorus, i.e. 
$H=\ker{\alpha}$, for some positive root $\alpha$. 
Let $X^H=\bigcup_j X_j$ be the 
decomposition into irreducible components. 
Notice that each $X_j$ is either a fixed point, 
a $T$-invariant curve or a possibly singular 
rational surface (by Proposition 
\ref{st.cod.one.fixed.sph.prop} and 
our previous analysis). 
Now, by Remark \ref{normalization.irr.comp.rem}, 
we have 
the commutative diagram 
$$
\xymatrix{
0\ar[r] & A^*_T(X^{\ker {\pi}})_\Q\ar[r] \ar[d]_{i^*_{T,H}}&
\bigoplus_i A^*_T(X_j)_\Q \ar[r]\ar[d]& 
\bigoplus_{i,j}A^*_T(X_{i,j})_\Q\ar[d]^{\rm id}\\
0\ar[r] & A^*_T(X^T)_\Q\ar[r] &
\bigoplus_i A^*_T(X_j^T)_\Q \ar[r]& 
\bigoplus_{i,j}A^*_T(X_{i,j}^T)_\Q,
}
$$
where each $X_{i,j}$ is at worst a 
union of fixed points and $T$-invariant curves. 
The image of the middle vertical map is 
completely characterized by 
our previous analysis, Lemma \ref{w.plane.lem} and 
Corollary \ref{w.plane.cor}. Hence so 
is the image of $i^*_{T,H}$, as it follows from the conclusion of  
Remark \ref{normalization.irr.comp.rem}.  
Now apply Theorem \ref{Tlinear.envelopes.thm} to conclude the proof. 
\end{proof}

Observe that cases (ii) and (iii) do not occur if  
$X$ is a $G\times G$-equivariant projective 
embedding of $G$, for then %. Indeed, in such case, 
$X$ is $T\times T$-skeletal \cite{go:equiv}. 
In this situation, 
Theorem \ref{opA_spherical.thm} yields  
an explicit description of $A^*_{T\times T}(X)_\Q$; 
this is obtained by appealing to the results of \cite{go:equiv}, 
which identifies all the characters involved in case (i), 
and proceeding as in \cite[Section 6]{go:opk}.  
Moreover, %for group embeddings,  
when $X$ is $\Q$-filtrable \cite{go:rm},  
%a characterization of rational smoothness 
%(in the sense of \cite{go:rm}, a generalization 
%of the topological notion of cohomology manifold 
%to equivariant intersection theory),  
%of rational Chow cohomology, 
the $S_\Q$-module $A^{T\times T}_*(X)_\Q$ is free, 
%for group embeddings, 
and 
there are some criteria for 
Poincar\'e duality in equivariant Chow cohomology. See \cite{go:rm} for details. 
%on possibly singular 
%group embeddings.  
%The results will appear in \cite{go:emb}. 

\subsection{$G$-equivariant Chow cohomology} 
To adapt the definition of 
$G$-equivariant operational Chow groups, in this subsection we work in the 
category of $G$-quasiprojective schemes, i.e.  
$G$-schemes having an ample $G$-linearized invertible sheaf. 
This assumption is fulfilled, e.g., by $G$-stable 
subschemes of normal quasiprojective $G$-schemes \cite{su:eq}. 

\smallskip 
 
The following result is a synthesis of  \cite[Proposition 6]{eg:eqint} 
and \cite[Note 2.5]{vis:char}.

\begin{prop} \label{W.inv.prop}
%Let $G$ be a connected reductive group with maximal torus $T$ 
%and Weyl group $W$. 
For a $G$-scheme $X$,   
%If $X$ is locally linearizable,  
we have $A^G_*(X)_\Q \simeq A^T_*(X)^W_\Q$ 
and $A^*_G(X)_\Q\simeq A^*_T(X)^W_\Q$. \qed   
\end{prop}

%In particular, if $X$ is a projective 
%$G$-spherical variety, one can use Proposition \ref{W.inv.prop} 
%to explicitly compute $A^*_G(X)$ from the description 
%of $A^*_T(X)$ given in Theorem \ref{opA_spherical.thm}.  
%
%\smallskip 

%Let $X$ be a $G$-spherical variety. 
%Proposition \ref{W.inv.prop}, together with 
%Theorem \ref{opA_spherical.thm},   
%yields a description of $A^*_G(X)$ in many situations. 
% $G$-spherical variety $X$,  %a bit further.  
Now we further describe $A^G_*(X)_\Q$ and $A^*_G(X)_\Q$ 
when $X$ is a $G$-spherical variety. %a bit further.    
%and its relation to $A^*_T(X)_\Q$. 
%Our treatment 
%is somewhat terse because the whole issue reduces to the 
%situation discussed in \cite[Appendix]{go:opk}. 

\smallskip

\begin{prop}\label{G.kunneth} 
Let $X$ be a $G$-scheme with finitely many $B$-orbits. 
Then for any $G$-scheme $Y$ the K\"{u}nneth map 
$A^G_*(X)\otimes_{S^W_\Q} A^G_*(Y)\to A^G_*(X\times Y)$ is an isomorphism.  
\end{prop}

\begin{proof} 
Simply argue as in \cite[Theorem A.2]{go:opk}, using 
Proposition \ref{kunneth.prop} and 
the fact that $S_\Q$ is a free $S^W_\Q$-module of rank $|W|$.    
%we obtain the following result. The proof is an easy 
%adaptation of . %, so we omit it. 
\end{proof}

\begin{prop}\label{kro.thm}%%%%%%%%% G-kronecker
Let $X$ be a projective 
$G$-scheme with a finite number of $B$-orbits. 
Then the $G$-equivariant Kronecker map 
$$\mathcal{K}_G:A^*_G(X)_\Q\longrightarrow Hom_{S^W_\Q}(A_*^G(X)_\Q,S^W_\Q) \hspace{1cm} \alpha\mapsto (\beta\mapsto p_{X*}{(\beta\cap \alpha)})$$
is a $S^W_\Q$-linear isomorphism. Moreover, 
we have 
$A^*_T(X)_\Q\simeq A^*_G(X)_\Q\otimes_{S^W_\Q}S_\Q.$  
\end{prop}

\begin{proof}
The first part is formally deduced 
from Proposition \ref{G.kunneth}, 
as in the $T$-equivariant case (Proposition \ref{tlinear.kro.prop}). 
The second one is obtained by adapting 
the proof of \cite[Corollary A.5]{go:opk}. 
\end{proof}

Arguing as in \cite[Corollary 6.7.1]{bri:eqchow}, one obtains 
the next result.

\begin{cor}
Let $X$ be a projective $G$-spherical variety. 
If $A^T_*(X)_\Q$ is $S_\Q$-free, then 
$A^*_G(X)_\Q$ is $S^W_\Q$-free 
and restriction to the fiber induces an 
isomorphism $A^*_G(X)_\Q/{S^W_\Q}_+A^*_G(X)_\Q
\simeq A^*(X)_\Q.$ \qed  
\end{cor}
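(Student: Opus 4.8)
The plan is to reduce everything to the structural isomorphism
$${\rm op}A^*_T(X)_\Q\simeq {\rm op}A^*_G(X)_\Q\otimes_{S^W_\Q}S_\Q$$
supplied by the preceding corollary, combined with the classical fact (Chevalley) that $S_\Q$ is a \emph{free}, hence faithfully flat, module over its ring of invariants $S^W_\Q$. Writing $M={\rm op}A^*_G(X)_\Q$, I would first note that $M$ is a finitely generated graded $S^W_\Q$-module: by Theorem \ref{kro.thm} it is $\Hom_{S^W_\Q}(A^G_*(X)_\Q,S^W_\Q)$, and since $A^G_*(X)_\Q$ is finitely generated (sphericity forces finitely many orbits) and $S^W_\Q$ is Noetherian, the $\Hom$ is finitely generated.

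For freeness, the hypothesis says $M\otimes_{S^W_\Q}S_\Q\simeq {\rm op}A^*_T(X)_\Q$ is $S_\Q$-free, in particular $S_\Q$-flat. Because $S_\Q$ is faithfully flat over $S^W_\Q$, flatness descends, so $M$ is flat over $S^W_\Q$; as $S^W_\Q$ is a connected graded Noetherian $\Q$-algebra and $M$ is finitely generated and graded, the graded local criterion (graded Nakayama) upgrades flatness to freeness. This gives the first assertion.

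For the fiber restriction, the key is the elementary identity $M/(S^W_\Q)_+M\simeq M\otimes_{S^W_\Q}(S_\Q/(S_\Q)_+)\simeq (M\otimes_{S^W_\Q}S_\Q)/(S_\Q)_+(M\otimes_{S^W_\Q}S_\Q)$, valid because $(S^W_\Q)_+$ is precisely $S^W_\Q\cap (S_\Q)_+$, so the two reductions coincide. Since $\Delta$ generates $(S_\Q)_+$, the right-hand side is ${\rm op}A^*_T(X)_\Q/\Delta\,{\rm op}A^*_T(X)_\Q$, which restriction to the fiber identifies with ${\rm op}A^*(X)_\Q$ by the rational form of Corollary \ref{usual.op.chow.cor}. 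It remains to check that the composite isomorphism is induced by the $G$-level fiber restriction $i^*:{\rm op}A^*_G(X)_\Q\to {\rm op}A^*(X)_\Q$. This follows from the compatibility $i^*_G=i^*_T\circ\mathrm{res}$, where $\mathrm{res}:{\rm op}A^*_G(X)_\Q\to{\rm op}A^*_T(X)_\Q$ is the natural map $m\mapsto m\otimes 1$: reducing this composite modulo $(S^W_\Q)_+$ collapses $\mathrm{res}$ to the quotient map $M\to M/(S^W_\Q)_+M$, so $i^*_G$ modulo $(S^W_\Q)_+$ agrees with $i^*_T$ modulo $\Delta$.

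The commutative-algebra descent and the tensor bookkeeping are routine. The main obstacle — the one step that uses the geometry rather than formal manipulation — is the $T$-level fiber-restriction statement ${\rm op}A^*_T(X)_\Q/\Delta\,{\rm op}A^*_T(X)_\Q\simeq {\rm op}A^*(X)_\Q$, and in particular verifying that the abstract isomorphism produced above is \emph{literally} the restriction-to-the-fiber homomorphism. Here one must lean on the functoriality of the mixed-space construction and the compatibility of the $G$- and $T$-level restrictions, so that the algebraic identity $M/(S^W_\Q)_+M\simeq {\rm op}A^*_T(X)_\Q/\Delta\,{\rm op}A^*_T(X)_\Q$ is realized by $i^*$; this is exactly the point at which the argument mirrors Brion's treatment of \cite[Corollary 6.7.1]{bri:eqchow}.
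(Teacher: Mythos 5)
Your reduction to the structural isomorphism ${\rm op}A^*_T(X)_\Q\simeq {\rm op}A^*_G(X)_\Q\otimes_{S^W_\Q}S_\Q$, and the faithfully-flat-descent argument deducing $S^W_\Q$-freeness of ${\rm op}A^*_G(X)_\Q$ from $S_\Q$-freeness of ${\rm op}A^*_T(X)_\Q$, are correct; this half is a legitimate (and if anything cleaner) variant of what the paper intends, since the paper's entire proof is the instruction to argue as in \cite[Corollary 6.7.1]{bri:eqchow}, where freeness is instead obtained by exhibiting ${\rm op}A^*_G(X)_\Q$ as a $W$-invariant direct summand of ${\rm op}A^*_T(X)_\Q$ and using that $S_\Q$ is $S^W_\Q$-free. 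The bookkeeping identifying ${\rm op}A^*_G(X)_\Q/{S^W_\Q}_+{\rm op}A^*_G(X)_\Q$ with ${\rm op}A^*_T(X)_\Q/\Delta\,{\rm op}A^*_T(X)_\Q$ is also fine.

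The gap is in the final step, which you correctly single out as the only geometric one but then dispose of too quickly. Corollary \ref{usual.op.chow.cor}, also in its rational form, has as hypothesis that the equivariant Chow \emph{homology} $A^T_*(X)_\Q$ is $S_\Q$-free: its proof needs this to commute $\Hom_{S_\Q}(-,S_\Q)$ with reduction modulo $\Delta$. What you have is freeness of the \emph{operational} ring ${\rm op}A^*_T(X)_\Q\simeq\Hom_{S_\Q}(A^T_*(X)_\Q,S_\Q)$ (Theorem \ref{tlinear.kro.thm}), and this does not imply freeness of $A^T_*(X)_\Q$, because dualizing kills torsion: for instance $\Hom_{S_\Q}(S_\Q\oplus S_\Q/(\chi),S_\Q)\simeq S_\Q$ is free while $S_\Q\oplus S_\Q/(\chi)$ is not. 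Nor can this step be recovered by formal manipulation from your hypothesis: the statement you need, namely that ${\rm op}A^*_T(X)_\Q/\Delta\,{\rm op}A^*_T(X)_\Q\to {\rm op}A^*(X)_\Q$ is an isomorphism, is exactly the map that the paper warns (Section 2.1, citing \cite{pk:tor}) can fail to be surjective for complete toric varieties --- and a complete toric variety is a spherical $G$-variety with $G=B=T$ and $W$ trivial, for which the present corollary asserts precisely that isomorphism. In such examples $A^T_*(X)_\Q$ needs more generators than its rank (so it is not free), while its dual module can perfectly well remain free. So as written your proof has a genuine hole: you must either supply an argument that the stated hypothesis forces $A^T_*(X)_\Q$ to be $S_\Q$-free (which fails at the level of general modules), or run the whole argument under the hypothesis that $A^T_*(X)_\Q$ is $S_\Q$-free --- the condition that actually holds in the intended applications ($\Q$-filtrable spherical varieties, per the remark following the corollary), and under which your descent-plus-reduction argument goes through verbatim. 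To be fair, the paper's one-line proof glosses over the same point: in Brion's homology setting the final step is the unconditional isomorphism $A^T_*(X)_\Q/\Delta A^T_*(X)_\Q\simeq A_*(X)_\Q$ of Theorem \ref{Tequiv.thm}, whereas its operational analogue is conditional, which is the whole reason Corollary \ref{usual.op.chow.cor} carries a freeness hypothesis on homology.
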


Corollary 4.12
is satisfied by 
$\Q$-filtrable spherical $G$-varieties \cite{go:rm}.

%%%%%%% REMARK: Observe that a solvable group can act on an irreducible variety 
%%%%%%%%%%%%%%% with an open orbit, but with infinitely many orbits. 
%%%%%%%%%%%%%%% For instance, take the solvable matrix group 
%%%%%%%%%%%%%%%     1 b
%%%%%%%%%%%%%%%     0 a       ,with a\neq 0, 
%%%%%%%%%%%%%%% acting on k^2. 
%%%%%%%%%%%%%%% The whole line spanned by (1,0) is fixed, so there infinitely
%%%%%%%%%%%%%%% many orbits (each point in the line is fixed). 
%%%%%%%%%%%%%%% The open orbit is the orbit of (0,1). 
%%%%%%%%%%%%%%% See more examples in Vinberg's paper on complexity of action of reductive groups.

%\include{note}

%%%%%%%%%%%%%%%%%%%%%%%%%%%%%%%%%%%%%%%%%%%%%%%%%%             %%%%%%%%%%%%%%%%%%%%%%%%%%%%%%%%%%%%%%%%%%%%%%%%%%%%%%%%%%%%%%%
%%%%%%%%%%%%%%%%%%%%%%%%%%%%%%%%%%%%%%%%%%%%%%%%%%   PART II moved  to notion.tex and previous 2.2 version of this paper  %%%%%%%%%%%%%%%%%%%%%%%%%%%%%%%%%%%%%%%%%%%%%%%%%%%%%%%%%%%%%%%
%%%%%%%%%%%%%%%%%%%%%%%%%%%%%%%%%%%%%%%%%%%%%%%%%%             %%%%%%%%%%%%%%%%%%%%%%%%%%%%%%%%%%%%%%%%%%%%%%%%%%%%%%%%%%%%%%%

\section{Further remarks}% and open problems}

\noindent (1) {\em Description of the image of restriction to the fiber 
$i^*:A^*_T(X)\to A^*(X)$ by using equivariant multiplicities}. 
So far, this has been carried out for singular toric varieties \cite{pk:tor}. 
It would be interesting to obtain similar formulas for more general, possibly singular, projective group embeddings. 
%Unlike the case of Chow groups, $i^*$ is in general   
%not surjective and its kernel is not necessarily 
%generated in degree one. See \cite{pk:tor} for an 
%illustration of these claims. 

%\medskip
\smallskip 

\noindent (2) 
{\em Understand the action of $PP^*_T(X)$ on $A^T_*(X)$ 
for $T$-skeletal spherical varieties}, in light of 
Brion's description of the intersection pairing 
between curves and divisors on spherical varieties \cite{bri:cd}. 
This should also provide a geometric interpretation of the 
coefficients arising from the cap and cup product formulas (Corollaries \ref{action.op.chow.on.homology.cor} and \ref{op.cup.prod}). 
This will be 
pursued elsewhere.

\appendix
\setcounter{section}{1}
\section*{Appendix A: Localization theorem and GKM theory for rational 
equivariant Chow cohomology}
Here we translate the 
results of \cite{go:opk} into the language of equivariant Chow cohomology. 
In that paper we studied  
equivariant operational $K$-theory, but as stated in \cite[Section 7]{go:opk} 
the results readily extend to equivariant Chow cohomology with 
rational coefficients. The purpose of this appendix is to supply a detailed proof of  
this claim, merely for the sake of completeness. %This is done here.  
From now on, we assume ${\rm char}(\k)=0$.

%%%%%%%%%%%%%%%   Envelopes and Kimura's work
%\smallskip

%The following was first observed in . 
 \smallskip
 
%Kimura's computation of Chow cohomology implies that ${\rm op}A^*_T(X)$ 
%of a singular 
%variety $X$ injects into ${\rm op}A^*_T(\tilde{X})$ of an 
% equivariant envelope with an explicit cokernel. So if $\tilde{X}$ 
% can be chosen to be nonsingular, then ${\rm op}A^*_T(X)$ injects into the rather geometric ring $A^*_T(\tilde{X})$.    
%More precisely, suppose that 

Let $p:\tilde{X}\to X$ be a $T$-equivariant birational 
envelope which is an 
isomorphism over an open set $U\subset X$. Let $\{Z_i\}$ be the irreducible 
components of $Z=X-U$, and let $E_i=p^{-1}(Z_i)$, with $p_i:E_i\to Z_i$ 
denoting the restriction of $p$. 
The next theorem is Kimura's fundamental result adapted to 
our setup. 
%equivariant operational Chow groups.% due to Kimura. 

\begin{thm}[\protect{\cite[Theorem 3.1]{ki:op}}] \label{kimura.thm}
Let $p:\tilde{X}\to X$ be a $T$-equivariant envelope. 
Then the induced map $p^*:A^*_T(X)\to A^*_T(\tilde{X})$ 
is injective.
Furthermore, if $p$ is birational (and notation is as above), 
then  
the image of $p^*$ is described inductively 
as follows: a class $\tilde{c}\in A^*_T(\tilde{X})$ equals $p^*(c)$, 
for some 
$c\in A^*_T(X)$ if and only if, for all $i$, 
we have $\tilde{c}|_{E_i}=p^*_i(c_i)$ for 
some $c_i\in A^*_T(Z_i)$.  
\hfill $\square$ 
\end{thm}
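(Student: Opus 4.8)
The plan is to transport Kimura's proof of the non-equivariant statement \cite[Theorem 3.1]{ki:op} to the $T$-equivariant setting, checking that each ingredient has an equivariant analogue. The ingredients I will use are: (I) stability of envelopes under base change---if $p\colon\tilde X\to X$ is a $T$-equivariant envelope and $f\colon Y\to X$ is any $T$-morphism, then $Y\times_X\tilde X\to Y$ is again a $T$-equivariant envelope (\cite[Definition 18.3]{f:int}, \cite[Lemma 7.2]{eg:cycles}); (II) surjectivity of proper pushforward for envelopes, $p_*\colon A^T_*(\tilde X)\to A^T_*(X)$, which holds because $A^T_*(X)$ is generated by classes $[W]$ of invariant subvarieties (Theorem \ref{Tequiv.thm}) and each such $W$ is the birational image of an invariant $\tilde W\subset\tilde X$, so that $p_*[\tilde W]=[W]$; (III) the equivariant descent sequence of \cite[Lemma 2]{f:sph}, i.e.\ for any envelope the exactness of $A^T_*(\tilde X\times_X\tilde X)\xrightarrow{\,\pi_{1*}-\pi_{2*}\,}A^T_*(\tilde X)\xrightarrow{\,p_*\,}A^T_*(X)\to 0$; and (IV) the formal calculus of equivariant operational classes (contravariant functoriality, compatibility with proper pushforward, and the projection formula) from \cite[Chapter 17]{f:int} and \cite{eg:eqint}.

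I would first prove injectivity. Suppose $c\in{\rm op}A^*_T(X)$ has $p^*c=0$; to show $c=0$ I must check that $f^*c$ acts trivially on $A^T_*(Y)$ for every $T$-map $f\colon Y\to X$. Forming the fibre square with $\tilde Y=Y\times_X\tilde X$ and projections $q\colon\tilde Y\to Y$, $g\colon\tilde Y\to\tilde X$, ingredient (I) makes $q$ an envelope and (II) makes $q_*$ surjective, so any $z\in A^T_*(Y)$ is $q_*(\tilde z)$ for some $\tilde z$. Since $q^*(f^*c)=g^*(p^*c)=0$, the projection formula gives
\[
f^*c\cap z=f^*c\cap q_*(\tilde z)=q_*\bigl(q^*(f^*c)\cap\tilde z\bigr)=0,
\]
whence $c=0$.

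For the image I would first recast the descent condition: for any envelope, $\tilde c=p^*c$ for some $c\in{\rm op}A^*_T(X)$ \emph{if and only if} $\pi_1^*\tilde c=\pi_2^*\tilde c$ in ${\rm op}A^*_T(\tilde X\times_X\tilde X)$, where $\pi_1,\pi_2$ are the two projections. Necessity is clear since $p\pi_1=p\pi_2$. For sufficiency I define $c$ by $c\cap z:=q_*\bigl(g^*\tilde c\cap\tilde z\bigr)$ for $f\colon Y\to X$ and $z=q_*(\tilde z)$; well-definedness is exactly where the equalizer condition is used: if $q_*(\tilde z)=0$ then by (III) applied to $q$ one has $\tilde z=(q_{1*}-q_{2*})(W)$, and a projection-formula computation rewrites $q_*(g^*\tilde c\cap\tilde z)$ as the pushforward of $h^*(\pi_1^*\tilde c-\pi_2^*\tilde c)\cap W$, which vanishes. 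The compatibilities of \cite[Chapter 17]{f:int} then hold by naturality, and $p^*c=\tilde c$ follows by computing on $A^T_*(\tilde X)$ using the diagonal section of $\tilde X\times_X\tilde X$ as a lift. With this reformulation in hand, the necessity of the stated condition is immediate: if $\tilde c=p^*c$, then the fibre square $E_i\to\tilde X$, $Z_i\to X$ yields $\tilde c|_{E_i}=p_i^*(c|_{Z_i})$, so $c_i=c|_{Z_i}$ works.

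The remaining, and main, task is the converse: assuming $\tilde c|_{E_i}=p_i^*(c_i)$ for all $i$, produce $c$ with $p^*c=\tilde c$. By the reformulation this amounts to verifying the equalizer condition $\pi_1^*\tilde c=\pi_2^*\tilde c$. Writing $\rho=p\pi_1=p\pi_2$, I would observe that the difference $\beta:=\pi_1^*\tilde c-\pi_2^*\tilde c$ restricts to zero over the isomorphism locus $U$ (where $\pi_1=\pi_2$) and, using the hypothesis $\tilde c|_{E_i}=p_i^*(c_i)$, also restricts to zero on $\rho^{-1}(Z_i)=E_i\times_{Z_i}E_i$ for each $i$ (both pullbacks of $\tilde c$ factor through the common structure map $E_i\times_{Z_i}E_i\to Z_i$). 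The obstacle is to conclude $\beta=0$ from the vanishing of these restrictions; this I expect to be the hard part, and I would handle it by Noetherian induction on $\dim Z$, peeling off the components $Z_i$ one at a time and invoking the inductive form of the theorem for the strictly lower-dimensional envelopes $E_i\to Z_i$ together with the descent sequence (III). All other verifications---functoriality of $p^*$, the envelope and pushforward properties, and the projection-formula manipulations---are direct equivariant transcriptions of their non-equivariant counterparts and present no new difficulty.
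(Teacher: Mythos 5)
First, a framing remark: the paper does not actually prove this theorem --- it is quoted from Kimura \cite[Theorem 3.1]{ki:op} with the (implicit) assertion that his argument carries over to the equivariant setting --- so your proposal is measured against Kimura's original argument, which it is essentially trying to transcribe. Your ingredients (I)--(IV), the injectivity argument via base change, surjectivity of $q_*$ and compatibility with proper pushforward, the equalizer reformulation (${\tilde c}\in{\rm Im}(p^*)$ iff $\pi_1^*\tilde c=\pi_2^*\tilde c$ on $\tilde X\times_X\tilde X$, proved via the descent sequence (III) and the graph section), and the necessity direction are all correct equivariant transcriptions of that argument.

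The genuine gap is exactly where you flag it: concluding $\beta:=\pi_1^*\tilde c-\pi_2^*\tilde c=0$, and your proposed fix (``Noetherian induction on $\dim Z$, invoking the inductive form of the theorem for the envelopes $E_i\to Z_i$'') cannot work as stated. First, the image-description half of the theorem applies only to \emph{birational} envelopes, and $p_i:E_i\to Z_i$ is in general not birational (normalize a nodal curve: $Z_i$ is the node, $E_i$ two points), so for $E_i\to Z_i$ you only have injectivity of $p_i^*$, which gives uniqueness of $c_i$ and nothing more. Second, no dimension induction supplies the mechanism by which vanishing of restrictions of $\beta$ to closed pieces forces $\beta=0$: operational classes have no sheaf-like behavior. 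What does the job --- and is the heart of Kimura's proof --- is the geometric dichotomy forced by birationality of $p$. Choose $U$ to be $T$-invariant (so each $Z_i$, $E_i$ is invariant, $T$ being connected); since $p$ is an isomorphism over $U$, the map $\rho:\tilde X\times_X\tilde X\to X$ satisfies $\rho^{-1}(U)=\Delta\bigl(p^{-1}(U)\bigr)$. Hence for any $T$-map $\varphi:W\to\tilde X\times_X\tilde X$ and any invariant subvariety $V\subset W$, the closure of $\varphi(V)$ is irreducible and either meets $\rho^{-1}(U)$, in which case it is contained in the closed diagonal $\Delta(\tilde X)$, or it lies in $\rho^{-1}(Z)=\bigcup_i E_i\times_{Z_i}E_i$, hence in a single $E_i\times_{Z_i}E_i$. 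Now $\beta$ restricts to zero on $\Delta(\tilde X)$ (because $\Delta^*\pi_1^*=\Delta^*\pi_2^*={\rm id}$ --- note that vanishing on the \emph{closed} diagonal, not merely ``over $U$'' as in your wording, is the usable statement) and to zero on each $E_i\times_{Z_i}E_i$ (your computation via the common map to $Z_i$). Since $A^T_*(W)$ is generated by classes of invariant subvarieties (Theorem \ref{Tequiv.thm}) and $\beta$ commutes with proper pushforward, $\varphi^*\beta$ kills every class, i.e. $\beta=0$. Equivalently and more slickly: the dichotomy says precisely that $\Delta(\tilde X)\sqcup\bigsqcup_i\bigl(E_i\times_{Z_i}E_i\bigr)\to\tilde X\times_X\tilde X$ is itself a $T$-equivariant envelope, so the injectivity you have already proven, applied to this envelope, gives $\beta=0$. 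With this lemma inserted in place of the proposed induction, the rest of your outline goes through.
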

Since $E_i$ and $Z_i$ have smaller dimension than $X$, we can apply 
this result to
compute $A^*_T(X)$ using a resolution of singularities %(if e.g. ${\rm char}(\k)=0$) 
and induction on dimension.  
In fact, if $\tilde{X}$ is chosen to be smooth, then $A^*_T(\tilde{X})$ 
corresponds to the Chow group of $\tilde{X}$ 
graded by codimension \cite[Proposition 4]{eg:eqint};     
thus $A^*_T(X)\subset A^*_T(\tilde{X})$ sits inside a more geometric object. 
Theorem \ref{kimura.thm} is one of the reasons why Kimura's results \cite{ki:op} make operational Chow groups 
more computable.  %, as we shall see below. 
%(cf. \cite{go:opk}).  

\medskip

%\smallskip

%Now let $G=T$ be an algebraic torus.
In Proposition \ref{inj.fix.set.lem}, 
we state another crucial consequence of Kimura's work. %of equivariant operational Chow groups. 
Put in perspective, it asserts that the rational 
equivariant operational Chow ring $A^*_T(X)_\Q$ of 
{\em any} complete $T$-scheme $X$ is a subring of $A^*_T(X^T)_\Q$. 
Moreover, %for the latter  
there is a natural isomorphism %(over $\Z$)  
$A^*_T(X^T)\simeq A^*(X^T)\otimes_{\Z} S.$     
Indeed, for a fixed degree $j$, \cite[Theorem 2]{eg:eqint}  
yields 
the identifications 
$A^j_T(X^T)\simeq A^j((X^T\times U)/T)\simeq A^j(X^T\times (U/T)),$
where $U$ is an open $T$-invariant subset of a $T$-module $V$, so that 
the quotient $U\to U/T$ exists and is a principal $T$-bundle, and %that 
the codimension of $V\setminus U$ is large enough. %(Theorem \ref{equiv.oper.and.op.equiv.rem}). 
Additionally, %Even more so, 
%as in Example \ref{t.borel.construction.ex}, 
we can find $U$ such that 
$U/T$ is a product of projective spaces (see e.g. \cite{eg:eqint}). It follows that 
$A^*(X^T\times U/T)\simeq A^*(X^T)\otimes A^*(U/T)$, 
by the projective bundle formula \cite[Example 17.5.1 (b)]{f:int}. 
% % 
% %$A^*(X^T)\otimes_{\Q} S$. 
% 
In many cases of interest, $X^T$ is finite (e.g. for spherical varieties) 
and so one has 
$A^*_T(X)_\Q\subseteq \bigoplus^\ell_1 A^*_T(X^T)_\Q=S_{\Q}^{\ell}$, where $\ell=|X^T|$.  
This motivated our introduction of localization techniques, and ultimately 
GKM theory, into the study of rational equivariant operational Chow rings 
and integral equivariant operational $K$-theory \cite{go:opk}.

\begin{prop}\label{inj.fix.set.lem}
Let $X$ be a %projective %(filtrable or projective) 
$T$-scheme. If $X$ is complete, then the pull-back 
$i^*_T:A^*_T(X)_\Q\to A^*_T(X^T)_\Q$ 
is injective.  
\end{prop}

\begin{proof}
The argument is essentially 
that of \cite[Proposition 3.7]{go:opk}. 
We include it  for convenience. 
Choose 
a $T$-equivariant envelope 
$p:\tilde{X}\to X$, with $\tilde{X}$ smooth. %(since ${\rm char}(\k)=0$).   
It follows that 
$p^*:A^*_T(X)\to A^*_T(\tilde{X})$ 
is injective (Theorem \ref{kimura.thm}). 
Since 
$\tilde{X}$ is smooth, 
%the map
%by inclusion of fixed points $\tilde{i}_T:\tilde{X}^T\to \tilde{X}$, 
%namely  
$\tilde{i}^*_T:A^*_T(\tilde{X})_\Q\to A^*_T({\tilde{X}}^T)_\Q$ 
is injective (%by \cite[Proposition 4]{eg:eqint} 
%together with 
by Theorem \ref{smth.comp.free.rem}). 
Now the chain of inclusions 
$\tilde{X}^{T}\subset p^{-1}(X^T)\subset \tilde{X}$  
indicate that $\tilde{i}^*_T$ factors through 
$\iota^*:A^*_T(\tilde{X})\to A^*_T(p^{-1}(X^T))$, 
where 
$\iota:p^{-1}(X^T)\hookrightarrow \tilde{X}$ 
is the natural inclusion. 
Thus, $\iota^*$
is injective over $\Q$ as well. % homomorphism %, namely 
Finally, adding this information to the commutative diagram below
$$
\xymatrix{
A^*_{T}(X)_\Q\ar[r]^{p^*} \ar[d]^{i_T^*}& A^*_{T}(\tilde{X})_\Q\ar[d]^{\iota^*}\\
A^*_{T}(X^T)_\Q \ar[r]^{p^*}& A^*_{T}(p^{-1}(X^T))_\Q. 
}
$$
renders $i^*_T:A^*_T(X)_\Q\to A^*_T(X^T)_\Q$ injective. 
\end{proof}

\begin{cor}[\protect{\cite[Corollary 3.8]{go:opk}}]\label{injectivity.cor}
Let $X$ be a complete $T$-scheme. Let $Y$ be a $T$-invariant 
closed subscheme containing $X^T$. Let $\iota:Y\to X$ be the 
natural inclusion. 
Then the pullback $\iota^*:A^*_T(X)_\Q\to A^*_T(Y)_\Q$ is injective. 
In particular, if $H$ is a closed subgroup of $T$,  
then 
$i_{H}^*:A^*_T(X)_\Q\to A^*_T(X^H)_\Q$ is injective.
%The statement holds over $\Z$ provided 
%$X$ satisfies the additional assumption 
%of Proposition \ref{inj.fix.set.lem}. 
\hfill $\square$
\end{cor}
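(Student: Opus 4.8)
The plan is to exhibit $\iota^*$ as a right-hand factor of the map $i^*_T$ that is already known to be injective by Proposition \ref{inj.fix.set.lem}. The first thing I would record is that the two fixed loci coincide: since $X^T\subseteq Y\subseteq X$, applying the fixed-point functor gives $X^T\subseteq Y^T$, while $Y^T=Y\cap X^T\subseteq X^T$ because $Y$ is a subscheme of $X$. Hence $Y^T=X^T$. This identification is what makes the argument go through, since it ensures the target of the factored map is literally the same object ${\rm op}A^*_T(X^T)_\Q$ appearing in Proposition \ref{inj.fix.set.lem}.

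With this in hand, I would observe that the inclusion $i_T:X^T\to X$ of the fixed point subscheme factors through $Y$ as
$$
X^T=Y^T\xrightarrow{\;j\;}Y\xrightarrow{\;\iota\;}X,
$$
where $j$ denotes the inclusion of $Y^T$ into $Y$. By the contravariant functoriality of equivariant operational Chow rings (property (ii) of the introduction), this produces the factorization $i^*_T=j^*\circ\iota^*$. Since $X$ is complete, Proposition \ref{inj.fix.set.lem} tells us $i^*_T$ is injective; but a composite $j^*\circ\iota^*$ can be injective only if its right-hand factor $\iota^*$ is injective. This settles the first assertion, and no further computation is required.

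For the final claim I would take $Y=X^H$ with $H\subset T$ a closed subgroup. As $T$ is abelian, $H$ is normal, so $X^H$ is a $T$-invariant closed subscheme of $X$; moreover $X^T\subseteq X^H$, since a point fixed by all of $T$ is in particular fixed by $H$. Thus $Y=X^H$ meets the hypotheses of the first part, and $i^*_H$ is injective over $\Q$.

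As for difficulty: I do not expect any real obstacle here, the entire content being the factorization $i^*_T=j^*\circ\iota^*$ together with the completeness of $X$ needed to invoke Proposition \ref{inj.fix.set.lem}. The only point that must not be glossed over is the identity $Y^T=X^T$; without the hypothesis $X^T\subseteq Y$ this would fail, and the factorization would land in the wrong group.
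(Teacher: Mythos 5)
Your proof is correct and is essentially the argument the paper intends: the paper omits the proof (citing \cite[Corollary 3.8]{go:opk}), and that proof is precisely this factorization $i^*_T=j^*\circ\iota^*$ through $Y$, combined with the injectivity of $i^*_T$ from Proposition \ref{inj.fix.set.lem} for complete $X$. Your additional observation that $Y^T=X^T$ is harmless but not strictly needed; all that is required is that $X^T\subseteq Y$ so that $i_T$ factors through $\iota$, and your handling of the case $Y=X^H$ (normality of $H$ in the abelian group $T$, hence $T$-invariance of $X^H$) is exactly right.
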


\begin{rem}\label{normalization.irr.comp.rem}
Let $Y$ be a complete $T$-scheme with irreducible components $Y_1,\ldots, Y_n$. 
Let $Y_{ij}=Y_i\cap Y_j$. 
By Theorem \ref{kimura.thm} %and \cite[Corollary 3.4]{go:opk} 
the following sequence is exact 
$$
0\to A^*_T(Y)_\Q\to \bigoplus_i A^*_T(Y_i)_\Q\to \bigoplus_{i,j}A^*_T(Y_{ij})_\Q.
$$ 
%When $Y^T$ is finite, the sequence above
This sequence yields the 
commutative diagram \cite[Corollary 3.6]{go:opk}:
$$
\xymatrix{
0\ar[r]& A^*_T(Y)_\Q\ar[r]\ar[d]^{i_{T,Y}^*}& \bigoplus_i A^*_T(Y_i)_\Q\ar[r]\ar[d]^{\oplus_i i_{T,Y_i}^*}& \bigoplus_{i,j}A^*_T(Y_{ij})_\Q\ar[d]^{i_{T,Y_{i,j}}^*} \\
0\ar[r]& A^*_T(Y^T)_\Q\ar[r]^{p}& \bigoplus_i A^*_T(Y^T_i)_\Q\ar[r]^{q}& \bigoplus_{i,j}A^*_T(Y^T_{ij})_\Q
}
$$
Since all vertical maps are injective 
(Proposition \ref{inj.fix.set.lem}),  
we can describe the image of the first 
vertical map in terms of the image of the second vertical map 
and the kernel of $q$. Indeed,  
$p({\rm Im}(i^*_{T,Y}))\simeq {\rm Im} (\oplus_i i^*_{T,Y_i})\cap {\rm Ker}(q).$  
Moreover, if $Y^T$ is finite, 
then the kernel of $q$ consists of all families 
$(f_i)_{i=1}^n$ such that 
$f_i(x)=f_j(x)$ %(equality of $k$-components), 
whenever $x\in Y_{ij}^T$. 
\end{rem}

\smallskip

Back to the general case, let $X$ be a complete $T$-scheme.  
One wishes to describe the image of the injective 
map $i^*_T:A^*_T(X)_\Q\to A^T_*(X^T)_\Q.$ 
For this, let $H$ 
be a subtorus of $T$ of codimension one. 
Since   
$i_T$ %X^T\to X$  
factors as $i_{T,H}:X^T\to X^{H}$ 
followed 
by $i_{H}:X^{H}\to X$,  
%That is, 
the image of $i^*_T$ is contained in the image 
of $i^*_{T,H}$. Hence,  
${\rm Im}(i^*_T)%:{\rm op}A^*_T(X)_\Q\to {\rm op}A^*_T(X^T)_\Q]
\subseteqq 
\bigcap_{H\subset T} {\rm Im}(
i^*_{T,H})
%:{\rm op}A^*_T(X^{H})_\Q\to {\rm op}A^*_T(X^T)_\Q
,$
where the intersection runs over all codimension-one subtori $H$ of $T$. %(the same holds over $\Q$). 
This observation yields a complete description 
of the image of $i_T^*$ over $\Q$.  
Before stating it, we recall a definition from  
\cite[Section 4]{go:opk}. 

\begin{dfn}
Let $X$ be a complete $T$-scheme. We say that $X$ has the 
{\em Chang-Skjelbred property} (or {\em CS property}, for short) 
if the pullback  
$i^*_T:A^*_T(X)\to A^*_T(X^T)$  
is injective, and its image 
is exactly the intersection 
of the images of 
$
i^*_{T,H}:A^*_T(X^H)\to A^*_T(X^T),
$
where $H$ runs over all subtori of codimension one in $T$. 
When the defining conditions hold over $\Q$ rather than $\Z$, 
we say that $X$ has the {\em rational CS property}.  
\end{dfn}

Some obstructions for the CS property to hold 
are e.g. (i) $A_*(X^T)$ could have $\Z$-torsion,   
(ii) $\dim{T}\geq 2$ and there exist a $T$-orbit on $X$ 
whose stabilizer is not connected, for instance, if $X$ is nonsingular, $T$-skeletal 
(Definition \ref{genericaction.def})
and the weights of the $T$-invariant curves are not primitive.  

\smallskip

By Theorem \ref{smth.comp.free.rem}, %\cite[Theorem 3.3]{bri:eqchow}, 
every nonsingular 
complete $T$-scheme has the rational CS property. We extend 
this result to include all possibly singular 
complete $T$-schemes. See 
\cite[Theorem 4.4]{go:opk} for the corresponding 
statement in equivariant operational $K$-theory 
with integral coefficients.   

\begin{thm}\label{cs.thm}
If $X$ is a complete $T$-scheme, then it  
%Then $X$ 
has the rational CS property.  
\end{thm}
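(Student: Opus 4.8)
The plan is to argue by induction on $\dim X$, since injectivity of $i^*_T$ is already granted by Proposition \ref{inj.fix.set.lem} and the inclusion ${\rm Im}(i^*_T)\subseteq\bigcap_H{\rm Im}(i^*_{T,H})$ is formal; only the reverse inclusion needs proof. The base case $\dim X=0$ is trivial, as then $X=X^T$. For the inductive step I would first use \cite[Theorem 2]{su:eq} to choose a $T$-equivariant birational envelope $p:\tilde X\to X$ with $\tilde X$ nonsingular (this is where ${\rm char}(\k)=0$ enters); since $X$ is complete and $p$ is proper, $\tilde X$ is complete and quasiprojective, hence projective. Given $u\in\bigcap_H{\rm Im}(i^*_{T,H})_\Q$, the induced maps $p_H:\tilde X^H\to X^H$ are again envelopes by \cite[Lemma 7.2]{eg:cycles}, so writing $u=i^*_{T,H}(v_H)$ and chasing the square relating the fixed loci of $X$ and $\tilde X$ shows $p^*_T(u)\in{\rm Im}(\widetilde{i^*_{T,H}})$ for every codimension-one subtorus $H$. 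As $\tilde X$ is smooth and projective it has the rational CS property by \cite[Theorem 3.3]{bri:eqchow}, whence $p^*_T(u)=\widetilde{i^*_T}(y)$ for some $y\in{\rm op}A^*_T(\tilde X)_\Q$.

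The heart of the argument, and the step I expect to be the main obstacle, is to descend $y$ along $p$, i.e. to show $y\in{\rm Im}(p^*)$. For this I would apply Kimura's criterion (Theorem \ref{kimura.thm}): writing $Z=X\setminus U$ with $U$ the locus where $p$ is an isomorphism, $Z_i$ its irreducible components, $E_i=p^{-1}(Z_i)$ and $p_i:E_i\to Z_i$ the restriction, it suffices to check $y|_{E_i}\in{\rm Im}(p^*_i)$ for each $i$, and one can arrange the construction so that $\dim Z_i,\dim E_i<\dim X$. The key observation is that the restriction $\zeta_i:=u|_{Z_i^T}$ inherits the CS congruences on $Z_i$: restricting the witnesses $v_H$ to $Z_i^H$ shows that $\zeta_i$ lies in the intersection, over all codimension-one $H$, of the images of the corresponding maps ${\rm op}A^*_T(Z_i^H)_\Q\to{\rm op}A^*_T(Z_i^T)_\Q$. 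Since $\dim Z_i<\dim X$, the inductive hypothesis (the rational CS property for $Z_i$) then yields $w_i\in{\rm op}A^*_T(Z_i)_\Q$ with $w_i|_{Z_i^T}=\zeta_i$.

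It remains to verify $y|_{E_i}=p^*_i(w_i)$. Both sides restrict on $E_i^T$ to the pullback of $\zeta_i$ along $E_i^T\to Z_i^T$ — the left-hand side because $y|_{\tilde X^T}=p^*_T(u)$ and $E_i^T\subseteq\tilde X^T$, the right-hand side by the defining property $w_i|_{Z_i^T}=\zeta_i$ — so they agree by injectivity of restriction to $E_i^T$ (Proposition \ref{inj.fix.set.lem} applied to the complete scheme $E_i$). This confirms Kimura's criterion and produces $c\in{\rm op}A^*_T(X)_\Q$ with $p^*(c)=y$. Finally, commutativity of pullbacks gives $p^*_T(i^*_T(c))=\widetilde{i^*_T}(y)=p^*_T(u)$, and since $p_T$ is an envelope the map $p^*_T$ is injective (Theorem \ref{kimura.thm}), so $i^*_T(c)=u$. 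Thus $u\in{\rm Im}(i^*_T)$, closing the induction. The only genuinely delicate points are arranging $\dim Z_i,\dim E_i<\dim X$ and keeping the various fixed-point restriction maps compatible across the envelopes, both of which become routine once the comparison diagram is set up.
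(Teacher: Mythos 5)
Your argument is correct and is essentially the paper's own proof: the paper simply points to the argument of \cite[Theorem 4.4]{go:opk}, which proceeds exactly as you do --- induction on dimension, passage to a smooth projective equivariant birational envelope (using \cite[Theorem 2]{su:eq} in characteristic zero), Brion's rational CS property for the smooth model \cite[Theorem 3.3]{bri:eqchow}, descent along $p$ via Kimura's criterion (Theorem \ref{kimura.thm}) applied to the lower-dimensional $Z_i$, $E_i$, with compatibility of fixed loci from \cite[Lemma 7.2]{eg:cycles} and all identifications enforced by injectivity of restriction to fixed points (Proposition \ref{inj.fix.set.lem}). Your write-up just makes explicit the diagram chases that the paper leaves to the cited reference.
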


\begin{proof}
Simply argue as in \cite[Theorem 4.4]{go:opk}, using \cite[Lemma 7.2]{eg:cycles}, Theorem \ref{kimura.thm} 
and Proposition \ref{inj.fix.set.lem}.  
\end{proof}

Before stating our version of GKM theory,
let us recall
a few definitions from \cite{gkm:eqc}, \cite{go:cells} and \cite{go:opk}.

\begin{dfn} \label{genericaction.def} 
Let $X$ be a complete $T$-variety. 
Let $\mu:T\times X\to X$ be the action map.
We say that $\mu$ is a {$T$\em-skeletal action} if 
%\begin{enumerate}
 %\item 
 the number of $T$-fixed points and 
 one-dimensional $T$-orbits in $X$ is finite.
%\end{enumerate}
In this context, $X$ is called a {$T$\em-skeletal variety}. 
The associated graph of fixed points and invariant curves is called 
the {\em GKM graph} of $X$. We shall 
denote this graph by $\Gamma(X)$. 
\end{dfn}

Notice that, in principle, Definition \ref{genericaction.def} 
allows for $T$-invariant irreducible curves with exactly one 
fixed point (i.e. the GKM graph 
$\Gamma(X)$ may have simple loops). 
In \cite[Proposition 5.3]{go:opk} we show that the 
functor $A^*_T(-)_\Q$ 
``contracts'' such loops %of $\Gamma(X)$ 
to a point. The proof there is given in the context 
of operational $K$-theory, but it easily extends 
to our current setup. %of operational Chow groups.  

\begin{prop}[\protect{\cite[Proposition 5.3]{go:opk}}]\label{1.2.fix.point.curves.prop}
Let $X$ be a complete $T$-variety and let $C$ 
be a $T$-invariant irreducible curve of $X$ which is not 
fixed pointwise by $T$. Then the image of the injective map 
$i^*_T:A^*_T(C)_\Q\to A^*_T(C^T)_\Q$ is described as follows: 
\begin{enumerate}[(i)]
 \item If $C$ has only one 
fixed point, say $x$, then $i^*_T:A^*_T(C)_\Q\to A^*_T(x)_\Q$ is 
an isomorphism; that is, $A^*_T(C)_\Q\simeq S_\Q$. 
\smallskip

\item If $C$ has two fixed points, then  
$$A^*_T(C)_\Q\simeq 
\{(f_0,f_\infty) \in S_\Q\oplus S_\Q\,|\, 
f_0\cong f_\infty \mod \chi\},$$ 
where $T$ acts on $C$ via the character $\chi$. \hfill $\square$
\end{enumerate}
\end{prop}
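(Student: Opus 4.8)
The plan is to reduce the whole computation to the smooth curve $\P^1$ by passing to the normalization and then invoking Kimura's envelope theorem (Theorem \ref{kimura.thm}), whose image description is exactly tailored to this situation.

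First I would identify the normalization. Since $C$ is an irreducible complete curve carrying a nontrivial $T$-action, its normalization $\nu\colon \tilde C\to C$ is a smooth complete irreducible curve on which $T$ acts nontrivially; the only such curve is $\tilde C\cong\P^1$, having exactly two fixed points $0,\infty$ with tangent weights $\pm\chi$. The map $\nu$ is proper and birational and restricts to an isomorphism over the dense orbit $U\cong\G$ (the smooth locus of $C$ contains $U$); moreover it is a $T$-\emph{equivariant} birational envelope, since the only invariant subvarieties of $C$ are $C$ itself and its fixed points, and these admit $T$-invariant birational lifts in $\P^1$. Writing $Z:=C\setminus U=C^T$ and $E:=\nu^{-1}(Z)\subseteq\{0,\infty\}$, the two cases of the statement correspond precisely to whether $\nu$ identifies $0$ with $\infty$ (one fixed point) or keeps them distinct (two fixed points). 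Throughout I would use that $i^*_T$ is injective by Proposition \ref{inj.fix.set.lem}, and that for the smooth $T$-cellular curve $\P^1$ the image of $\tilde i^*_T\colon{\rm op}A^*_T(\P^1)_\Q\to {\rm op}A^*_T((\P^1)^T)_\Q=S_\Q\oplus S_\Q$ is the GKM set $\{(f_0,f_\infty): f_0\equiv f_\infty\mod\chi\}$, coming from the standard presentation of $A^*_T(\P^1)$.

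Next I would feed this into Theorem \ref{kimura.thm}. In \emph{case (ii)}, $\nu$ is a bijection on fixed points, so $Z$ has two components $\{x_0\},\{x_\infty\}$, each $E_i$ is a single point, and each $p_i\colon E_i\to Z_i$ is an isomorphism; hence Kimura's condition is vacuous and $\nu^*\colon {\rm op}A^*_T(C)\to {\rm op}A^*_T(\P^1)$ is an isomorphism. Chasing the square relating $\nu^*$ to fixed-point restriction (which commutes by contravariant functoriality, using that $\nu$ carries $(\P^1)^T$ bijectively to $C^T$) then identifies the image of $i^*_T$ for $C$ with that for $\P^1$, namely the GKM set $\{f_0\equiv f_\infty\mod\chi\}$, proving (ii). In \emph{case (i)}, $Z=\{x\}$ is a single point while $E=\{0,\infty\}$, so $p\colon E\to Z$ collapses two points to one and the pullback $p^*\colon {\rm op}A^*_T(x)=S\to {\rm op}A^*_T(E)=S\oplus S$ is the diagonal. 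Kimura's criterion then says that $\tilde c\in {\rm op}A^*_T(\P^1)$ lies in the image of $\nu^*$ if and only if its restriction to $E=(\P^1)^T$, i.e. the pair $(f_0,f_\infty)$, is diagonal, that is $f_0=f_\infty$. Since the GKM congruence is automatic under equality, the image of $\nu^*$ is the rank-one free summand $S_\Q\cdot 1$; chasing the same commutative square shows $i^*_T\colon {\rm op}A^*_T(C)_\Q\to {\rm op}A^*_T(x)_\Q=S_\Q$ sends such a class to $f_0$ and is therefore an isomorphism, giving (i).

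The hard part is case (i): one must extract from Kimura's criterion the \emph{equality} $f_0=f_\infty$, which is strictly stronger than the GKM congruence and is precisely what collapses the rank-two module $A^*_T(\P^1)_\Q$ down to $S_\Q$. Getting this right hinges on correctly describing the collapsing pullback $p^*$ as the diagonal $S\to S\oplus S$; once that is in hand, the remainder is routine diagram-chasing with the fixed-point restriction square and the known image for $\P^1$. (As a sanity check, note that $A^*_T(\P^1)_\Q=S_\Q\{1,h\}$ with $h$ restricting to a nondiagonal pair, so indeed the diagonal locus is exactly $S_\Q\cdot 1$.)
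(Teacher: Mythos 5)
Your proposal is correct and takes essentially the same route as the paper's proof: the paper defers to \cite[Proposition 5.3]{go:opk}, whose argument (translated from operational $K$-theory) is precisely to pass to the normalization $\nu:\P^1\to C$, observe it is a $T$-equivariant birational envelope, and combine Kimura's theorem (Theorem \ref{kimura.thm}) with the GKM description of ${\rm op}A^*_T(\P^1)_\Q$, the one-fixed-point case being handled exactly by your observation that the collapsing map $\{0,\infty\}\to\{x\}$ turns Kimura's criterion into the diagonal condition $f_0=f_\infty$. Your diagram chases and the identification of $p^*$ as the diagonal $S_\Q\to S_\Q\oplus S_\Q$ are sound, so nothing is missing.
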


Let $X$ be a complete $T$-skeletal variety. 
It is possible to define a ring $PP_T^*(X)_\Q$ 
of (rational) {\em piecewise polynomial functions}. 
Indeed, let $A^*_T(X^T)_\Q=\bigoplus_{x\in X^T}S_\Q$. 
%where $R_x$ is a copy of the polynomial algebra $S_\Q$. 
We then define
$PP_T^*(X)_\Q$ as the subalgebra of $A^*_T(X^T)_\Q$ defined by
\[
PP_T^*(X)_\Q= \{(f_1,...,f_m)\in\bigoplus_{x\in X^T}S_\Q\;|\; f_i\equiv f_j\;mod(\chi_{i,j})\}
\]
where $x_i$ and $x_j$ are the two (perhaps equal) fixed points 
in the closure of the one-dimensional
$T$-orbit $\mathcal{C}_{i,j}$, and $\chi_{i,j}$ 
is the character of $T$ associated with $\mathcal{C}_{i,j}$.
The character $\chi_{i,j}$ is uniquely determined up to sign (permuting the two
fixed points changes $\chi_{i,j}$ to its opposite). Invariant 
curves with only one fixed point do not impose any relation, 
and this is compatible with Proposition \ref{1.2.fix.point.curves.prop}. 
Now we are ready to state our version of GKM theory. 

\begin{thm}[\protect{\cite[Theorem 5.4]{go:opk}}]\label{gkm.thm}
Let $X$ be a complete $T$-skeletal variety. 
Then the pullback  
%$i^*_T:{\rm op}A^*_T(X)_\Q\longrightarrow {\rm op}A^*_T(X^T)_\Q=\bigoplus_{x_i\in X^T}S_\Q$  
$i_T^*:A^*_T(X)_\Q\to A^*_T(X^T)_\Q$ 
induces an algebra isomorphism between $A^*_T(X)_\Q$ and $PP_T^*(X)_\Q$. \hfill $\square$ 
\end{thm}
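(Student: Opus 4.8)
The plan is to combine the rational Chang--Skjelbred property (Theorem \ref{cs.thm}) with the explicit description of the operational Chow cohomology of invariant curves (Proposition \ref{1.2.fix.point.curves.prop}), reducing the computation of ${\rm Im}(i^*_T)$ to the one-skeleton of $X$, treated one codimension-one subtorus at a time. Injectivity of $i^*_T$ is already guaranteed by Proposition \ref{inj.fix.set.lem}, and since $i^*_T$ is a pullback it is a ring homomorphism; identifying the target ${\rm op}A^*_T(X^T)_\Q=\bigoplus_{x\in X^T}S_\Q$ as a ring (a finite product of copies of $S_\Q$), it therefore suffices to prove that the image of $i^*_T$ is exactly the subring $PP_T^*(X)_\Q$, characterized purely by the congruences attached to the one-dimensional orbits.

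First I would invoke Theorem \ref{cs.thm}: as $X$ is a complete $T$-scheme it has the rational CS property, so ${\rm Im}(i^*_T)=\bigcap_H {\rm Im}(i^*_{T,H})$, the intersection running over all codimension-one subtori $H\subset T$. The next step is to compute each ${\rm Im}(i^*_{T,H})$ separately. Here I would first observe that $X^H$ is at most one-dimensional: if $p\in X^H$ then its stabilizer $T_p$ contains $H$, whence $\dim(T\cdot p)\leq \dim T-\dim H=1$; since $X$ is $T$-skeletal this forces $X^H$ to be a finite union of $T$-fixed points and closures of one-dimensional $T$-orbits, and in particular $(X^H)^T=X^T$ is finite.

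Given this, I would apply Remark \ref{normalization.irr.comp.rem} with $Y=X^H$: its irreducible components are the fixed points and the invariant curves $C\subset X^H$, their pairwise intersections $Y_{ij}$ are zero-dimensional unions of fixed points, and the resulting Mayer--Vietoris sequence expresses ${\rm op}A^*_T(X^H)_\Q$, hence ${\rm Im}(i^*_{T,H})$, in terms of the images for each component. Feeding in Proposition \ref{1.2.fix.point.curves.prop} for each curve --- a single congruence $f_{x_i}\equiv f_{x_j}\bmod\chi_C$ when $C$ has two fixed points $x_i,x_j$ and weight $\chi_C$, and no condition when $C$ is a loop --- and noting that the overlap conditions prescribed by the map $q$ are mere equalities of coordinates of a single tuple (hence automatic), I would conclude that ${\rm Im}(i^*_{T,H})$ consists precisely of the families $(f_x)_{x\in X^T}$ satisfying the congruences $f_{x_i}\equiv f_{x_j}\bmod\chi_C$ for every two-pointed invariant curve $C\subseteq X^H$.

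To finish, I would note that a one-dimensional orbit $C$ of weight $\chi_C$ is fixed pointwise by $H$ if and only if $\chi_C|_H$ is trivial, that is, if and only if $H$ equals the codimension-one subtorus $H_C=(\ker\chi_C)^\circ$; thus each curve contributes its congruence to exactly one $H$, while for the remaining $H$ one has $X^H=X^T$ and no condition. Intersecting over all codimension-one $H$ therefore imposes simultaneously all the congruences indexing $PP_T^*(X)_\Q$ and nothing more, giving ${\rm Im}(i^*_T)=PP_T^*(X)_\Q$; combined with injectivity, $i^*_T$ is the asserted algebra isomorphism. The main obstacle I anticipate is the step computing ${\rm Im}(i^*_{T,H})$: one must control the possibly singular and reducible one-dimensional scheme $X^H$ and verify that the Mayer--Vietoris description of Remark \ref{normalization.irr.comp.rem}, together with Proposition \ref{1.2.fix.point.curves.prop}, yields exactly the curve congruences, with no spurious relations arising from components sharing fixed points or from invariant curves with a single fixed point.
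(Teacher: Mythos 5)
Your proposal is correct and is essentially the paper's own argument: the paper defers the proof to \cite[Theorem 5.4]{go:opk}, and the proof there (which the appendix of this paper is set up to support) is exactly your reduction — the rational CS property (Theorem \ref{cs.thm}) restricts attention to the loci $X^H$ for codimension-one subtori $H$, which for a $T$-skeletal variety are at most one-dimensional unions of fixed points and invariant curves, and these are handled by Remark \ref{normalization.irr.comp.rem} together with Proposition \ref{1.2.fix.point.curves.prop}, each curve contributing its congruence to the unique $H=(\ker\chi_C)^\circ$ fixing it pointwise. Nothing further is needed.
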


{\footnotesize

}

%%--------------------Here the manuscript ends--------------------------------

\end{document}